\documentclass[10pt,reqno,a4paper]{amsart}

\usepackage{amsmath,amssymb,amsthm,mathtools}
\usepackage{bm}
\usepackage{mathrsfs}
\usepackage{enumitem}
\usepackage{microtype}
\usepackage{a4wide}
\usepackage{indentfirst}
\usepackage[numbers,sort&compress]{natbib}
\usepackage[colorlinks=true,linkcolor=blue,citecolor=blue,urlcolor=blue]{hyperref}

\numberwithin{equation}{section}
\newtheorem{theorem}{Theorem}[section]
\newtheorem{proposition}[theorem]{Proposition}
\newtheorem{lemma}[theorem]{Lemma}
\newtheorem{corollary}[theorem]{Corollary}
\theoremstyle{remark}
\newtheorem{remark}[theorem]{Remark}

\newcommand{\Sn}{\mathbb S^n}
\newcommand{\grad}{\nabla_g}
\newcommand{\Hess}{\nabla_g^2}
\newcommand{\Lap}{\Delta_g}
\newcommand{\II}{\mathrm{II}}
\newcommand{\dist}{\operatorname{dist}_g}
\newcommand{\R}{\mathbb R}

\newcommand{\tr}{\operatorname{tr}}
\newcommand{\rank}{\operatorname{rank}}
\newcommand{\Ker}{\operatorname{Ker}}

\title[Lane--Emden and Torsion on the Sphere]{Transformed Convexity for Lane--Emden Equations and Torsion on Spherical Domains}
\author{Xiuda Liang}
\address[Xiuda Liang]{School of Mathematics and Statistics, Central China Normal University, Wuhan 430079, China}
\email{lxddd@mails.ccnu.edu.cn}

\author{Wenjie Wang}
\address[Wenjie Wang]{School of Mathematics and Statistics, Central China Normal University, Wuhan 430079, China}
\email{wjwang3269@mails.ccnu.edu.cn}

\begin{document}
\begin{abstract}
We study the geometry of positive Dirichlet solutions on smooth,
uniformly geodesically convex domains $\Omega\subsetneq\mathbb S^n$,
$n\ge2$. Set $p_n^*=2n/(2n-3)$. For the Lane--Emden equation
$-\Delta_g u=u^p$, $1<p\le p_n^*$, we prove that the set of positive
solution pairs $(p,u)$ has a connected component containing the
near-linear solution curve and projecting onto $(1,p_n^*]$.
Every pair on this component satisfies
$\nabla_g^2(u^{-(p-1)/3})>0$ in $\Omega$.
For the torsion equation $-\Delta_g u=1$, we establish
$\nabla_g^2\sqrt{u}<0$ for its unique Dirichlet solution in all
dimensions $n\ge2$, without an additional diameter or circumradius
assumption. In both settings, the solutions specified above have a
unique nondegenerate maximum and strictly geodesically convex
positive superlevel sets. For the non-power reaction
$f(s)=s^p/(1+\lambda s^\delta)$, with $1<p\le p_n^*$, $\lambda>0$,
and $0<\delta\le(p-1)/3$, we also prove the existence of at least
one positive solution $u$ for which the transform
$\left(\int_u^\infty ds/f(s)\right)^{1/3}$ has positive definite
Hessian. The proofs combine a spherical Schur-complement estimate
with a priori bounds and continuation.
\end{abstract}

\keywords{Lane--Emden equation, spherical torsion, strict convexity, geodesically convex domains, Schur complement}
\subjclass[2020]{35J60, 35B50, 53C21}
\maketitle
\enlargethispage{5pt}

\section{Introduction and main results}
\label{sec:intro}

Let $(\mathbb S^n,g)$ be the unit round sphere, with $n\ge2$, and
let $\Omega\subsetneq\mathbb S^n$ be a smooth domain. Our starting
point is the Dirichlet problem
\begin{equation}\label{eq:introGeneral}
\begin{cases}
-\Lap u=f(u) & \text{in }\Omega,\\
u>0 & \text{in }\Omega,\\
u=0 & \text{on }\partial\Omega,
\end{cases}
\end{equation}
Our principal examples are the Lane--Emden reaction $f(u)=u^p$, $p>1$,
and the torsion problem $f\equiv1$. We seek geometric information that
is generally inaccessible from the maximum principle alone: uniqueness
and nondegeneracy of the maximum, convexity of positive superlevel sets,
and definiteness of the Riemannian Hessian of a suitable transform.
More precisely, a decreasing transform $v=\Psi(u)$ with positive
definite Hessian has strictly convex sublevel sets, which are the
positive superlevel sets of $u$. In the situations considered below,
the behavior at the boundary also ensures that $v$ has a unique
minimum, corresponding to a nondegenerate maximum of $u$. Identifying
transforms for which this mechanism works is our main objective.

We write $\Delta_g$, $\nabla_g$, and
$\nabla_g^2$ for the Laplace--Beltrami operator, gradient,
and Hessian. A domain $\Omega$ is called
\emph{uniformly geodesically convex} if $\overline\Omega$ is
geodesically convex and its boundary satisfies
\begin{equation}\label{eq:introUniformConvexity}
\II(X,Y)=g(\nabla_X\nu,Y),\qquad
\II\ge\kappa_0g_{\partial\Omega}
\quad\text{for some }\kappa_0>0,
\end{equation}
where $\nu$ is the outward unit normal. Here convexity refers to
shorter minimizing geodesics between nonantipodal points. This is the
geometric hypothesis in the main theorems; we impose no separate bound
on the diameter or circumradius of $\Omega$.

Classical results in Euclidean space established several complementary
forms of concavity. Brascamp and Lieb
\cite{BrascampLieb1976} established a foundational log-concavity
framework, while Makar--Limanov \cite{MakarLimanov1971} proved the
square-root concavity of planar torsion functions. Kennington
\cite{Kennington1985} developed power-concavity results for elliptic
boundary value problems. Convexity principles for semilinear equations
were subsequently investigated by Korevaar \cite{Korevaar1983},
Caffarelli and Spruck \cite{CaffarelliSpruck1982}, and Caffarelli and
Friedman \cite{CaffarelliFriedman1985}. A related approach analyzes
the rank of the Hessian; see Korevaar--Lewis
\cite{KorevaarLewis1987}, Caffarelli--Guan--Ma
\cite{CaffarelliGuanMa2007}, and Bian--Guan \cite{BianGuan2009}.
For the geometry of critical points of semistable solutions in convex
Euclidean domains, see \cite{CabreChanillo1998}. Convexity of the
domain alone does not, however, ensure convexity of all superlevel
sets for general semilinear reactions; see \cite{HamelNadirashviliSire2016}.
Further semilinear concavity criteria can be found in
\cite{ChauWeinkove2023}. These works motivate the search for a strict
convexity mechanism that remains effective in the presence of ambient
curvature.

The situation on positively curved manifolds is more delicate.
Commuting covariant derivatives introduces curvature terms into the
evolution or elliptic identities for the Hessian, so Euclidean
convexity arguments need not carry over unchanged. For the first
Dirichlet eigenfunction of a strictly convex spherical domain,
Lee and Wang \cite{LeeWang1987} established spherical
log-concavity; the strict formulation used near the linear limit
is stated in \cite[Proposition~2.1]{GrossiProvenzanoRaom2026}.
Langford and Scheuer \cite{LangfordScheuer2021} proved a concavity
principle for degenerate elliptic equations on spherical domains.
Aryan and Law \cite{AryanLaw2025} extended this circle of ideas to
locally symmetric spaces with nonnegative sectional curvature.
Using conformal deformations, Khan, Saha, and Tuerkoen
\cite{KhanSahaTuerkoen2025} established concavity estimates that
also apply to spherical torsion under a domain-size restriction.
Related symmetry and power-concavity results in Riemannian settings
can be found in \cite{KumaresanPrajapat1998,IshigeSalaniTakatsu2022};
for critical-point properties of semistable solutions on convex
constant-curvature surfaces, see \cite{GrossiProvenzano2024}.
These contributions provide distinct, complementary approaches to
curvature-dependent convexity problems.

Of particular relevance is the recent work of Grossi, Provenzano,
and Raom \cite{GrossiProvenzanoRaom2026} on uniformly convex domains
of $\mathbb S^2$. For $1<p\le3$, they proved the strict convexity of
$u^{-(p-1)/2}$ for \emph{every} positive Lane--Emden solution; their
results also include strict square-root concavity for planar spherical
torsion. Gladiali, Grossi, and Provenzano
\cite{GladialiGrossiProvenzano2026}, in a different direction,
established uniqueness and nondegeneracy of the critical point for
positive semistable solutions on certain spherical and hyperbolic
surfaces under sharp geometric conditions. The latter work concerns
the critical-point structure of semistable solutions, rather than
positive definiteness of the Hessian of the transforms studied here.

The torsion problem deserves an additional qualification regarding
prior work. In Euclidean domains, quantitative estimates beyond the
classical concavity statements were obtained, for instance, by
Jia, Ma, and Shi \cite{JiaMaShi2023}. For the sphere, Korevaar
\cite{Korevaar1987} referred to unpublished work with Treibergs
on square-root concavity; this historical reference is also discussed
in \cite{KhanSahaTuerkoen2025}. Accordingly, we do not claim that the
spherical concavity phenomenon was previously unknown. Our torsion
result concerns a strict Hessian inequality in all dimensions for
smooth uniformly geodesically convex domains, with no extra size
restriction, and a quantitative consequence. In particular, the
higher-dimensional proof must control all possible null directions
of the Hessian, rather than rely on arguments specific to surfaces.
Recent related developments include concavity principles for
elliptic and parabolic equations on Euclidean convex domains
\cite{GalloMoraschiSquassina2026} and square-root concavity for
torsion on small horo-convex domains in hyperbolic space
\cite{ZhangZhou2026}. The ambient geometry and the hypotheses in those
works differ from the spherical setting considered here.

The two problems also place different demands on global arguments.
For Lane--Emden, the exponent varies and the result concerns a
connected component of the full positive-solution set; no uniqueness
statement is made for arbitrary exponents. This differs both in the
transform and in its quantifiers from
\cite{GrossiProvenzanoRaom2026}. For torsion, the solution is unique
on each fixed domain, but strict concavity must persist as the domain
is deformed. The common analytical tool is a local Schur-complement
estimate that rules out null directions once the transformed Hessian
is known to be nonnegative. Compactness and boundary control are
needed to make this local conclusion effective globally.

We now give the precise statements, beginning with the Lane--Emden
equation
\begin{equation}\label{eq:mainLE}
\begin{cases}
-\Lap u=u^p & \text{in }\Omega,\\
u>0 & \text{in }\Omega,\\
u=0 & \text{on }\partial\Omega,
\end{cases}
\qquad p>1.
\end{equation}
No symmetry assumption on $\Omega$ is required.

\begin{theorem}\label{thm:main}
Let $n\ge2$, and let $\Omega\subsetneq\Sn$ be a smooth, connected,
uniformly geodesically convex domain. Define
\[
P:=p_n^*:=\frac{2n}{2n-3}=1+\frac{3}{2n-3},
\qquad X:=C_0(\overline\Omega),
\]
and let
\[
\mathscr S:=\bigl\{(p,u)\in(1,P]\times X:
 u\text{ is a positive classical solution of }\eqref{eq:mainLE}\bigr\}
\]
be endowed with the relative topology of $(1,P]\times X$.
Then the following assertions hold.

\begin{enumerate}[label=\textup{(\roman*)},leftmargin=2.9em]
\item \emph{Near-linear solutions.}
There exists $\varepsilon_\Omega>0$ such that, for every
$1<p<1+\varepsilon_\Omega$, problem~\eqref{eq:mainLE}
has a unique positive solution $u_p$. This solution is nondegenerate and satisfies
\[
\Hess\bigl(u_p^{-(p-1)/3}\bigr)>0\qquad\text{in }\Omega.
\]
The near-linear solution curve
\[
\Gamma:=\bigl\{(p,u_p):1<p<1+\varepsilon_\Omega\bigr\}
\]
is connected.

\item \emph{Global connected component.}
Let $\mathscr K$ denote the connected component of $\mathscr S$
containing $\Gamma$. Then
\begin{equation}\label{eq:componentProjection}
\pi_p(\mathscr K)=(1,P],
\end{equation}
and every $(p,u)\in\mathscr K$ satisfies
\begin{equation}\label{eq:mainconv}
\Hess\bigl(u^{-(p-1)/3}\bigr)>0\qquad\text{in }\Omega.
\end{equation}
Moreover, $\mathscr K\cap([a,P]\times X)$ is compact for every
$a\in(1,P]$.

\item \emph{Geometric consequences.}
Every $u$ with $(p,u)\in\mathscr K$ has a unique critical point,
which is a nondegenerate strict maximum. For every
$0<t<\max_{\Omega}u$, its positive superlevel set
$\{u>t\}$ is strictly geodesically convex.
\end{enumerate}
\end{theorem}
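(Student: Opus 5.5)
The proof has four parts: a bifurcation analysis near $p=1$, a priori bounds and a closedness argument for the convexity property along $\mathscr K$, a continuation argument to reach $(1,P]$, and geometric consequences read off from the Hessian condition. Throughout, write $w=u^{-(p-1)/3}$.

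\textbf{Part (i): near-linear solutions.} We use the rescaling $u=\lambda_1^{1/(p-1)}\,t\,(\varphi_1+\psi)$. Here $\lambda_1$ is the first Dirichlet eigenvalue and $\varphi_1$ the normalized first eigenfunction. The Lee--Wang strict log-concavity, in the form \cite[Proposition~2.1]{GrossiProvenzanoRaom2026}, gives $\Hess(-\log\varphi_1)>0$ in $\Omega$, quantitatively up to the boundary.
\begin{enumerate}[label=\textup{(\alph*)},leftmargin=2.9em]
\item By the implicit function theorem around $p=1$, normalized solutions $v_p=u_p/\|u_p\|$ converge to $\varphi_1$ in $C^2(\overline\Omega)$. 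The Lyapunov--Schmidt reduction is nondegenerate because $\lambda_1$ is simple. For $p$ close to $1$ this also gives nondegeneracy of $u_p$.
\item Uniqueness for $p$ near $1$ follows from a compactness argument. Any family of positive solutions, suitably normalized, converges to a positive first eigenfunction, so it lies in the IFT neighbourhood.
\item For the Hessian, note $w=\exp(-\tfrac{p-1}{3}\log u)$. Then
\[
\Hess w=\tfrac{p-1}{3}\,w\Bigl(\Hess(-\log u)+\tfrac{p-1}{3}\,d\log u\otimes d\log u\Bigr).
\]
In the interior, positivity follows from $C^2$ closeness to $\log\varphi_1$. Near $\partial\Omega$ we use the boundary expansion $u\sim d\,|\nabla u|$: the term $-\Hess u/u$ contributes $\kappa_0$-positivity tangentially, and $|\nabla u|^2/u^2$ blows up in the normal direction. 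This gives a uniform strip where positivity holds for all $p$ near $1$.
\item Connectedness of $\Gamma$ is immediate, since $p\mapsto u_p$ is continuous.
\end{enumerate}

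\textbf{Part (ii): a priori bounds and the closedness argument.} Since $P=\tfrac{2n}{2n-3}$ is subcritical, Gidas--Spruck blow-up plus Liouville gives a priori bounds for $p\in[a,P]$. These yield compactness of $\mathscr K\cap([a,P]\times X)$ in $C^2$. We then set
\[
\mathscr K^+=\bigl\{(p,u)\in\mathscr K:\Hess w>0\ \text{in }\Omega\bigr\}.
\]
It is nonempty, since it contains $\Gamma$. We show it is open and closed in $\mathscr K$.
\begin{enumerate}[label=\textup{(\alph*)},leftmargin=2.9em]
\item \emph{Openness.} Interior positivity persists under $C^2$-perturbation. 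Near the boundary, the uniform boundary expansion again gives positivity in a strip of uniform width, using Hopf's lemma and $\II\ge\kappa_0$.
\item \emph{Closedness.} A limit of pairs in $\mathscr K^+$ has $\Hess w\ge0$. Null directions cannot appear at the boundary, by the same strip argument. They also cannot appear in the interior, by the spherical Schur-complement estimate, which is the main tool and the main obstacle.
\end{enumerate}

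\textbf{The main obstacle: the Schur-complement computation.} This is where the exponent $1/3$ and the threshold $p\le P$ should enter. Write $-\Lap u=u^p$ in terms of $w$, commute covariant derivatives on $\Sn$ (Ricci $=(n-1)g$), and compute the elliptic equation for the smallest eigenvalue of $\Hess w$. A constant-rank or strong-maximum-principle argument in the style of Korevaar--Lewis / Bian--Guan then proceeds as follows. Suppose the minimal eigenvalue vanishes at an interior point, so $\Hess w$ has a kernel there. The principal minors split via the Schur complement, leaving a quadratic form in third derivatives plus a zeroth-order remainder. This combination must be shown to have a favourable sign. I would first try to verify that the gradient terms form a nonnegative quadratic form exactly when $\tfrac{p-1}{3}\cdot\frac{2n-3}{\cdot}$-type coefficients satisfy $p\le 1+3/(2n-3)$. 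I would also check that the curvature term $(n-1)$ has the good sign for a convex transform. That sign is what permits avoiding diameter restrictions. The conclusion is a contradiction with the strong maximum principle, so $\Hess w>0$ persists.

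\textbf{Conclusion of (ii) and part (iii).} Connectedness gives $\mathscr K^+=\mathscr K$. For the projection $\pi_p(\mathscr K)=(1,P]$: first, $\pi_p(\mathscr K)$ is connected and contains $(1,1+\varepsilon_\Omega)$. Second, if $\pi_p(\mathscr K)$ stopped at some $p_0<P$, a Rabinowitz/Leray--Schauder degree argument would apply. Here the degree of positive solutions on bounded sets is $-1$ by homotopy to $p$ near $1$, where the solution is nondegenerate and unique, and the a priori bounds are used. This argument shows the component must reach $p=P$; projection closedness at $P$ uses compactness. For part (iii), $w$ is strictly convex and tends to $+\infty$ at $\partial\Omega$. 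Hence $w$ has a unique nondegenerate minimum, and its sublevel sets $\{u>t\}$ are strictly geodesically convex.
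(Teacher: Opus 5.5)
Your architecture matches the paper's: near-linear initialization, a uniform boundary collar, an open--closed argument along the component, Leray--Schauder continuation, and reading off the geometry from $\Hess w>0$. The genuine gap is the step you flag yourself as the main obstacle. The interior strictification lemma is never proved; it is only announced with placeholder coefficients. The lemma states: if $\Hess w\ge0$ near $x_0$ and $p\le P$, then $\Hess w>0$.

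This is not a detail. It is the only place where the exponent $1/3$ and the threshold $P$ enter; the a priori bounds, collar and degree steps work for every $p<p_S(n)$ and every $\beta>0$. Without it, closedness of $\mathscr K^+$ fails, and nothing stops $\Hess w$ from acquiring an interior null direction along $\mathscr K$.

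What is needed is the following. Suppose $A=\Hess v$ has a null direction $e_1$ at $x_0$ and rank $r\in\{1,\dots,n-1\}$.
\begin{itemize}
\item The smallest eigenvalue is not smooth, so use the $C^2$ Schur complement $\phi=A_{11}-z^TM^{-1}z\ge0$, with $\phi(x_0)=0$ and $M$ the positive block. Then $\Delta\phi(x_0)\ge0$, while exactly
$\Delta\phi(x_0)=(\Delta^\nabla A)_{11}-2\sum_{\alpha,k}(\nabla_kA_{1\alpha})^2/a_\alpha$.
\item Use the spherical commutation formulas, $\nabla_kA_{11}(x_0)=0$, and the spherical Codazzi identity $\sum_{\alpha}\nabla_\alpha A_{1\alpha}=\nabla_1F+rv_1$, together with weighted Cauchy--Schwarz.
\end{itemize}
For $\Lap v=cq/v+dv^k$, with $c=1+1/\beta$, $d=\beta$, $k=1-(p-1)/\beta$, this gives
\[
\Delta\phi(x_0)\le\Bigl[\frac{4r-2c}{v}+d(k+1)(k+2)v^{k-2}-\frac2F\bigl(r+d(k+1)v^{k-1}\bigr)^2\Bigr]v_1^2-2dv^{k}.
\]
Three features of this bound do the work:
\begin{itemize}
\item The choice $\beta=(p-1)/3$ is exactly $k=-2$, which kills the possibly positive middle term.
\item The rank term $4r/v$ is absorbed for all $r\le n-1$ if and only if $c=(p+2)/(p-1)\ge2(n-1)$, i.e.\ $p\le P$.
\item The strictly negative zeroth-order term $-2dv^{-2}$ gives the contradiction even when $v_1(x_0)=0$.
\end{itemize}
Your heuristic that the curvature term simply ``has the good sign'' is therefore only half right. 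Curvature produces the favourable term $-2dv^{-2}$, but also the unfavourable $4r/v$ through the spherical Codazzi identity, and it is the latter that fixes $P$.

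Two smaller points also need filling.

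First, in part (i) you assert that arbitrary positive solutions, suitably normalized, converge to $\varphi_1$, which is what puts them in your IFT neighbourhood. This requires proving that $\|u_j\|_\infty^{p_j-1}$ stays bounded as $p_j\to1$, which you do not address. The paper does it by blowing up to $-\Delta Z=Z$ on $\mathbb R^n$ or $\mathbb R^n_+$ and testing against the first eigenfunction of a large ball.

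Second, both the compactness of $\mathscr K\cap([a,P]\times X)$ and your degree computation need the lower bound $\|u\|_\infty\ge\lambda_1^{1/(p-1)}$, obtained by testing against $\varphi_1$. It keeps $C^0$-limits nontrivial, so they remain in $\mathscr S$. It also lets the degree be computed on an annulus excluding $u=0$; otherwise the index of the trivial solution enters the count.
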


For example, $p_2^*=4$, $p_3^*=2$, and $p_4^*=8/5$.
Thus, in dimension two, the admissible interval includes $3<p\le4$,
which lies beyond the exponent range of
\cite{GrossiProvenzanoRaom2026}. This comparison concerns different
transforms and different sets of solutions: the cited paper treats
every positive solution for $1<p\le3$, whereas our conclusion holds
on the component $\mathscr K$. In particular, Theorem~\ref{thm:main}
does not imply uniqueness at a fixed exponent or convexity for other
components. The endpoint $p_n^*$ arises from our Hessian criterion;
we do not assert its optimality for the Lane--Emden equation.

Our second principal result concerns the torsion function, the
unique solution of
\begin{equation}\label{eq:torsion}
\begin{cases}
-\Lap u=1 & \text{in }\Omega,\\
u=0 & \text{on }\partial\Omega.
\end{cases}
\end{equation}
In this case the relevant concave transform is $\sqrt u$.

\begin{theorem}\label{thm:torsion}
Let $n\ge2$, and let $\Omega\subsetneq\Sn$ be a smooth uniformly
geodesically convex domain. If $u$ solves \eqref{eq:torsion}, then
\begin{equation}\label{eq:torsionconc}
\Hess\sqrt u(X,X)<0\quad
\text{for every }x\in\Omega\text{ and }0\ne X\in T_x\Sn.
\end{equation}
No additional diameter or circumradius bound on $\Omega$ is required.
\end{theorem}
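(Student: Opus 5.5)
We prove Theorem~\ref{thm:torsion} by a deformation (continuity) argument in the domain, with the convexity preserved along the deformation by a constant-rank/Schur-complement estimate. Set $v:=-\sqrt u$, so that \eqref{eq:torsionconc} is equivalent to $\Hess v>0$. Writing $u=v^2$ and using $-\Lap u=1$, we obtain
\[
v\Lap v+|\grad v|^2=-\tfrac12 ,
\]
which is a uniformly elliptic equation for $v$ in $\Omega$.

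\textbf{Step 1: deformation and base case.} Choose a smooth family $\{\Omega_t\}_{t\in[0,1]}$ of smooth uniformly geodesically convex domains with $\Omega_1=\Omega$ and $\Omega_0$ a small geodesic ball. One way to build it is to write the domains as sublevel sets of a convex combination of the distance functions (or of the support functions in a gnomonic chart centered at an interior point), shrinking toward a point of $\Omega$. Along the way we need $\II\ge\kappa_t g$ with $\kappa_t$ bounded below uniformly in $t$. On a geodesic ball the torsion function is radial and explicit, $u=\frac{\cos r-\cos R}{n\cos r}$ up to the precise ODE solution, and a direct computation gives $\Hess\sqrt u<0$ there.

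\textbf{Step 2: boundary control uniform in $t$.} Near $\partial\Omega_t$ we have $u\sim d$, where $d$ is the distance to the boundary, and $|\grad u|\ge c>0$ by the Hopf lemma. Expanding $\sqrt u$ in tangential and normal directions gives
\[
\Hess\sqrt u=\tfrac{1}{2\sqrt u}\Hess u-\tfrac{1}{4u^{3/2}}\,du\otimes du .
\]
In this expression the normal-normal entry is $\sim -c\,d^{-3/2}$, and the tangential block is $\approx -\tfrac{|\grad u|}{2\sqrt u}\II+O(1)$, which is negative by the uniform convexity \eqref{eq:introUniformConvexity}. A Schur-complement bound then handles the mixed terms. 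As a result, $\Hess\sqrt u<0$ holds in a collar of fixed width $\delta$, uniformly in $t$, using uniform $C^{3}$ bounds on $u_t$ up to the boundary.

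\textbf{Step 3: closedness of strictness.} Let $t^*$ be the first time at which strict concavity fails. By continuity $\Hess v\ge0$ on $\Omega_{t^*}$, and by Step 2 a null direction can only occur at an interior point. This step is the main obstacle. We apply the constant-rank method: set $\phi=\sigma_{l+1}(\Hess v)$ (or take the smallest eigenvalue in a viscosity sense) near a point where the rank is minimal, $l<n$. Differentiating the equation twice and commuting covariant derivatives on $\Sn$ (with $R_{ijkl}=g_{ik}g_{jl}-g_{il}g_{jk}$) produces a differential inequality
\[
L\phi\le C(\phi+|\grad\phi|)
\]
modulo a curvature term. The task is to show this curvature term has the favourable sign. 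For the equation of $v$, the null-direction terms involve $v_{ii}$ and $v_i^2$ with $i$ in the kernel, and after the Schur-complement reduction they combine into a quadratic form whose sign is governed by $|\grad v|^2$ and $v$. This is where the exponent $\tfrac12$ (the limit $p\to0$ of the family $u^{-(p-1)/3}$ is different, so the torsion case needs its own computation) has to be checked to be admissible in every dimension. The sphere's positive curvature contributes $+|\grad v|^2$-type terms that should help rather than hurt. Once the inequality holds, the strong maximum principle forces $\phi\equiv0$ on the whole connected domain, which contradicts the strictness near $\partial\Omega_{t^*}$ from Step 2.

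\textbf{Step 4: openness and conclusion.} Openness in $t$ follows from the continuous dependence of $u_t$ in $C^2_{\mathrm{loc}}$, combined with the uniform collar estimate of Step 2. Together with closedness from Step 3, this shows the set of good $t$ is all of $[0,1]$, and in particular $t=1$ gives \eqref{eq:torsionconc}.
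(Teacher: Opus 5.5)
Your global architecture (deformation from a small geodesic ball, a collar of fixed width where $\Hess\sqrt{u_t}<0$, and an open--closed argument in $t$) is the same as the paper's. Steps~2 and~4 correspond to Lemma~\ref{lem:torsionCollar} and the openness part of the proof.

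\textbf{The gap: Step~3 is not carried out.} This step carries the entire analytic content, and you leave it unverified: you say the curvature term ``has to be checked'' to have the favourable sign and that positive curvature ``should help''. As written, nothing excludes an interior null direction of $\Hess v\ge0$ at $t^*$, so closedness is not proved.

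The heuristic is also unreliable. With $\Phi(v,q)=-(q+\tfrac12)/v$ one has $\Phi_q=-1/v>0$. The spherical Bochner identity \eqref{eq:qSphere} gives $q_{11}=2(q-v_1^2)$ at a null direction, so the curvature contribution $2\Phi_q q=-2q/v$ is \emph{positive}, i.e.\ it has the wrong sign. It is cancelled only by the $-2F$ coming from the commutator term in $\Delta^\nabla\Hess v=\nabla^2F+2n\Hess v-2Fg$ (Lemma~\ref{lem:sphereidentities}). The net zeroth-order term is then $2\Phi_q q-2F=1/v$.

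Likewise, the spherical Codazzi identity \eqref{eq:CodazziSphere} feeds the rank into the gradient terms through $\sum_{\alpha\in G}\nabla_\alpha A_{1\alpha}=F_1+rv_1$. For the Lane--Emden transform this very term is what forces $p\le 2n/(2n-3)$. So ``admissible in every dimension'' is a genuine verification, not a formality.

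\textbf{What the paper does instead.} Fix $x_0$ where $A=\Hess v\ge0$ has a null vector $e_1$ and rank $r$. The paper uses the Schur-complement function $\phi=A_{11}-z^TM^{-1}z\ge0$ built from the positive block $M$, so $\phi(x_0)=0$ and hence $\Delta\phi(x_0)\ge0$. It computes exactly $\Delta\phi(x_0)=(\Delta^\nabla A)_{11}-2\sum_{\alpha,k}(\nabla_kA_{1\alpha})^2/a_\alpha$. Weighted Cauchy--Schwarz then gives \eqref{eq:torsionSchurFinal}: $\Delta\phi(x_0)\le\bigl(\tfrac{4r+2}{v}-\tfrac{2r^2}{F}\bigr)v_1^2+\tfrac1v<0$, using only $v<0<F$.

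Because this bound is strictly negative, the contradiction is pointwise. No constant-rank neighbourhood, regularity of $\sigma_{l+1}$, or strong maximum principle is needed. Your constant-rank route would also close, but only once this sign is established.

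\textbf{Smaller points in Step~1.}
\begin{enumerate}
\item A gnomonic chart covers only an open hemisphere. Since no diameter bound is assumed, you must prove that $\overline\Omega$ lies in one. The paper does this in Lemma~\ref{lem:hemisphere}, a pointed-cone argument using $\II>0$.
\item You must show the interpolated domains keep a uniform lower bound on $\II$. The paper combines \eqref{eq:centralII} with Minkowski interpolation of support functions. Convex combinations of distance functions are not shown to do this.
\item Your ball formula agrees with the torsion function only to leading order; for $n=3$ the exact solution is $\tfrac12(r\cot r-R\cot R)$. The fact you actually need is Lemma~\ref{lem:torsionBall}.
\end{enumerate}
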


The strict Hessian inequality has the following geometric and
quantitative consequences.

\begin{corollary}\label{cor:torsionConsequences}
Under the assumptions of Theorem~\ref{thm:torsion}, the torsion function
has exactly one critical point, which is a nondegenerate maximum. Each
positive superlevel set $\{u>c\}$, $0<c<\max_\Omega u$, is a relatively
compact smooth geodesically convex domain whose outward second
fundamental form is positive definite. In addition, there exists
$c_\Omega>0$ such that
\begin{equation}\label{eq:torsionUniform}
\Hess\sqrt u\le-c_\Omega g\qquad\text{in }\Omega.
\end{equation}
\end{corollary}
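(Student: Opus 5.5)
We take Theorem~\ref{thm:torsion} as given and work with $w:=\sqrt u$, which is smooth in $\Omega$, positive there, and satisfies $\Hess w<0$ pointwise. The proof has three steps: a uniform bound near $\partial\Omega$, compactness in the interior, and then the geometric statements.

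\textbf{Step 1: boundary layer.} Near $\partial\Omega$ we use the boundary distance $d=\dist(\cdot,\partial\Omega)$. The Hopf lemma gives $\partial_\nu u<0$ on $\partial\Omega$, so $u= a\,d+O(d^2)$ with $a=|\grad u|\ge a_0>0$ on $\partial\Omega$ by compactness. Then $w=u^{1/2}$ and
\[
\Hess w=\tfrac12 u^{-1/2}\Hess u-\tfrac14 u^{-3/2}\,du\otimes du .
\]
In the normal direction the second term is of order $-\tfrac14 a^{1/2}d^{-3/2}$. This dominates the first term, which is only $O(d^{-1/2})$.

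In tangential directions the first term dominates. By uniform convexity \eqref{eq:introUniformConvexity}, the restriction of $\Hess u$ to $T\partial\Omega$ equals $-|\grad u|\,\II\le -a_0\kappa_0 g_{\partial\Omega}$ on $\partial\Omega$. So for $d$ small the tangential block is $\le -c\,d^{-1/2}$.

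The mixed terms are $O(d^{-1/2})$. A Schur-complement estimate of the type used in the paper then shows that, in a collar $\{d<\delta\}$, the tangential block dominates the mixed terms: the normal entry is $\lesssim -d^{-3/2}$ and the tangential block is $\lesssim -d^{-1/2}$. Hence $\Hess w\le -c_1 g$ in the collar.

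\textbf{Step 2: interior compactness.} On the compact set $\{d\ge\delta\}$, the largest eigenvalue of $\Hess w$ is a continuous function and is strictly negative by Theorem~\ref{thm:torsion}. It therefore has a negative maximum $-c_2$. Taking $c_\Omega=\min(c_1,c_2)$ proves \eqref{eq:torsionUniform}.

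\textbf{Step 3: geometric consequences.} By \eqref{eq:torsionUniform}, $w$ is strictly geodesically concave along geodesics in $\Omega$, which is geodesically convex. Since $w>0$ inside and $w=0$ on $\partial\Omega$, $w$ attains an interior maximum. Strict concavity along the geodesic joining two critical points would contradict both being critical, so the critical point is unique. At that point $\Hess w<0$, and since $\Hess u=2w\Hess w$ at a critical point, the maximum is nondegenerate. Critical points of $u$ and $w$ coincide.

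For $0<c<\max u$, the set $\{u>c\}=\{w>\sqrt c\}$ is relatively compact in $\Omega$ because $u\to0$ at $\partial\Omega$. Its boundary is smooth because $\grad u\ne0$ away from the unique maximum. The set is geodesically convex as a superlevel set of a concave function on a convex domain. Its outward second fundamental form is $\II=-\Hess w|_{T}/|\grad w|\ge c_\Omega/|\grad w|>0$.

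\textbf{Main obstacle.} The delicate part is Step 1: uniform control of the mixed normal–tangential terms near $\partial\Omega$, which is where the Schur-complement bookkeeping is needed. A cleaner alternative for Step 1 is to expand $u$ in Fermi coordinates. This gives $\Hess w$ to leading order as $d^{-1/2}\times$ a matrix that is negative definite uniformly thanks to $\kappa_0$.
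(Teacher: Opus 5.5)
Your proof is correct and follows essentially the same route as the paper. The uniqueness and nondegeneracy of the critical point, and the convexity and positive second fundamental form of the superlevel sets, come from strict concavity of $\sqrt u$ along minimizing geodesics. The uniform bound \eqref{eq:torsionUniform} comes from a boundary-collar estimate combined with compactness of the complementary interior set. The only difference is that you rederive the collar bound on the fixed domain by block (Schur-complement) bookkeeping, with normal entry of order $-d^{-3/2}$, tangential block of order $-d^{-1/2}$, and mixed terms $O(d^{-1/2})$. The paper instead quotes \eqref{eq:torsionCollarEstimate} from Lemma~\ref{lem:torsionCollar}, which is the same computation carried out with Young's inequality in the splitting $X=\tau+sN$.
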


The proof strategies have a common local component but distinct
global steps. For Lane--Emden, blow-up estimates and Leray--Schauder
continuation produce the connected solution set, in the spirit of
\cite{Rabinowitz1971,Mawhin1997}. For torsion, the argument proceeds
by deforming a geodesic ball through uniformly geodesically convex
domains. In both cases, boundary estimates must be uniform along the
chosen family: otherwise a null Hessian direction could arise at the
boundary before the interior strictification argument applies. We
establish these estimates in the relevant proof sections.

The approach also yields a nonhomogeneous extension. We state one
explicit consequence for the inhibited reaction
$f(u)=u^p/(1+\lambda u^\delta)$; the more general intrinsic criterion
is developed in Section~\ref{sec:nonpower}.

\begin{theorem}
\label{thm:inhibited}
Let $\Omega\subset\Sn$ satisfy the geometric assumptions of
Theorem~\ref{thm:main}. Suppose that
\[
1<p\le p_n^*:=\frac{2n}{2n-3},\qquad \lambda>0,\qquad
0<\delta\le\frac{p-1}{3}.
\]
Then
\begin{equation}\label{eq:inhibitedPDE}
\begin{cases}
-\Lap u=\dfrac{u^p}{1+\lambda u^\delta}&\text{in }\Omega,\\
u>0&\text{in }\Omega,\\
u=0&\text{on }\partial\Omega
\end{cases}
\end{equation}
admits at least one positive classical solution satisfying
\begin{equation}\label{eq:inhibitedConvIntro}
\Hess\left[
\left(\frac{u^{1-p}}{p-1}
+\frac{\lambda u^{1-p+\delta}}{p-1-\delta}\right)^{1/3}\right]>0
\qquad\text{in }\Omega.
\end{equation}
This solution has a unique nondegenerate maximum and strictly
geodesically convex positive superlevel sets; in dimension $n=2$ the
assertion includes the interval $3<p\le4$.
No assertion is made for all positive solutions of \eqref{eq:inhibitedPDE}.
\end{theorem}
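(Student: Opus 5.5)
We prove Theorem~\ref{thm:inhibited} by running the continuation scheme of Theorem~\ref{thm:main}, with the exponent replaced by a homotopy parameter in the reaction. Set
\[
\Phi(s)=\int_s^\infty\frac{dt}{f(t)}=\frac{s^{1-p}}{p-1}+\frac{\lambda s^{1-p+\delta}}{p-1-\delta},\qquad v=\Phi(u)^{1/3}.
\]
Here $\delta\le(p-1)/3<p-1$, so $\Phi$ is finite, decreasing, and blows up as $s\to0$. The transform in \eqref{eq:inhibitedConvIntro} is exactly $v$, and for $\lambda=0$ it reduces to a constant multiple of $u^{-(p-1)/3}$.

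\textbf{Step 1: the local Schur-complement criterion for $v=\Phi(u)^{1/3}$.} Starting from $-\Lap u=f(u)$ and the spherical Bochner/Ricci commutation, we compute the elliptic inequality satisfied by $\Hess v$ at a point where $\Hess v\ge0$ has a null direction. We then use the Schur-complement estimate to show that the constant-rank/strong-maximum-principle argument rules out null directions. This requires the structural condition on $f$ identified in the intrinsic criterion of Section~\ref{sec:nonpower}. For $f=s^p$ that condition is precisely $p\le p_n^*$.

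For the inhibited $f$, we verify the criterion pointwise. Writing $\Phi$ and $f'\Phi$, the relevant quantity depends on $s$ only through the ratio $\theta=\lambda s^\delta\in(0,\infty)$. We check it as an interpolation between the pure powers $s^p$ (as $\theta\to0$) and $s^{p-\delta}$ (as $\theta\to\infty$). Both exponents lie in $(1,p_n^*]$: indeed $p-\delta\ge p-(p-1)/3>1$. The hypothesis $\delta\le(p-1)/3$ is exactly what is needed for the correction terms, which are linear in $\delta$, to have the right sign so that the mixed expression stays on the admissible side. This algebraic verification is the main computational point.

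\textbf{Step 2: a priori bounds and the homotopy.} Consider the family $f_\tau(s)=s^{p}/(1+\tau\lambda s^\delta)$ for $\tau\in[0,1]$. At $\tau=0$ we start from the component $\mathscr K$ of Theorem~\ref{thm:main}, which contains at exponent $p$ a solution with $\Hess(u^{-(p-1)/3})>0$. Alternatively, we run the full two-parameter continuation from the near-linear curve $\Gamma$, with $p$ and $\tau$ varying.

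Uniform $L^\infty$ bounds for positive solutions follow from Gidas--Spruck-type blow-up. Since $f_\tau(s)\sim s^{p-\delta}/(\tau\lambda)$ at infinity, the limit problem is Lane--Emden with a subcritical exponent, and the boundary is handled via convexity by moving planes or Kelvin-type arguments. Lower bounds, which prevent collapse to zero, follow since $f_\tau(s)/s\to0$ as $s\to0$, by the first-eigenvalue argument. Leray--Schauder degree then gives a connected branch of solutions $(\tau,u)$ reaching $\tau=1$.

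\textbf{Step 3: open-closed argument along the branch.} Openness: the set of branch points where $\Hess v_\tau>0$ in $\Omega$ is open. This needs uniform control at $\partial\Omega$. Near the boundary, $v\sim c\,u^{-(p-1)/3}$ (the $\lambda$-term is lower order), and the Hopf lemma together with $\II\ge\kappa_0$ gives strict convexity of $v$ in a collar, uniformly along compact pieces of the branch. We reuse the boundary estimates proved for Theorem~\ref{thm:main}, applied to this asymptotically power-like $\Phi$.

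Closedness: a limit point has $\Hess v\ge0$. Strict positivity at boundary points comes from the collar estimate, and at interior points from Step 1.

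Hence every point of the branch, and in particular some solution at $\tau=1$, satisfies \eqref{eq:inhibitedConvIntro}. Since $v\to+\infty$ at $\partial\Omega$ and $\Hess v>0$, $v$ has a unique nondegenerate minimum, which is the maximum of $u$, and strictly convex sublevel sets, which are the superlevel sets of $u$. These geometric consequences follow exactly as in Theorem~\ref{thm:main}(iii).

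I expect the main obstacle to be Step 1: verifying that the inhibited $\Phi$ satisfies the intrinsic criterion for all $\theta$ exactly under $\delta\le(p-1)/3$. The uniform boundary collar estimate for non-homogeneous $f$ is a secondary difficulty.
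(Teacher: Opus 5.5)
Your overall architecture matches the paper's: the intrinsic Schur criterion, a priori bounds, Leray--Schauder continuation, and open--closed propagation with a uniform collar. However, two steps fail as written.

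\textbf{Initialization of the continuation.} Your primary route fixes the exponent $p$, deforms only $\tau$, and starts ``from $\mathscr K$'' at $\tau=0$. This does not initialize the open--closed argument. At a fixed exponent $p$ no uniqueness is known, so the only degree available is that of the whole annulus. The continuation theorem then gives a continuum of fixed points meeting the face $\tau=0$ at \emph{some} Lane--Emden solution. That solution need not lie in $\mathscr K$, i.e.\ need not satisfy $\Hess(u^{-(p-1)/3})>0$, and then there is nothing to propagate. The continuation theorem pins down the starting point only when the initial fixed-point set is a singleton. This is why the paper starts at the unique nondegenerate near-linear solution at some $p_*$ close to $1$.

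So your ``alternative'' is the route you need, but the path in the $(p,\tau)$-plane must satisfy the hypotheses of Proposition~\ref{prop:inhibitedStructure} at every parameter value. If you move $p$ and $\tau$ together with $\delta$ fixed, then for $\tau>0$ and $p_t$ near $1$ you get $\delta>(p_t-1)/3$, or even $\delta\ge p_t-1$, where $H_t\equiv\infty$. The closedness step then collapses. The paper uses
\[
p_t=(1-t)p_*+tp,\qquad \delta_t=\theta(p_t-1),\qquad \theta=\delta/(p-1),\qquad f_t=\frac{u^{p_t}}{1+t\lambda u^{\delta_t}}.
\]
A concatenated path also works: first raise $p$ at $\tau=0$, then raise $\tau$ at fixed $p$.

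\textbf{A priori bounds.} Your claim that the blow-up limit is a subcritical Lane--Emden problem is not uniform along the homotopy. Take $M_j=\|u_j\|_\infty\to\infty$, $\rho_j^2=M_j/f_{\tau_j}(M_j)$ and $z_j=\tau_j\lambda M_j^{\delta}$. The normalized reactions are $h_j(s)=s^{p}(1+z_j)/(1+z_js^{\delta})$. If $\tau_j\to0$, one can have $z_j\to z_\infty\in(0,\infty)$. The limit equation is then $-\Delta W=h(W)$ with the non-power reaction $h(s)=s^p(1+z_\infty)/(1+z_\infty s^\delta)$, so pure-power Liouville theorems do not suffice.

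The paper handles the whole-space case by noting $h\ge s^p$ on $[0,1]$. Thus $W$ would be a positive supersolution of $-\Delta W\ge W^p$. This is impossible for $n\ge3$ because $p\le p_n^*<n/(n-2)$ (Armstrong--Sirakov), and for $n=2$ because superharmonic functions bounded below on $\mathbb R^2$ are constant. For the half-space case it uses Lemma~\ref{lem:inhibitedConvexReaction} (convexity of every possible limit $h$) together with the convex-reaction Liouville theorem of Dupaigne--Sirakov--Souplet. A supersolution argument there would only reach $p\le(n+1)/(n-1)$, which misses $3<p\le4$ when $n=2$.

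Your moving-planes alternative near $\partial\Omega$ might exclude boundary blow-up instead. It would have to be carried out with great-sphere reflections, uniformly along the family, and as written it is only named.

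\textbf{Step~1.} Checking the pure-power endpoints $z\to0$ and $z\to\infty$ says nothing about intermediate $z$. Condition (H2) holds with equality at both endpoints, so its content lies entirely in between. The hypothesis $\delta\le(p-1)/3$ enters through the exponent $1-(p-1)/(3\delta)\le0$ of $z$ in $W(z)$, not through terms linear in $\delta$. This verification is exactly Proposition~\ref{prop:inhibitedStructure}, which you should cite rather than re-derive heuristically.
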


For $\lambda>0$, this reaction is not a pure power. It provides an
explicit nonhomogeneous application without introducing an additional
principal theme.

The remainder of the paper is organized as follows.
Section~\ref{sec:local} establishes the spherical Hessian identities
and the local strictification estimate. Section~\ref{sec:LEnear}
proves the Lane--Emden theorem, from the near-linear analysis to the
global continuation argument. Section~\ref{sec:torsionglobal}
develops the domain deformation and proves the torsion theorem.
Section~\ref{sec:nonpower} treats the intrinsic non-power criterion
and the explicit inhibited reaction. Uniform compactness and domain
variation estimates are collected in Appendix~\ref{app:technical};
coordinate calculations appear in Appendix~\ref{app:coordinates}.

\section{Spherical Hessian identities and strictification}
\label{sec:local}

\subsection{Covariant identities and the Schur-complement estimate}

Our curvature convention is
\begin{equation}
\label{eq:curvconv}
R(X,Y)Z=\nabla_X\nabla_YZ-\nabla_Y\nabla_XZ-\nabla_{[X,Y]}Z.
\end{equation}
On the unit sphere,
\begin{equation}
\label{eq:spherecurv}
R(X,Y)Z=\langle Y,Z\rangle X-\langle X,Z\rangle Y.
\end{equation}
For a covariant tensor $T$, its rough Laplacian is
\[
\Delta^\nabla T=\sum_{k=1}^n\left(\nabla_{e_k}\nabla_{e_k}T-\nabla_{\nabla_{e_k}e_k}T\right)
\]
in a local orthonormal frame.

\begin{lemma}
\label{lem:sphereidentities}
Let $v\in C^4(U)$ on an open set $U\subset\Sn$.  Set
\[
A=\Hess v,\qquad q=|\grad v|^2,\qquad F=\Lap v.
\]
In a local orthonormal frame, at the point of evaluation,
\begin{align}
\nabla_kA_{ij}-\nabla_iA_{kj}
&=v_i\delta_{kj}-v_k\delta_{ij},
\label{eq:CodazziSphere}\\
(\Delta^\nabla A)_{ij}
&=\nabla_i\nabla_jF+2nA_{ij}-2F\delta_{ij},
\label{eq:HessLapSphere}\\
\nabla_i\nabla_jq
&=2\sum_kA_{ik}A_{jk}
+2\sum_kv_k\nabla_kA_{ij}
+2(q\delta_{ij}-v_iv_j).
\label{eq:qSphere}
\end{align}
\end{lemma}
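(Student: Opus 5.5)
The plan is to derive all three identities from the Ricci commutation formula combined with the constant-curvature form \eqref{eq:spherecurv}. I will work at a point with a local orthonormal frame that is normal there, so $\nabla_{e_k}e_k=0$ at that point. Index notation is $v_i=\nabla_iv$, $A_{ij}=\nabla_i\nabla_jv$ and $\nabla_kA_{ij}=\nabla^3v(e_k,e_i,e_j)$.

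To avoid sign errors from the convention \eqref{eq:curvconv}, I will fix every sign by a test case. The restriction to $\Sn$ of a linear function $v=\langle x,a\rangle$ satisfies $\Hess v=-v\,g$. For this $v$ we have $A=-vg$, $F=-nv$, $\nabla_kA_{ij}=-v_k\delta_{ij}$ and $\Delta^\nabla A=-(\Lap v)g=nv\,g$. Each identity must hold exactly for this example.

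\emph{Step 1 (Codazzi-type identity).} Since $A$ is symmetric, $\nabla_kA_{ij}-\nabla_iA_{kj}$ equals the commutator $(\nabla_k\nabla_i-\nabla_i\nabla_k)$ applied to the $1$-form $dv$, evaluated on $e_j$. By the Ricci identity this is $-dv(R(e_k,e_i)e_j)$. Inserting \eqref{eq:spherecurv} gives $\pm(v_i\delta_{kj}-v_k\delta_{ij})$. The test function fixes the sign: the left side is $-v_k\delta_{ij}+v_i\delta_{kj}$, which is exactly \eqref{eq:CodazziSphere}.

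\emph{Step 2 (Hessian of $q$).} This step uses only Step 1. Differentiating once gives $\nabla_jq=2v_kA_{kj}$. Differentiating again gives $\nabla_i\nabla_jq=2A_{ik}A_{jk}+2v_k\nabla_iA_{kj}$. Applying \eqref{eq:CodazziSphere} with $i$ and $k$ interchanged gives $\nabla_iA_{kj}=\nabla_kA_{ij}+v_k\delta_{ij}-v_i\delta_{kj}$. Contracting with $v_k$ produces the term $q\delta_{ij}-v_iv_j$, which yields \eqref{eq:qSphere}.

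\emph{Step 3 (rough Laplacian of $A$).} This is the main bookkeeping step. First I write $\nabla_k\nabla_kA_{ij}=\nabla_k\nabla_kA_{ji}$ and apply \eqref{eq:CodazziSphere} to the outer pair of indices. This gives
$$\nabla_k\nabla_kA_{ij}=\nabla_k\nabla_iA_{kj}+A_{ij}-F\delta_{ij}.$$
Next I commute $\nabla_k$ past $\nabla_i$ acting on the $2$-tensor $A$. The Ricci identity produces two curvature terms, one for each slot of $A$. On the unit sphere these are $\mathrm{Ric}=(n-1)g$ acting on the first slot and a full-curvature contraction on the second slot, which together give $nA_{ij}-F\delta_{ij}$ up to sign. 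I then use the contracted Codazzi identity $\nabla_kA_{kj}=\nabla_jF+(n-1)v_j$, which follows from \eqref{eq:CodazziSphere} by setting $k=i$ and summing. Differentiating it gives $\nabla_i\nabla_kA_{kj}=\nabla_i\nabla_jF+(n-1)A_{ij}$. Summing all contributions should give $\nabla_i\nabla_jF+2nA_{ij}-2F\delta_{ij}$.

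The sign of the commutator term is the only place where an error can slip in. I will check it against the test function: the right side of \eqref{eq:HessLapSphere} is $nv\delta_{ij}-2nv\delta_{ij}+2nv\delta_{ij}=nv\delta_{ij}$, which agrees with $\Delta^\nabla A$. Finally, all expressions are tensorial, so the identities computed in a normal frame hold in any local orthonormal frame. The regularity $v\in C^4$ is exactly what is needed for the fourth derivatives appearing in Step 3.
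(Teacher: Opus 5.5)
Your route is the same as the paper's: the Ricci identity for $dv$, the contracted Codazzi identity, and the commutator of $\nabla_k$ and $\nabla_i$ on the $2$-tensor $A$. Steps~1 and~2 are fine.

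\textbf{The gap is in Step~3.} There you leave the commutator contribution as $\pm(nA_{ij}-F\delta_{ij})$ and propose to fix the sign with $v=\langle x,a\rangle$. That test cannot see this term. For this $v$ one has $A=-v\,g=(F/n)\,g$, so $nA-Fg\equiv0$.

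Concretely, with the opposite sign your bookkeeping gives
\[
\nabla_i\nabla_jF+(n-1)A_{ij}+(A_{ij}-F\delta_{ij})-(nA_{ij}-F\delta_{ij})=\nabla_i\nabla_jF .
\]
On your example $\nabla^2F=-n\Hess v=nv\,g=\Delta^\nabla A$. So the ``wrong'' identity $\Delta^\nabla A=\nabla^2F$ passes your check exactly as \eqref{eq:HessLapSphere} does. As written, the curvature term $2nA_{ij}-2F\delta_{ij}$ in \eqref{eq:HessLapSphere}, as opposed to no curvature term at all, is not established.

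\textbf{The repair} is to compute the commutator directly from \eqref{eq:curvconv}--\eqref{eq:spherecurv}, as the paper does:
\[
\sum_k(\nabla_k\nabla_i-\nabla_i\nabla_k)A_{kj}
=-\sum_kA\bigl(R(e_k,e_i)e_k,e_j\bigr)-\sum_kA\bigl(e_k,R(e_k,e_i)e_j\bigr)
=(n-1)A_{ij}+\bigl(A_{ij}-F\delta_{ij}\bigr).
\]
This uses $R(e_k,e_i)e_k=\delta_{ik}e_k-e_i$ and $R(e_k,e_i)e_j=\delta_{ij}e_k-\delta_{kj}e_i$.

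Alternatively, keep your test-function strategy but use a non-umbilic function. For example, take $v=\tfrac12\ell^2$ with $\ell=\langle x,a\rangle$. Then $A=d\ell\otimes d\ell-\ell^2g$ and $nA-Fg=n\,d\ell\otimes d\ell-|d\ell|^2g\neq0$, so the two candidate signs give different $d\ell\otimes d\ell$ coefficients.

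\textbf{A smaller slip.} Contracting \eqref{eq:CodazziSphere} at $k=i$ gives the trivial identity $0=0$. The identity $\sum_k\nabla_kA_{kj}=\nabla_jF+(n-1)v_j$ comes instead from setting $j=k$, summing, and using the symmetry of $A$.
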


\begin{proof}
The curvature identity for the covector $dv$ yields \eqref{eq:CodazziSphere}.  Contracting this identity gives
\[
\sum_k\nabla_kA_{kj}=\nabla_jF+(n-1)v_j.
\]
The curvature action on the two covariant slots of $A$ gives
\[
\sum_k(\nabla_k\nabla_i-\nabla_i\nabla_k)A_{kj}=nA_{ij}-F\delta_{ij}.
\]
Applying $\nabla_k$ to \eqref{eq:CodazziSphere} and summing gives
\[
(\Delta^\nabla A)_{ij}
=\nabla_i\nabla_jF+(n-1)A_{ij}+nA_{ij}-F\delta_{ij}+A_{ij}-F\delta_{ij},
\]
as asserted in \eqref{eq:HessLapSphere}. Finally,
\[
\nabla_iq=2\sum_kv_kA_{ik}.
\]
Differentiating once more and using
\[
\nabla_jA_{ik}=\nabla_kA_{ij}+v_k\delta_{ij}-v_j\delta_{ik}
\]
gives \eqref{eq:qSphere}.
\end{proof}

\begin{lemma}
\label{lem:SchurMaster}
Let $v\in C^4(U)$ satisfy
\[
\Hess v\ge0,
\qquad
F:=\Lap v=\Phi(v,q)>0,
\qquad q:=|\grad v|^2,
\]
where $\Phi\in C^2$.  Suppose that $\Hess v$ has a null direction at $x_0\in U$, and write
\[
r:=\rank\Hess v(x_0),\qquad 1\le r\le n-1.
\]
Then there is a $C^2$ nonnegative Schur-complement test function $\phi$ with $\phi(x_0)=0$ such that
\begin{equation}
\label{eq:MasterSchur}
\Delta\phi(x_0)
\le
\left[
\Phi_{vv}-2\Phi_q
-\frac{2}{F}(r+\Phi_v)^2
\right]v_1^2
+2\Phi_q q-2F.
\end{equation}
All derivatives of $\Phi$ are evaluated at $(v(x_0),q(x_0))$.
\end{lemma}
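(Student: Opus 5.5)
The plan is to take a frame at $x_0$ adapted to the kernel of $A=\Hess v$, choose as $\phi$ the Schur complement of the positive block of $A$ evaluated on one well-chosen null direction, and then compute $\Delta\phi(x_0)$ using Lemma~\ref{lem:sphereidentities} together with the first-order conditions at the minimum $\phi(x_0)=0$.

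\emph{Frame and test function.} At $x_0$ diagonalize $A$. The eigenvalues $\lambda_{1}',\dots$ on the range are positive; relabel so that the positive eigenvectors are $e_2,\dots,e_{r+1}$ with eigenvalues $\lambda_i>0$, and the kernel is spanned by $e_1$ and $e_{r+2},\dots,e_n$. Since $r\le n-1$, the kernel is nontrivial. Choose $e_1$ in the kernel along the projection of $\grad v(x_0)$ onto the kernel, or arbitrarily if that projection vanishes. Then $v_a(x_0)=0$ for every other null index $a$, and $v_1$ is the quantity appearing in \eqref{eq:MasterSchur}. Extend the frame by radial parallel transport near $x_0$. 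Write $A$ in block form relative to $\mathrm{span}\{e_2,\dots,e_{r+1}\}$ and its complement, with positive block $B$, off-diagonal block $C$ and remaining block $D$. Set
\[
\phi:=A_{11}-\sum_{i,j=2}^{r+1}A_{1i}(B^{-1})_{ij}A_{j1}.
\]
This is $C^2$ near $x_0$ because $B$ stays invertible, and $\phi\ge0$ because $A\ge0$. At $x_0$ we have $A_{1i}=0$, so $\phi(x_0)=0$, and a direct differentiation gives
\[
\Delta\phi(x_0)=(\Delta^\nabla A)_{11}-2\sum_k\sum_{i=2}^{r+1}\frac{(\nabla_kA_{1i})^2}{\lambda_i}.
\]

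\emph{First-order information.} For each null index $a$, the function $A(e_a,e_a)$ is nonnegative and vanishes at $x_0$. Hence $\nabla_kA_{aa}(x_0)=0$ for all $k$, and in particular $\nabla_kA_{11}=0$. Also $q_a=2\sum_kv_kA_{ak}=0$ at $x_0$ for every null $a$.

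\emph{The main term.} By \eqref{eq:HessLapSphere} with $A_{11}=0$,
\[
(\Delta^\nabla A)_{11}=\nabla_1\nabla_1F-2F.
\]
With $F=\Phi(v,q)$ and $q_1=0$, $A_{11}=0$, we get
\[
\nabla_1\nabla_1F=\Phi_{vv}v_1^2+\Phi_q\,\nabla_1\nabla_1q.
\]
By \eqref{eq:qSphere}, using $A_{1k}=0$ and $\nabla_kA_{11}=0$,
\[
\nabla_1\nabla_1q=2(q-v_1^2).
\]
This produces the terms $\Phi_{vv}v_1^2-2\Phi_qv_1^2+2\Phi_qq-2F$.

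\emph{The gradient term (the main obstacle).} The negative term is where the sign must be won. Keep only the diagonal pieces $k=i$, since the discarded pieces only help. By Cauchy--Schwarz,
\[
\sum_{i=2}^{r+1}\frac{(\nabla_iA_{1i})^2}{\lambda_i}\ge\frac{\bigl(\sum_i\nabla_iA_{1i}\bigr)^2}{\sum_i\lambda_i}=\frac{\bigl(\sum_i\nabla_iA_{1i}\bigr)^2}{F},
\]
where $\sum_i\lambda_i=\tr A=F>0$. Next evaluate the numerator with the Codazzi identity \eqref{eq:CodazziSphere} taken with $k=1$, $j=i$:
\[
\nabla_iA_{1i}=\nabla_1A_{ii}+v_1.
\]
Sum over $i$. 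The remaining diagonal entries are null, so their $\nabla_1$-derivatives vanish, and therefore
\[
\sum_i\nabla_1A_{ii}=\nabla_1F=\Phi_vv_1+\Phi_qq_1=\Phi_vv_1.
\]
Hence $\sum_i\nabla_iA_{1i}=(r+\Phi_v)v_1$. This yields the term $-\frac{2}{F}(r+\Phi_v)^2v_1^2$, and collecting everything gives \eqref{eq:MasterSchur}.

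The delicate points to verify carefully are two. First, the derivative formula for the Schur complement in the moving parallel frame, which should hold at $x_0$ because $C(x_0)=0$. Second, that the choice of $e_1$ kills the contributions of the other null directions to $\nabla_1F$ and $q$.
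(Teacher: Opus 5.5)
Your proposal is correct and takes essentially the paper's route. It uses the same Schur-complement test function $\phi=A_{11}-z^TM^{-1}z$ on the positive block, in a frame with $\nabla e_i(x_0)=0$, and the same exact formula for $\Delta\phi(x_0)$; it then evaluates $(\Delta^\nabla A)_{11}$ via \eqref{eq:HessLapSphere} and \eqref{eq:qSphere} and closes with weighted Cauchy--Schwarz fed by the Codazzi identity. The only deviation is choosing $e_1$ along the kernel projection of $\grad v(x_0)$, which is harmless but unnecessary: $q_1=0$ and $\nabla_kA_{aa}(x_0)=0$ hold for any null $e_1$, because $A(e_1,\cdot)=0$ at $x_0$ and each $A(e_a,e_a)$ attains its minimum there.
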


\begin{proof}
Choose an orthonormal basis at $x_0$ which diagonalizes $A:=\Hess v(x_0)$ and whose first vector $e_1$ is null.  Extend it to a smooth local orthonormal frame with $\nabla e_i(x_0)=0$.  Let $G$ be the set of positive eigenvalue indices and write
\[
a_\alpha=A_{\alpha\alpha}(x_0)>0\quad(\alpha\in G),
\qquad
\sum_{\alpha\in G}a_\alpha=F(x_0),
\qquad |G|=r.
\]
Since $A\ge0$, $A_{1j}(x_0)=0$ for every $j$.  Moreover, for every null diagonal direction $i$, the scalar function $A(e_i,e_i)$ has a local minimum zero at $x_0$, and hence
\begin{equation}
\label{eq:masterNullDeriv}
\nabla_kA_{ii}(x_0)=0\qquad\text{for all }k.
\end{equation}

In a neighborhood of $x_0$, let
\[
M=(A_{\alpha\beta})_{\alpha,\beta\in G},
\qquad z=(A_{\alpha1})_{\alpha\in G}.
\]
After shrinking the neighborhood, $M$ is positive definite.  Put
\[
b=M^{-1}z,
\qquad
V=e_1-\sum_{\alpha\in G}b_\alpha e_\alpha,
\qquad
\phi=A(V,V)=A_{11}-z^TM^{-1}z.
\]
Because $A\ge0$, one has $\phi\ge0$, while $\phi(x_0)=0$.  Hence
\begin{equation}
\label{eq:masterMin}
\Delta\phi(x_0)\ge0.
\end{equation}
At $x_0$, $b=0$, $V=e_1$, and
\[
\nabla_kV=-\sum_{\alpha\in G}\frac{\nabla_kA_{\alpha1}}{a_\alpha}e_\alpha.
\]
The tensor product rule gives
\[
\begin{aligned}
\Delta(A(V,V))={}&(\Delta^\nabla A)(V,V)
+4\sum_k(\nabla_kA)(\nabla_kV,V)\\
&+2\sum_kA(\nabla_kV,\nabla_kV)
+2A(\Delta^\nabla V,V).
\end{aligned}
\]
The last term vanishes at $x_0$ because $A(e_1,\cdot)=0$, and the middle two terms combine to yield the exact identity
\begin{equation}
\label{eq:MasterSchurExact}
\Delta\phi(x_0)
=(\Delta^\nabla A)_{11}
-2\sum_{\alpha\in G}\sum_k\frac{(\nabla_kA_{1\alpha})^2}{a_\alpha}.
\end{equation}
In particular, this computation does not require differentiating eigenvectors or assuming locally constant rank.

At the null direction, Lemma~\ref{lem:sphereidentities} and \eqref{eq:masterNullDeriv} give
\[
q_1=0,
\qquad
q_{11}=2(q-v_1^2).
\]
Since $v_{11}=A_{11}=0$,
\[
F_1=\Phi_vv_1,
\qquad
F_{11}=\Phi_{vv}v_1^2+2\Phi_q(q-v_1^2).
\]
Therefore \eqref{eq:HessLapSphere} gives
\begin{equation}
\label{eq:MasterDeltaA}
(\Delta^\nabla A)_{11}
=(\Phi_{vv}-2\Phi_q)v_1^2+2\Phi_q q-2F.
\end{equation}
On the other hand, \eqref{eq:CodazziSphere} implies
\[
\nabla_\alpha A_{1\alpha}=\nabla_1A_{\alpha\alpha}+v_1.
\]
Summing over $\alpha\in G$, using \eqref{eq:masterNullDeriv} in the null diagonal directions, and differentiating $F=\tr A$ gives
\[
\sum_{\alpha\in G}\nabla_\alpha A_{1\alpha}
=F_1+rv_1=(r+\Phi_v)v_1.
\]
Weighted Cauchy--Schwarz yields
\[
\sum_{\alpha\in G}\sum_k\frac{(\nabla_kA_{1\alpha})^2}{a_\alpha}
\ge
\frac{(r+\Phi_v)^2}{F}v_1^2.
\]
Substitution into \eqref{eq:MasterSchurExact}, together with \eqref{eq:MasterDeltaA}, proves \eqref{eq:MasterSchur}.
\end{proof}

\begin{remark}
\label{rem:arbitrarynullity}
The proof involves only a fixed positive-definite principal submatrix at $x_0$.  It does not assume that the rank of $\Hess v$ is locally constant, that its eigenvalues can be smoothly labeled, or that the null eigenvalue is simple.
\end{remark}

\subsection{Inverse-power transforms}
\label{sec:LEstrict}

\begin{proposition}
\label{prop:generalPowerSchur}
Let $u>0$ satisfy $-\Lap u=\lambda u^p$ on an open
set $U\subset\Sn$, where $p>1$, $\lambda>0$, and let
$\beta>0$ be arbitrary. Set
\begin{equation}\label{eq:generalPowerNotation}
v=u^{-\beta},\qquad
c=1+\frac1\beta,\qquad d=\lambda\beta,\qquad
k=1-\frac{p-1}{\beta},\qquad q=|\grad v|^2.
\end{equation}
Then
\begin{equation}\label{eq:generalPowerPDE}
F:=\Lap v=\frac{c q}{v}+d v^k>0.
\end{equation}
Suppose $\Hess v\ge0$ on $U$ and $\Hess v(x_0)$ has a
null direction of rank $r\in\{1,\ldots,n-1\}$.
The nonnegative Schur-complement function $\phi$ in
Lemma~\ref{lem:SchurMaster} satisfies at $x_0$
\begin{equation}\label{eq:generalPowerSchur}
{\begin{aligned}
\Lap\phi\le{}&
\left[\frac{4r-2c}{v}
+d(k+1)(k+2)v^{k-2}
-\frac2F\left(r+d(k+1)v^{k-1}\right)^2\right]v_1^2
-2dv^k.
\end{aligned}}
\end{equation}
\end{proposition}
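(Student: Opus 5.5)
Our plan is to derive the PDE \eqref{eq:generalPowerPDE} for $v$ and then apply Lemma~\ref{lem:SchurMaster} with $\Phi(v,q)=cq/v+dv^k$, computing the partial derivatives of $\Phi$ explicitly.

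\textbf{Step 1: the equation for $v$.} Since $u=v^{-1/\beta}$, we have $\grad u=-\beta^{-1}v^{-1/\beta-1}\grad v$. The chain rule then gives
\[
\Lap u=-\tfrac1\beta v^{-1/\beta-1}\Lap v+\tfrac1\beta\bigl(\tfrac1\beta+1\bigr)v^{-1/\beta-2}q .
\]
Setting this equal to $-\lambda u^p=-\lambda v^{-p/\beta}$ and multiplying by $-\beta v^{1/\beta+1}$ yields
\[
\Lap v=(1+\tfrac1\beta)\tfrac qv+\lambda\beta\, v^{1+(1-p)/\beta}=\tfrac{cq}{v}+dv^k .
\]
Positivity of $F$ is immediate, since $v>0$, $c>0$ and $d>0$.

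\textbf{Step 2: derivatives of $\Phi$.} With $\Phi=cq/v+dv^k$ we compute
\[
\Phi_q=\tfrac cv,\qquad \Phi_v=-\tfrac{cq}{v^2}+dkv^{k-1},\qquad \Phi_{vv}=\tfrac{2cq}{v^3}+dk(k-1)v^{k-2}.
\]
Substituting into \eqref{eq:MasterSchur} gives
\[
\Delta\phi\le\Bigl[\tfrac{2cq}{v^3}+dk(k-1)v^{k-2}-\tfrac{2c}{v}-\tfrac2F(r+\Phi_v)^2\Bigr]v_1^2+\tfrac{2cq}{v}-2F .
\]
Here $\tfrac{2cq}{v}-2F=-2dv^k$, which is already the constant term in \eqref{eq:generalPowerSchur}.

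\textbf{Step 3: removing $q$ from the bracket (the main point).} The target bracket contains no $q$, so we eliminate it using $F$. Write $r+\Phi_v=(r+d(k+1)v^{k-1})-F/v$, which follows from $\Phi_v=-\tfrac1v(F-dv^k)+dkv^{k-1}$. Set $S:=r+d(k+1)v^{k-1}$. Expanding,
\[
-\tfrac2F(r+\Phi_v)^2=-\tfrac2FS^2+\tfrac{4S}{v}-\tfrac{2F}{v^2}.
\]
Likewise $\tfrac{2cq}{v^3}=\tfrac{2}{v^2}(F-dv^k)$, so the terms $\pm 2F/v^2$ cancel. The remaining terms in the bracket are
\[
-\tfrac{2d}{v^{2-k}}+dk(k-1)v^{k-2}-\tfrac{2c}{v}+\tfrac{4r}{v}+4d(k+1)v^{k-2}-\tfrac2FS^2 .
\]
The $v^{k-2}$ coefficient is $d\bigl(k^2-k-2+4k+4\bigr)=d(k+1)(k+2)$. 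This matches \eqref{eq:generalPowerSchur}, and the proposition in fact holds with equality in the substitution step.

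The only delicate point is spotting this regrouping of $r+\Phi_v$ around $F/v$, which makes the $q$-dependence cancel exactly. The rest is bookkeeping, including checking that $\Phi\in C^2$ on $v>0$ so that Lemma~\ref{lem:SchurMaster} applies.
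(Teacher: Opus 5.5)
Your proposal is correct and follows essentially the same route as the paper: you apply Lemma~\ref{lem:SchurMaster} with $\Phi=cq/v+dv^k$ and rewrite $r+\Phi_v=S-F/v$ with $S=r+d(k+1)v^{k-1}$. Expanding the square then cancels the $2F/v^2$ terms and collapses the $v^{k-2}$ coefficient to $d(k+1)(k+2)$, exactly as in the paper. The only cosmetic difference is that the paper rewrites $\Phi_{vv}$ as $2F/v^2+d(k+1)(k-2)v^{k-2}$ before substituting, whereas you substitute $2cq/v^3=2(F-dv^k)/v^2$ inside the bracket.
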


\begin{proof}
Differentiating $v=u^{-\beta}$ twice and inserting
$-\Lap u=\lambda u^p$ yields \eqref{eq:generalPowerPDE}.
In Lemma~\ref{lem:SchurMaster} take
$\Phi(v,q)=cq/v+dv^k$. Then
\[
\Phi_q=\frac cv,\qquad
\Phi_v=-\frac{cq}{v^2}+dkv^{k-1}
=-\frac Fv+d(k+1)v^{k-1},
\]
and
\[
\Phi_{vv}=\frac{2cq}{v^3}+dk(k-1)v^{k-2}
=\frac{2F}{v^2}+d(k+1)(k-2)v^{k-2}.
\]
The zeroth-order term of \eqref{eq:MasterSchur} is
\[
2\Phi_q q-2F=-2dv^k.
\]
Writing $T=r+d(k+1)v^{k-1}$, the coefficient of $v_1^2$
in that lemma becomes
\[
\frac{2F}{v^2}-\frac{2c}{v}
+d(k+1)(k-2)v^{k-2}
-\frac2F\left(T-\frac Fv\right)^2.
\]
Expanding the last square cancels the terms $2F/v^2$;
the remaining terms are
\[
\frac{4r-2c}{v}
+d(k+1)(k+2)v^{k-2}-\frac{2T^2}{F},
\]
which proves \eqref{eq:generalPowerSchur}.
\end{proof}

\begin{remark}
\label{rem:generalPowerExponent}
When $\beta=(p-1)/3$, one has $k=-2$; hence
$(k+1)(k+2)=0$ and the potentially positive middle term in
\eqref{eq:generalPowerSchur} disappears \emph{identically}.
The remaining rank term is nonpositive whenever
$c\ge2(n-1)$, or equivalently
\[
\beta\le\frac{1}{2n-3}.
\]
For $\beta=(p-1)/3$ this gives precisely
$p\le2n/(2n-3)$. This explains the chosen inverse-power
transform and the endpoint of the present sufficient sign criterion;
it does not establish the optimal exponent for the original PDE.
\end{remark}

\begin{lemma}
\label{lem:Schur}
Let $U\subset\Sn$ be open, $\lambda>0$, and let $u\in C^4(U)$ be positive and satisfy
\begin{equation}
\label{eq:lambdaLE}
-\Lap u=\lambda u^p.
\end{equation}
Assume
\[
1<p\le\frac{2n}{2n-3}.
\]
Define
\[
\beta:=\frac{p-1}{3},\qquad v:=u^{-\beta}.
\]
If $\Hess v\ge0$ in $U$, then
\[
\Hess v>0\qquad\text{in }U.
\]
\end{lemma}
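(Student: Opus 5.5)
The plan is to argue by contradiction, pointwise. Suppose $\Hess v\ge0$ in $U$ but $\Hess v(x_0)$ has a null direction at some $x_0\in U$. Since $v=u^{-\beta}$ with $u\in C^4$ positive, we have $v\in C^4(U)$. By \eqref{eq:generalPowerPDE}, $F=\Lap v=cq/v+dv^k>0$, where $d=\lambda\beta>0$. Because $\Hess v(x_0)\ge0$ has positive trace, its rank $r$ is at least $1$. Because it has a null direction, $r\le n-1$. So the hypotheses of Lemma~\ref{lem:SchurMaster}, in the form of Proposition~\ref{prop:generalPowerSchur}, are met. These give a $C^2$ function $\phi\ge0$ near $x_0$ with $\phi(x_0)=0$. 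Hence $x_0$ is a local minimum of $\phi$, and $\Lap\phi(x_0)\ge0$ as in \eqref{eq:masterMin}.

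Next I specialize \eqref{eq:generalPowerSchur} to $\beta=(p-1)/3$, so that $k=1-(p-1)/\beta=-2$, as in Remark~\ref{rem:generalPowerExponent}. The middle term $d(k+1)(k+2)v^{k-2}$ vanishes identically, and the estimate becomes
\[
\Lap\phi(x_0)\le\Bigl[\frac{4r-2c}{v}-\frac2F\bigl(r-dv^{-3}\bigr)^2\Bigr]v_1^2-2dv^{-2}.
\]
The squared term is nonpositive because $F>0$.

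The key sign check concerns the rank term. We have $c=1+1/\beta=1+3/(p-1)$, and the hypothesis $p\le 2n/(2n-3)$ is equivalent to $3/(p-1)\ge 2n-3$. This gives $c\ge 2n-2\ge 2r$, using $r\le n-1$, so $4r-2c\le0$. The whole bracket is therefore nonpositive, and since $d>0$ and $v>0$ we get
\[
\Lap\phi(x_0)\le -2dv(x_0)^{-2}<0.
\]
This contradicts $\Lap\phi(x_0)\ge0$. So no point of $U$ carries a null direction, and $\Hess v>0$ throughout $U$.

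I expect no serious obstacle, since the computational work is already contained in Proposition~\ref{prop:generalPowerSchur}. The only points needing care are the following.
\begin{itemize}
\item The rank bound $r\le n-1$ is what allows the sharp use of $c\ge2(n-1)$, rather than the weaker $c\ge 2n$ one would need for $r=n$.
\item The zeroth-order term $-2dv^{-2}$ is strictly negative, so the contradiction holds even if $v_1(x_0)=0$.
\item The Schur-complement function $\phi$ is only defined on a neighborhood of $x_0$ where the principal block $M$ stays positive definite. Since the minimum argument is local, this suffices.
\end{itemize}
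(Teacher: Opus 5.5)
Your proposal is correct and follows the paper's proof essentially verbatim: you specialize Proposition~\ref{prop:generalPowerSchur} to $\beta=(p-1)/3$ (so $k=-2$ and the middle term drops out), use $c=(p+2)/(p-1)\ge 2(n-1)\ge 2r$ to make the bracket nonpositive, and contradict \eqref{eq:masterMin} via the strictly negative term $-2dv^{-2}$. Your explicit check that $r\ge1$, because $\operatorname{tr}\Hess v=F>0$, is a detail the paper leaves implicit.
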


\begin{proof}
Direct differentiation gives
\begin{equation}
\label{eq:vPDEgeneral}
F:=\Lap v
=c\frac qv+d\,v^{-2},
\qquad q:=|\grad v|^2,
\qquad
c:=\frac{p+2}{p-1},
\qquad
d:=\frac{p-1}{3}\lambda>0.
\end{equation}
Thus $F>0$. Apply Proposition~\ref{prop:generalPowerSchur}
with $\beta=(p-1)/3$ and $k=-2$. The middle term
$d(k+1)(k+2)v^{k-2}$ vanishes and
\eqref{eq:generalPowerSchur} reduces to
\begin{equation}
\label{eq:SchurCoreFinal}
\Delta\phi(x_0)
\le
\left[
\frac{4r-2c}{v}
-\frac2F\left(r-dv^{-3}\right)^2
\right]v_1^2
-2dv^{-2}.
\end{equation}
The restriction on $p$ is equivalent to
\[
c=\frac{p+2}{p-1}\ge2(n-1)\ge2r.
\]
Hence every term on the right of \eqref{eq:SchurCoreFinal} is nonpositive except the last, which is strictly negative.  This contradicts \eqref{eq:masterMin}.
\end{proof}

\section{Global strict convexity for the Lane--Emden equation}
\label{sec:LEnear}

\subsection{The near-linear limit}
\label{sec:near1}

Let $\lambda_1$ and $\phi_1>0$ denote the first Dirichlet
eigenvalue and eigenfunction of $-\Lap$ on $\Omega$.
We use the strict spherical log-concavity statement
\begin{equation}\label{eq:LW}
\Hess(\log\phi_1)<0\qquad\text{in }\Omega.
\end{equation}
Spherical log-concavity was established by Lee--Wang
\cite{LeeWang1987}; the strict formulation under the present
smooth strict geodesic convexity assumptions is explicitly stated
in \cite[Proposition~2.1]{GrossiProvenzanoRaom2026}.
The other external tools (whole-space and half-space Liouville
theorems and the Leray--Schauder continuation theorem) are cited
with their relevant hypotheses at the points where they enter
the proofs below. Standard boundary regularity and maximum
principles are used in the forms given in \cite{GilbargTrudinger}.

The first step is to identify the near-linear limit of normalized positive solutions. This leads to uniqueness and nondegeneracy for $p>1$ sufficiently close to $1$.

Normalize the first eigenfunction by
\begin{equation}
\label{eq:L2normphi}
\int_\Omega\phi_1^2\,dV_g=1,
\qquad \phi_1>0.
\end{equation}

\begin{lemma}
\label{lem:maxnorm}
Let $p_j>1$, $p_j\to1$, and let $u_j$ be arbitrary positive solutions of
\[
-\Lap u_j=u_j^{p_j},\qquad u_j|_{\partial\Omega}=0.
\]
Set
\[
M_j:=\|u_j\|_{L^\infty(\Omega)},\qquad
z_j:=\frac{u_j}{M_j},\qquad
\Lambda_j:=M_j^{p_j-1}.
\]
Then
\begin{equation}
\label{eq:LambdaNear1}
\Lambda_j\to\lambda_1,
\end{equation}
and, if $m_1:=\|\phi_1\|_{L^\infty(\Omega)}$,
\begin{equation}
\label{eq:zNear1}
z_j\to\frac{\phi_1}{m_1}
\qquad\text{in }C^{2,\alpha}(\overline\Omega)
\end{equation}
for each fixed $0<\alpha<1$.
\end{lemma}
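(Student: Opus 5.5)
We prove the lemma by normalizing and passing to the limit. The normalized function $z_j$ satisfies
\[
-\Lap z_j=\Lambda_j z_j^{p_j},\qquad 0<z_j\le1=\max z_j,\qquad z_j|_{\partial\Omega}=0.
\]
The plan has three parts: first bound $\Lambda_j$ above and below, then extract a convergent subsequence by elliptic estimates, and finally identify the limit by positivity.

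\emph{Bounds on $\Lambda_j$.} We test the equation against $\phi_1$ and integrate by parts, which gives
\[
\lambda_1\int_\Omega z_j\phi_1=\Lambda_j\int_\Omega z_j^{p_j}\phi_1 .
\]
Since $0<z_j\le1$ and $p_j>1$, we have $z_j^{p_j}\le z_j$. Hence $\Lambda_j\ge\lambda_1$.

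For the upper bound we use the maximum point $x_j$ of $z_j$. There $-\Lap z_j(x_j)=\Lambda_j\ge0$, but this alone bounds nothing. Instead we argue by contradiction: suppose $\Lambda_j\to\infty$ along a subsequence, and rescale around $x_j$ at scale $\Lambda_j^{-1/2}$. The rescaled functions $w_j$ satisfy $-\Delta w_j=w_j^{p_j}$ on expanding domains, with $w_j\le1$ and $w_j(0)=1$. By uniform convexity and smoothness of $\partial\Omega$, the boundary distance either stays comparable to the scale or blows up. In the limit we get a bounded nonnegative solution of $-\Delta w=w$ on $\R^n$ or on a half-space (zero on the boundary), with $w(0)=1$.

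A positive, or nonnegative nontrivial, bounded solution of $-\Delta w=w$ cannot exist. To see this, compare with a Dirichlet eigenfunction of a ball of radius $R$ whose first eigenvalue is less than $1$. Sturm comparison via Green's identity on that ball gives a contradiction, provided $w>0$ inside; strong maximum principle gives $w>0$, since $w\ge0$, $w(0)=1$ and $-\Delta w=w\ge0$. The same works on a half-space, taking a ball inside it. This blow-up step is the main obstacle: we need the boundary $C^{2,\alpha}$ estimates to be uniform in $j$, which follows from $0\le z_j\le1$ and the boundedness of $\Lambda_j z_j^{p_j}$ at each fixed scale.

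\emph{Compactness and identification.} Once $\Lambda_j$ is bounded, the right-hand side $\Lambda_j z_j^{p_j}$ is bounded in $L^\infty$. Then $W^{2,q}$ estimates, followed by Schauder bootstrapping, bound $z_j$ in $C^{2,\alpha'}(\overline\Omega)$ for some $\alpha'>\alpha$; for the Hölder norm of $z_j^{p_j}$ we use that $p_j\to1$ and $z_j$ is Lipschitz uniformly. By Arzelà--Ascoli, a subsequence satisfies $z_j\to z$ in $C^{2,\alpha}$ and $\Lambda_j\to\Lambda\in[\lambda_1,\infty)$. Since $z_j^{p_j}\to z$ uniformly (using $|s^{p}-s|\le (p-1)\sup_{s\in[0,1]}s|\log s|$), the limit satisfies
\[
-\Lap z=\Lambda z,\qquad z\ge0,\qquad \max z=1,\qquad z|_{\partial\Omega}=0.
\]
A nonnegative nontrivial Dirichlet eigenfunction must be the first one. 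Thus $\Lambda=\lambda_1$ and $z=c\phi_1$, and $\max z=1$ forces $z=\phi_1/m_1$. Uniqueness of this limit implies that the whole sequence converges, which proves \eqref{eq:LambdaNear1} and \eqref{eq:zNear1}.
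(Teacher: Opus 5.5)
Your proposal is correct and follows essentially the same route as the paper. The lower bound $\Lambda_j\ge\lambda_1$ comes from testing against $\phi_1$, and the upper bound from blow-up at scale $\Lambda_j^{-1/2}$ with the interior/half-space dichotomy and the ball-eigenfunction contradiction. This is followed by $W^{2,q}$--Schauder compactness and the identification of the limit as $\phi_1/m_1$.

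One point is left implicit: in the half-space case, the rescaled maximum point must stay at positive distance from the flat boundary. As in the paper's Lemma~\ref{lem:quantBoundaryBlowup}, this follows from the uniform boundary $C^1$ bounds you invoke. Integrating the normal derivative from the nearest boundary point gives $1\le C\,d_j/\rho_j$.
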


\begin{proof}
The normalized function satisfies
\begin{equation}
\label{eq:zEq}
-\Lap z_j=\Lambda_j z_j^{p_j},\qquad 0<z_j\le1,\qquad \max_\Omega z_j=1.
\end{equation}
Testing against $\phi_1$ gives
\[
\lambda_1\int_\Omega z_j\phi_1
=\Lambda_j\int_\Omega z_j^{p_j}\phi_1.
\]
Since $0<z_j\le1$ and $p_j>1$, $z_j^{p_j}\le z_j$, hence
\begin{equation}
\label{eq:LambdaLower}
\Lambda_j\ge\lambda_1.
\end{equation}

We first show that $(\Lambda_j)$ is bounded above.  Otherwise, after passing to a subsequence,
\[
\Lambda_j\to+\infty.
\]
Let $x_j$ be a maximum point of $z_j$ and put
\[
\rho_j:=\Lambda_j^{-1/2}\to0,
\qquad d_j:=\dist(x_j,\partial\Omega).
\]
After passing to a subsequence, either $d_j/\rho_j\to+\infty$ or $d_j/\rho_j$ remains bounded.

In the first case, exponential coordinates at $x_j$ and the rescaling $x=\exp_{x_j}(\rho_j y)$ yield metrics converging smoothly on compact sets to the Euclidean metric and functions $Z_j$ satisfying
\[
-\Delta_{g_j}Z_j=Z_j^{p_j},\qquad 0\le Z_j\le1,\qquad Z_j(0)=1.
\]
Interior elliptic estimates and a diagonal argument give a bounded positive limit $Z$ solving
\begin{equation}
\label{eq:linearRnapprox}
-\Delta Z=Z\qquad\text{in }\R^n,
\qquad Z(0)=1.
\end{equation}
This is impossible.  Indeed, choose a Euclidean ball $B_R$ with first Dirichlet eigenvalue $\mu_1(B_R)<1$, and let $\varphi_R>0$ be its first eigenfunction.  Green's identity gives
\[
(1-\mu_1(B_R))\int_{B_R}Z\varphi_R
=\int_{\partial B_R}Z\,\partial_\nu\varphi_R<0,
\]
whereas the left-hand side is positive.

In the second case, use boundary Fermi coordinates centered at a nearest boundary point and rescale by $\rho_j$.  Standard boundary estimates yield a limit on a half-space,
\[
-\Delta Z=Z\quad\text{in }\R^n_+,
\qquad Z=0\quad\text{on }\partial\R^n_+,
\qquad 0\le Z\le1.
\]
Lemma~\ref{lem:quantBoundaryBlowup} shows quantitatively that $d_j/\rho_j\ge c>0$: the rescaled Dirichlet solutions satisfy uniform boundary $C^1$ estimates and have value one at the maximum, so they cannot approach the zero boundary.  Hence the limiting maximum point is interior, and $Z>0$ by the strong maximum principle.  Since $\R^n_+$ contains Euclidean balls of arbitrarily large radius, the preceding first-eigenfunction argument on such a ball again contradicts $-\Delta Z=Z$.  Thus $(\Lambda_j)$ is bounded.

Pass to a subsequence with $\Lambda_j\to\Lambda_*\ge\lambda_1$.
To establish convergence up to the boundary, fix any finite $q>n$.
the common bounds $0\le z_j\le1$ and $\Lambda_j\le C$ in
\eqref{eq:zEq}, together with the global Dirichlet $W^{2,q}$ estimate,
give
\[
\|z_j\|_{W^{2,q}(\Omega)}\le C_q.
\]
For any prescribed $0<\alpha<1$, choose $q$ large enough that
$\gamma:=1-n/q>\alpha$. The Sobolev--Morrey embedding gives a uniform
$C^{1,\gamma}(\overline\Omega)$ bound. For large $j$ we have
$1<p_j\le2$, and $s\mapsto s^{p_j}$ is Lipschitz on $[0,1]$ with
constant at most $2$. Consequently
\[
\|\Lambda_jz_j^{p_j}\|_{C^{0,\gamma}(\overline\Omega)}\le C.
\]
The global Schauder estimate now yields
$\|z_j\|_{C^{2,\gamma}(\overline\Omega)}\le C$; compact embedding gives
convergence in $C^{2,\alpha}(\overline\Omega)$ along a subsequence.
This argument works for any fixed $\alpha<1$. Hence
\[
z_j\to z_*,\qquad -\Lap z_*=\Lambda_*z_*,\qquad z_*\ge0,\qquad \max z_*=1.
\]
The strong maximum principle gives $z_*>0$ in $\Omega$.  A positive Dirichlet eigenfunction can only correspond to $\lambda_1$, so $\Lambda_*=\lambda_1$ and $z_*=\phi_1/m_1$.  Since every convergent subsequence has the same limit, the whole sequence converges.
\end{proof}

\begin{corollary}
\label{cor:intnorm}
For $u_j$ as in Lemma~\ref{lem:maxnorm}, define
\[
a_j:=\int_\Omega u_j\phi_1\,dV_g,
\qquad w_j:=\frac{u_j}{a_j},
\qquad \lambda_j:=a_j^{p_j-1}.
\]
Then
\begin{equation}
\label{eq:wnorm}
\int_\Omega w_j\phi_1=1,
\qquad -\Lap w_j=\lambda_jw_j^{p_j},
\end{equation}
and
\begin{equation}
\label{eq:wlambdaNear1}
w_j\to\phi_1\quad\text{in }C^{2,\alpha}(\overline\Omega),
\qquad \lambda_j\to\lambda_1.
\end{equation}
\end{corollary}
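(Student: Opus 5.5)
The identities in \eqref{eq:wnorm} are immediate, so the plan is to check them quickly and then transfer Lemma~\ref{lem:maxnorm} from the sup-normalization to the integral normalization; the only real work is identifying the limits of the normalizing constants.

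\textbf{The identities.} By definition of $a_j$, $\int_\Omega w_j\phi_1=a_j^{-1}\int_\Omega u_j\phi_1=1$. Homogeneity gives
\[
-\Lap w_j=a_j^{-1}u_j^{p_j}=a_j^{p_j-1}w_j^{p_j}=\lambda_jw_j^{p_j}.
\]

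\textbf{Relating the two normalizations.} With $M_j,z_j,\Lambda_j$ as in Lemma~\ref{lem:maxnorm}, write $a_j=M_j\theta_j$, where $\theta_j:=\int_\Omega z_j\phi_1$. Then $w_j=z_j/\theta_j$. The convergence \eqref{eq:zNear1} (uniform convergence suffices) together with \eqref{eq:L2normphi} gives
\[
\theta_j\to m_1^{-1}\int_\Omega\phi_1^2=m_1^{-1}>0.
\]
Hence $w_j\to m_1\cdot\phi_1/m_1=\phi_1$ in $C^{2,\alpha}(\overline\Omega)$. This uses only that $C^{2,\alpha}$ convergence is preserved under multiplication by scalars converging to a positive limit.

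\textbf{The constants.} Next, $\lambda_j=a_j^{p_j-1}=\Lambda_j\,\theta_j^{p_j-1}$. Since $\theta_j$ lies in a compact subset of $(0,\infty)$ and $p_j-1\to0$, we get $\theta_j^{p_j-1}=\exp((p_j-1)\log\theta_j)\to1$. Combined with \eqref{eq:LambdaNear1}, this yields $\lambda_j\to\lambda_1$.

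\textbf{Main obstacle.} There is little difficulty here. The one point to watch is that $M_j$ itself may blow up or degenerate as $p_j\to1$, so one must not try to control $a_j$ directly. Instead, one works only with the ratio $\theta_j$, which is bounded above and away from zero. That boundedness is exactly what \eqref{eq:zNear1} and the positivity of $\int_\Omega\phi_1^2$ provide.
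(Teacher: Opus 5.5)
Your proposal is correct and follows the paper's proof essentially verbatim: both factor $a_j=M_j\int_\Omega z_j\phi_1$, use \eqref{eq:zNear1} and \eqref{eq:L2normphi} to get $a_j/M_j\to 1/m_1$, deduce $w_j=(M_j/a_j)z_j\to\phi_1$, and write $\lambda_j=(a_j/M_j)^{p_j-1}\Lambda_j\to\lambda_1$. Your explicit check of \eqref{eq:wnorm} and of $\theta_j^{p_j-1}\to1$ fills in steps the paper leaves implicit.
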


\begin{proof}
By Lemma~\ref{lem:maxnorm},
\[
\frac{a_j}{M_j}
=\int_\Omega z_j\phi_1
\longrightarrow \frac1{m_1}\int_\Omega\phi_1^2
=\frac1{m_1}.
\]
Thus $M_j/a_j\to m_1$, so $w_j=(M_j/a_j)z_j\to\phi_1$.  Moreover
\[
\lambda_j=a_j^{p_j-1}
=\left(\frac{a_j}{M_j}\right)^{p_j-1}\Lambda_j\to\lambda_1.
\]
\end{proof}

\subsection{Near-linear uniqueness and nondegeneracy}

\begin{lemma}
\label{lem:existence}
For every $1<p<p_S(n)$, problem \eqref{eq:mainLE} has at least one positive solution.
\end{lemma}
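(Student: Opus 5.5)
The plan is the standard variational one, via the mountain pass theorem on $H_0^1(\Omega)$. Here $p_S(n)$ denotes the Sobolev exponent, $p_S(n)=(n+2)/(n-2)$ for $n\ge3$ and $p_S(2)=+\infty$. Consider the functional
\[
I(u)=\frac12\int_\Omega|\grad u|^2\,dV_g-\frac1{p+1}\int_\Omega (u^+)^{p+1}\,dV_g,\qquad u\in H_0^1(\Omega).
\]
Since $\overline\Omega$ is a smooth compact subset of a chart domain, the Sobolev embedding $H_0^1(\Omega)\hookrightarrow L^{p+1}(\Omega)$ is compact for $p+1<2^*$ (any finite exponent when $n=2$), exactly as in the Euclidean case. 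Hence $I\in C^1$.

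\textbf{Mountain pass geometry.} By Poincaré and Sobolev,
\[
I(u)\ge \tfrac12\|u\|^2-C\|u\|^{p+1},
\]
so $I\ge \rho>0$ on a small sphere $\|u\|=r$. Along the ray $t\phi_1$ we have $I(t\phi_1)\to-\infty$ as $t\to\infty$ because $p+1>2$.

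\textbf{Palais--Smale condition.} Let $(u_k)$ be a PS sequence. The Ambrosetti--Rabinowitz computation
\[
I(u_k)-\tfrac1{p+1}I'(u_k)u_k=\Bigl(\tfrac12-\tfrac1{p+1}\Bigr)\|u_k\|^2
\]
bounds $\|u_k\|$. Compactness of the embedding then gives strong convergence of a subsequence, since $-\Delta_g$ is an isomorphism $H_0^1\to H^{-1}$. The mountain pass theorem produces a critical point $u\ne0$ with $I(u)\ge\rho$.

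\textbf{Positivity and regularity.} Test $I'(u)=0$ against $u^-$. This gives $\int|\grad u^-|^2=0$, so $u\ge0$ and $-\Delta_g u=u^p$ weakly. Next we bootstrap: Brezis--Kato/Moser iteration (subcritical growth) gives $u\in L^\infty$. Then $W^{2,q}$ estimates yield $u\in C^{1,\gamma}(\overline\Omega)$, and since $s\mapsto s^p$ is $C^{0,\gamma}$ on bounded sets for $p>1$, Schauder estimates give $u\in C^{2,\gamma}(\overline\Omega)$. Finally $u\not\equiv0$, and the strong maximum principle applied to $-\Delta_g u=u^p\ge0$ gives $u>0$ in $\Omega$.

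\textbf{Main difficulty.} I expect the only real care needed is the $L^\infty$ bootstrap near the critical exponent. This is handled by subcriticality: $u^{p-1}\in L^{n/2+\epsilon}$, so Moser iteration applies. On the sphere everything is local on a compact set with bounded geometry, so no new issue arises. Uniform convexity of $\Omega$ is not used here.
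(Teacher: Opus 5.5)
Your argument is correct, but it takes a different variational route from the paper. The paper minimizes the Sobolev-type quotient $\int_\Omega|\grad v|^2/(\int_\Omega|v|^{p+1})^{2/(p+1)}$ over $H_0^1(\Omega)\setminus\{0\}$. By the direct method, compactness of $H_0^1\hookrightarrow L^{p+1}$ gives a minimizer. Replacing $v$ by $|v|$ makes it nonnegative. The Euler--Lagrange equation is $-\Lap v=\kappa v^p$ with a multiplier $\kappa>0$, and the homogeneity of the pure power removes $\kappa$ through the scaling $u=\kappa^{1/(p-1)}v$.

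That route needs no minimax theorem, no Palais--Smale verification and no truncation of the nonlinearity. It does, however, depend essentially on $f(u)=u^p$ being homogeneous.

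Your mountain-pass argument works with the unconstrained functional built from $(u^+)^{p+1}$. It needs the deformation machinery and the Ambrosetti--Rabinowitz bound, and it gets nonnegativity by testing with $u^-$ rather than passing to $|v|$. In exchange, it uses no scaling at all. It would therefore apply verbatim to non-homogeneous subcritical superlinear reactions that satisfy an Ambrosetti--Rabinowitz condition; the inhibited reaction of Section~\ref{sec:nonpower} is one example, though the paper obtains that case by continuation instead.

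For the pure power, both constructions produce a least-energy solution. Your explicit Brezis--Kato/$W^{2,q}$/Schauder bootstrap fills in what the paper summarizes as ``standard elliptic regularity.''
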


\begin{proof}
The embedding $H_0^1(\Omega)\hookrightarrow L^{p+1}(\Omega)$ is compact.  Minimize
\[
\frac{\int_\Omega|\grad v|^2}{\left(\int_\Omega|v|^{p+1}\right)^{2/(p+1)}}
\]
over nonzero $v\in H_0^1(\Omega)$.  A minimizer may be chosen nonnegative and satisfies
\[
-\Lap v=\kappa v^p
\]
for some $\kappa>0$.  The strong maximum principle gives $v>0$, and scaling $u=\kappa^{1/(p-1)}v$ gives \eqref{eq:mainLE}.  Standard elliptic regularity yields a classical solution.
\end{proof}

\begin{proposition}
\label{prop:near1unique}
There exists $\varepsilon_0>0$ such that for every
\[
1<p<1+\varepsilon_0
\]
problem \eqref{eq:mainLE} has exactly one positive solution.
\end{proposition}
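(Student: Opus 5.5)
Existence for $p$ close to $1$ is already given by Lemma~\ref{lem:existence}, since $p_S(n)>1$. For uniqueness we argue by contradiction. Suppose there are $p_j\to1$ and two distinct positive solutions $u_j\ne\tilde u_j$ of \eqref{eq:mainLE} with $p=p_j$. We normalize both by the same scalar, so that the equation stays homogeneous in a single unknown. Set $a_j=\int_\Omega u_j\phi_1$, $w_j=u_j/a_j$, $\tilde w_j=\tilde u_j/a_j$ and $\lambda_j=a_j^{p_j-1}$. Then
\[
-\Lap w_j=\lambda_jw_j^{p_j},\qquad -\Lap\tilde w_j=\lambda_j\tilde w_j^{p_j}.
\]
By Corollary~\ref{cor:intnorm} applied to $u_j$, we have $w_j\to\phi_1$ and $\lambda_j\to\lambda_1$. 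Applying Corollary~\ref{cor:intnorm} to $\tilde u_j$ gives $\tilde u_j/\tilde a_j\to\phi_1$ and $\tilde a_j^{p_j-1}\to\lambda_1$. Two facts remain to be shown: $\tilde a_j/a_j\to1$, so that $\tilde w_j\to\phi_1$ in $C^{2,\alpha}(\overline\Omega)$, and then a contradiction.

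\textbf{Step 1: comparing the normalizations.} The convergence $a_j^{p_j-1},\tilde a_j^{p_j-1}\to\lambda_1$ only gives $(\tilde a_j/a_j)^{p_j-1}\to1$. This is too weak, because $p_j-1\to0$. So we use the integral identity. Testing the equation for $w_j$ against $\phi_1$ gives
\[
\lambda_1=\lambda_j\int_\Omega w_j^{p_j}\phi_1 .
\]
Expanding $w_j^{p_j}=w_j e^{(p_j-1)\log w_j}$, we get
\[
\lambda_j=\lambda_1\Bigl(1-(p_j-1)\int_\Omega\phi_1^2\log\phi_1+o(p_j-1)\Bigr).
\]
This needs care near $\partial\Omega$, where $w_j\log w_j$ is uniformly integrable because $w_j\le C\phi_1$ by the $C^1$ convergence and Hopf. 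The same expansion holds for $\tilde\lambda_j=\tilde a_j^{p_j-1}$, so
\[
(p_j-1)\log(\tilde a_j/a_j)=\log\tilde\lambda_j-\log\lambda_j=o(p_j-1),
\]
hence $\tilde a_j/a_j\to1$. This is the main obstacle: the naive limits lose exactly the information needed, and the first-order expansion in $p-1$ recovers it.

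\textbf{Step 2: the linearized limit.} Set
\[
\psi_j=\frac{w_j-\tilde w_j}{\|w_j-\tilde w_j\|_{L^\infty}}.
\]
Then $-\Lap\psi_j=\lambda_jc_j\psi_j$, where
\[
c_j=p_j\int_0^1\bigl(tw_j+(1-t)\tilde w_j\bigr)^{p_j-1}\,dt .
\]
Since both functions are comparable to $\phi_1$, we have $c_j\to1$ locally uniformly and $0\le c_j\le C$ globally; near the boundary the factor $s^{p_j-1}$ stays bounded, as $p_j>1$. Elliptic $W^{2,q}$ estimates give $\psi_j\to\psi$ in $C^1(\overline\Omega)$ with $\|\psi\|_\infty=1$ and $-\Lap\psi=\lambda_1\psi$. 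Hence $\psi=\pm\phi_1/m_1$.

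\textbf{Step 3: the contradiction.} Test the difference equation against $\phi_1$:
\[
\lambda_1\int(w_j-\tilde w_j)\phi_1=\lambda_j\int\bigl(w_j^{p_j}-\tilde w_j^{p_j}\bigr)\phi_1 .
\]
The left side vanishes by the common normalization $\int w_j\phi_1=1$. The first-order deviation in $p_j$ is then tracked more finely. Write
\[
w_j^{p_j}-\tilde w_j^{p_j}=(w_j-\tilde w_j)\bigl(1+(p_j-1)(\log\phi_1+1)+o(p_j-1)\bigr),
\]
with the $o$-term uniform, again using $w_j,\tilde w_j\sim\phi_1$. Divide by $(p_j-1)\|w_j-\tilde w_j\|_\infty$ and use $\int\psi_j\phi_1=0$. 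In the limit this gives
\[
\int_\Omega\psi\,\phi_1(\log\phi_1+1)=0 .
\]
But $\psi=\pm\phi_1/m_1$, so this says $\int_\Omega\phi_1^2(\log\phi_1+1)=0$. In general that does not hold, so the argument must be refined rather than closed here.

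Instead, use the orthogonality $\int\psi_j\phi_1=0$ directly. In the limit it gives $\int\psi\phi_1=0$, which contradicts $\psi=\pm\phi_1/m_1$, since $\int\phi_1^2=1\neq0$. This is the contradiction, and it proves uniqueness.
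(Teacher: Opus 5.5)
Your Steps 1 and 2 are sound, but Step 3 rests on a false identity. You divide both solutions by the \emph{same} scalar $a_j=\int_\Omega u_j\phi_1$, so only $w_j$ satisfies $\int_\Omega w_j\phi_1=1$. For the other one, $\int_\Omega\tilde w_j\phi_1=\tilde a_j/a_j$. Hence
\[
\int_\Omega\psi_j\phi_1=\frac{1-\tilde a_j/a_j}{\|w_j-\tilde w_j\|_{L^\infty}}.
\]
Step 1 only shows that the numerator tends to $0$, and the denominator tends to $0$ as well. In fact your own Step 2 forces $\int_\Omega\psi_j\phi_1\to\pm1/m_1\neq0$. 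With a common scaling, $w_j$ and $\tilde w_j$ solve the same equation $-\Lap w=\lambda_jw^{p_j}$, so their normalized difference converges into the kernel of $-\Lap-\lambda_1$, which is spanned by $\phi_1$. Your final ``contradiction'' is therefore between Step 2 and an unjustified orthogonality claim, not with the assumption $u_j\neq\tilde u_j$. For the same reason, the sentence ``the left side vanishes by the common normalization'' in your first attempt at Step 3 is wrong. The identity $\int_\Omega\phi_1^2(\log\phi_1+1)=0$ you derived there is an artifact of that error.

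There are two ways to close the argument.

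\emph{The paper's route.} Normalize each solution by its own integral, $w_j=u_j/a_j$ and $\tilde w_j=\tilde u_j/\tilde a_j$, so that $\int_\Omega(w_j-\tilde w_j)\phi_1=0$ holds genuinely. The price is two different coefficients $\lambda_j\neq\tilde\lambda_j$. Testing against $\phi_1$ gives $|\lambda_j-\tilde\lambda_j|\le C\|w_j-\tilde w_j\|_\infty$. The normalized difference then solves $-\Lap\eta_j=\lambda_jA_j\eta_j+\theta_j\tilde w_j^{p_j}$ with $\theta_j$ bounded. The limit problem is $-\Lap\eta=\lambda_1\eta+\theta\phi_1$ with $\int_\Omega\eta\phi_1=0$. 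Testing it against $\phi_1$ yields $\theta=0$, and then $\eta=0$, contradicting $\|\eta\|_\infty=1$. This needs neither your Step 1 nor any logarithmic expansion.

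\emph{Salvaging your route.} Your common-normalization argument works at first order in $p_j-1$ if you keep the left-hand side. Write the tested difference equation as
\[
(\lambda_1-\lambda_j)\int_\Omega(w_j-\tilde w_j)\phi_1=\lambda_j\int_\Omega\bigl(w_j^{p_j}-\tilde w_j^{p_j}-(w_j-\tilde w_j)\bigr)\phi_1 .
\]
Insert $\lambda_1-\lambda_j=\lambda_1\kappa(p_j-1)+o(p_j-1)$, with $\kappa=\int_\Omega\phi_1^2\log\phi_1$, from Step 1. Divide by $(p_j-1)\|w_j-\tilde w_j\|_\infty$ and let $j\to\infty$. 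This gives $\kappa\int_\Omega\psi\phi_1=\int_\Omega\psi\phi_1(1+\log\phi_1)$. With $\psi=\pm\phi_1/m_1$ it reads $\kappa=\kappa+1$, a contradiction on every domain. One caution: the remainders are $o(p_j-1)$ only after integration against $\phi_1$, using that $w_j/\phi_1$ and $\tilde w_j/\phi_1$ tend to $1$ uniformly. They are not pointwise uniform, as you claimed.
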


\begin{proof}
Existence follows from Lemma~\ref{lem:existence} after shrinking $\varepsilon_0$ so that $1+\varepsilon_0<p_S(n)$ when $n\ge3$.

Suppose uniqueness fails.  Then there exist $p_j\downarrow1$ and two distinct positive solutions $u_j,\widetilde u_j$.  Normalize them by
\[
a_j=\int u_j\phi_1,\quad w_j=u_j/a_j,\quad \lambda_j=a_j^{p_j-1},
\]
and similarly $\widetilde a_j,\widetilde w_j,\widetilde\lambda_j$.  Corollary~\ref{cor:intnorm} yields
\[
w_j,\widetilde w_j\to\phi_1\quad\text{in }C^{2,\alpha},
\qquad \lambda_j,\widetilde\lambda_j\to\lambda_1.
\]
If $w_j=\widetilde w_j$, then the two equations imply $\lambda_j=\widetilde\lambda_j$, hence $a_j=\widetilde a_j$ because $p_j>1$, contradicting $u_j\ne\widetilde u_j$.  Thus
\[
d_j:=\|w_j-\widetilde w_j\|_\infty>0,
\qquad \eta_j:=\frac{w_j-\widetilde w_j}{d_j}.
\]
Then
\begin{equation}
\label{eq:etaOrth}
\|\eta_j\|_\infty=1,
\qquad \int_\Omega\eta_j\phi_1=0.
\end{equation}
Testing the normalized equations against $\phi_1$ gives
\[
\lambda_j=\frac{\lambda_1}{\int w_j^{p_j}\phi_1},
\qquad
\widetilde\lambda_j=\frac{\lambda_1}{\int \widetilde w_j^{p_j}\phi_1}.
\]
The denominators converge to $1$ and, on the common bounded range of $w_j$ and $\widetilde w_j$, the maps $s\mapsto s^{p_j}$ are uniformly Lipschitz.  Consequently
\begin{equation}
\label{eq:lambdaLip}
|\lambda_j-\widetilde\lambda_j|\le C d_j.
\end{equation}
Write
\[
A_j(x):=p_j\int_0^1\bigl(tw_j+(1-t)\widetilde w_j\bigr)^{p_j-1}\,dt,
\qquad
\theta_j:=\frac{\lambda_j-\widetilde\lambda_j}{d_j}.
\]
Then $(\theta_j)$ is bounded and
\begin{equation}
\label{eq:etaeq}
-\Lap\eta_j=\lambda_jA_j\eta_j+\theta_j\widetilde w_j^{p_j}.
\end{equation}
The functions $A_j$ are uniformly bounded and converge pointwise in $\Omega$ to $1$; hence $A_j\to1$ in $L^q(\Omega)$ for every finite $q$.  Elliptic estimates applied to \eqref{eq:etaeq} give, after passing to a subsequence,
\[
\eta_j\to\eta\quad\text{uniformly},\qquad \theta_j\to\theta,
\]
and the limit solves
\begin{equation}
\label{eq:etalimit}
-\Lap\eta=\lambda_1\eta+\theta\phi_1,
\qquad \int_\Omega\eta\phi_1=0.
\end{equation}
Testing \eqref{eq:etalimit} against $\phi_1$ gives $\theta=0$.  Therefore $\eta$ is a $\lambda_1$-eigenfunction orthogonal to $\phi_1$, hence $\eta=0$.  This contradicts $\|\eta\|_\infty=1$, which follows from uniform convergence and \eqref{eq:etaOrth}.
\end{proof}

\begin{proposition}
\label{prop:near1nondeg}
After possibly decreasing $\varepsilon_0$, the unique positive solution $u_p$ is nondegenerate for every $1<p<1+\varepsilon_0$:
\[
\Ker\bigl(-\Lap-pu_p^{p-1}\bigr)=\{0\}
\quad\text{in }H_0^1(\Omega).
\]
\end{proposition}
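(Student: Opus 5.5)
I will argue by contradiction, following the blow-up and normalization scheme of Proposition~\ref{prop:near1unique}. Suppose there are $p_j\downarrow1$ and nonzero $\psi_j\in H_0^1(\Omega)$ with
\[
-\Lap\psi_j=p_ju_j^{p_j-1}\psi_j,\qquad u_j:=u_{p_j}.
\]
Elliptic regularity makes $\psi_j$ classical, so I normalize $\|\psi_j\|_\infty=1$. I then rewrite the potential through the integral normalization of Corollary~\ref{cor:intnorm}. With $w_j=u_j/a_j$ and $\lambda_j=a_j^{p_j-1}$ we have
\[
p_ju_j^{p_j-1}=p_j\lambda_jw_j^{p_j-1},
\]
and the linearized equation becomes
\[
-\Lap\psi_j=p_j\lambda_jw_j^{p_j-1}\psi_j.
\]
The coefficient $V_j:=p_j\lambda_jw_j^{p_j-1}$ is the quantity to control. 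Since $w_j\to\phi_1$ in $C^{2,\alpha}(\overline\Omega)$, the $w_j$ are uniformly bounded above, so $V_j$ is uniformly bounded above. In the interior, $w_j\to\phi_1>0$ locally uniformly, so $w_j^{p_j-1}\to1$ pointwise in $\Omega$. Near $\partial\Omega$, $w_j^{p_j-1}$ may drop to $0$, but it stays in $[0,C]$. Hence $V_j\to\lambda_1$ pointwise in $\Omega$ and boundedly, and therefore in every $L^q(\Omega)$ with $q<\infty$.

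Next I pass to the limit. Global $W^{2,q}$ estimates with $q>n$ give $\psi_j$ bounded in $C^{1,\gamma}(\overline\Omega)$, so a subsequence converges uniformly to some $\psi$ with $\|\psi\|_\infty=1$. Passing to the limit in the weak formulation, using $V_j\psi_j\to\lambda_1\psi$ in $L^q$, yields
\[
-\Lap\psi=\lambda_1\psi,\qquad \psi\in H_0^1(\Omega).
\]
Since $\lambda_1$ is simple, $\psi=\pm\phi_1/m_1$. After a sign change I may assume $\psi>0$, and $\int\psi_j\phi_1\to\int\psi\phi_1>0$.

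The contradiction comes from testing both equations against each other, which I expect to be the main step. Multiply the linearized equation by $u_j$, the equation for $u_j$ by $\psi_j$, integrate, and apply Green's identity. This gives
\[
\int_\Omega u_j^{p_j}\psi_j=p_j\int_\Omega u_j^{p_j}\psi_j,
\]
that is, $(p_j-1)\int u_j^{p_j}\psi_j=0$. Since $p_j>1$, we get $\int u_j^{p_j}\psi_j=0$, equivalently $\int w_j^{p_j}\psi_j=0$. But $w_j^{p_j}\psi_j\to\phi_1\psi$ uniformly, so
\[
\int_\Omega w_j^{p_j}\psi_j\to\int_\Omega\phi_1\psi\ne0,
\]
a contradiction. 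Consequently, for $p$ close to $1$, the kernel is trivial, and I shrink $\varepsilon_0$ accordingly.

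The only delicate point is the nonuniform convergence of $w_j^{p_j-1}$ near the boundary. This is handled by bounded, pointwise convergence together with dominated convergence, so no boundary-layer analysis is needed.
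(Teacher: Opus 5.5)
Your proof is correct and follows essentially the same route as the paper. It uses the same integral-normalized potential $V_j=p_j\lambda_jw_j^{p_j-1}\to\lambda_1$ via dominated convergence, the same passage to a limiting multiple of $\phi_1$, and the same self-adjointness identity $(p_j-1)\int_\Omega u_j^{p_j}\psi_j=0$ to reach the contradiction. The only difference is cosmetic: you normalize $\|\psi_j\|_\infty=1$ and extract uniform convergence from global $W^{2,q}$ estimates, whereas the paper normalizes $\|\psi_j\|_{L^2}=1$ and uses strong $L^2$ compactness.
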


\begin{proof}
Suppose not.  Then there exist $p_j\downarrow1$, positive solutions $u_j$, and nonzero $\psi_j\in H_0^1(\Omega)$ such that
\[
-\Lap\psi_j=p_ju_j^{p_j-1}\psi_j.
\]
Normalize $\|\psi_j\|_{L^2}=1$, and write $u_j=a_jw_j$ with the integral normalization of Corollary~\ref{cor:intnorm}.  Then
\[
p_ju_j^{p_j-1}=p_j\lambda_jw_j^{p_j-1}=:V_j.
\]
Since $w_j\to\phi_1$ uniformly, $(w_j)$ is uniformly bounded, and $p_j-1\to0$, the potentials $V_j$ are uniformly bounded and
\begin{equation}
\label{eq:Vconv}
V_j\to\lambda_1\qquad\text{in }L^q(\Omega)
\end{equation}
for every finite $q$, by dominated convergence.  Elliptic estimates give compactness of $(\psi_j)$ in $L^2$; passing to a subsequence,
\[
\psi_j\to\psi\quad\text{strongly in }L^2,
\]
and the limit equation is
\[
-\Lap\psi=\lambda_1\psi,
\qquad \|\psi\|_{L^2}=1.
\]
Thus $\psi=c\phi_1$ with $c\ne0$.

Self-adjointness yields the following orthogonality identity.  Since
\[
(-\Lap-p_ju_j^{p_j-1})u_j=(1-p_j)u_j^{p_j},
\]
we have
\[
0=\int_\Omega u_j(-\Lap-p_ju_j^{p_j-1})\psi_j
=(1-p_j)\int_\Omega u_j^{p_j}\psi_j.
\]
Therefore
\[
\int_\Omega w_j^{p_j}\psi_j=0.
\]
Because $w_j^{p_j}\to\phi_1$ uniformly, passing to the limit yields
\[
\int_\Omega\phi_1\psi=0,
\]
contradicting $\psi=c\phi_1\ne0$.
\end{proof}

\subsection{Boundary convexity and the initial solution curve}
\label{sec:collar}

\begin{lemma}
\label{lem:collar}
Let $\mathscr K$ be a compact family in $C^2(\overline\Omega)$ of positive functions $u$ satisfying $u=0$ on $\partial\Omega$ and $-\Lap u>0$ in $\Omega$.  Let $\beta$ range in a compact subset of $[0,\infty)$, with the conclusion considered for $\beta>0$.  Then there exists $\delta>0$, uniform over the family, such that
\begin{equation}
\label{eq:collarResult}
\Hess(u^{-\beta})>0
\qquad\text{whenever }0<\dist(x,\partial\Omega)<\delta.
\end{equation}
\end{lemma}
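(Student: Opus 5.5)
The plan is to reduce \eqref{eq:collarResult} to a sign condition on a quadratic form built from $\Hess u$ and $du\otimes du$. That form is then controlled near $\partial\Omega$ by three ingredients: a uniform Hopf bound, the uniform convexity \eqref{eq:introUniformConvexity}, and the smallness of $u$ near the boundary.

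Where $u>0$, direct differentiation gives
\[
\Hess(u^{-\beta})=-\beta u^{-\beta-1}\Bigl[\Hess u-\frac{\beta+1}{u}\,du\otimes du\Bigr].
\]
For $\beta>0$ it therefore suffices to show that
\[
Q:=\Hess u-(\beta+1)u^{-1}du\otimes du
\]
is negative definite in a collar. Because $\beta\ge0$, we have $\beta+1\ge1$, so the bound for $Q$ will be uniform over the compact $\beta$-range. Only the upper bound on that range matters, and only through harmless constants.

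\textbf{Uniform boundary estimates.} Each $u$ in the family satisfies $-\Lap u>0$, $u>0$ in $\Omega$ and $u=0$ on $\partial\Omega$. The Hopf lemma then gives $\partial_\nu u<0$ on $\partial\Omega$. The map $(u,x)\mapsto\partial_\nu u(x)$ is continuous on the compact set $\mathscr K\times\partial\Omega$ (compactness in $C^2$), so there is $m>0$ with $|\grad u|\ge 2m$ on $\partial\Omega$ for all $u\in\mathscr K$.

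Next, let $Y$ be a unit tangent vector to $\partial\Omega$. Since $u\equiv0$ on $\partial\Omega$ and $\grad u=\partial_\nu u\,\nu$ there, the identity $(\nabla_YY)^\perp=-\II(Y,Y)\nu$ yields
\[
\Hess u(Y,Y)=\partial_\nu u\,\II(Y,Y)\le-2m\kappa_0 .
\]
The family is equicontinuous in $C^2$ and $\partial\Omega$ is smooth, so there is a uniform $\delta_1>0$ with the following properties for every $u\in\mathscr K$ and every $x$ with $\dist(x,\partial\Omega)<\delta_1$:
\begin{itemize}
\item $|\grad u(x)|\ge m$;
\item $\|\Hess u(x)\|\le C$, by the $C^2$ bound;
\item $\Hess u(Y,Y)\le -m\kappa_0|Y|^2$ for $Y\perp\grad u(x)$.
\end{itemize}
For the third property, the unit normal $N=\grad u/|\grad u|$ at $x$ is uniformly close to $-\nu$ at the nearest boundary point, so the orthogonal complement of $N$ is close to the parallel-transported tangent space of $\partial\Omega$.

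\textbf{Absorption.} Write $X=aN+Y$ with $Y\perp N$. Then
\[
Q(X,X)\le -m\kappa_0|Y|^2+2C|a||Y|+Ca^2-\frac{m^2}{u}a^2 .
\]
Absorb the cross term as $2C|a||Y|\le \tfrac12 m\kappa_0|Y|^2+C'a^2$. This gives
\[
Q(X,X)\le-\tfrac12 m\kappa_0|Y|^2-\Bigl(\frac{m^2}{u}-C-C'\Bigr)a^2 .
\]
Since $u=0$ on $\partial\Omega$ and $\|\grad u\|_\infty\le C$ uniformly, we have $u(x)\le C\dist(x,\partial\Omega)$. Choose $\delta\le\delta_1$ so small that $m^2/(C\delta)>C+C'$. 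Then $Q(X,X)<0$ for every $X\ne0$ in the collar, and multiplying by the negative factor $-\beta u^{-\beta-1}$ gives \eqref{eq:collarResult}.

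\textbf{Expected main obstacle.} The main obstacle is the uniform tangential estimate. It requires transferring the boundary bound $\Hess u(Y,Y)\le-2m\kappa_0$ to the level-set directions $N^\perp$ at interior points, uniformly over the family. I would handle this with Fermi coordinates on a fixed tubular neighbourhood of $\partial\Omega$. Working in the compact set $\mathscr K\times\partial\Omega\times[0,\delta_1]$, the continuity of $\Hess u$ and of $\grad u$ in $C^0$, together with the nonvanishing of $\grad u$, gives the required uniform modulus. The remaining steps are algebraic.
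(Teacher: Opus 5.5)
Your proposal is correct and follows essentially the same route as the paper. Both arguments use a uniform Hopf bound by compactness and the boundary identity $\Hess u(\tau,\tau)=\partial_\nu u\,\II(\tau,\tau)$, transferred to directions orthogonal to $\grad u$ in a uniform collar. Both then decompose $X=\tau+sN$, apply Young's inequality, and use $u\le C\,\dist(x,\partial\Omega)$ so that the normal coefficient dominates.

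There are two minor differences. The paper multiplies through by $u$ instead of dividing, and it proves the level-set tangential bound by a contradiction–compactness argument rather than your direct uniform-modulus transfer. Also, your $\delta$ is independent of $\beta$ altogether, since only $\beta+1\ge1$ is used; even the upper bound on the $\beta$-range never enters.
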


\begin{proof}
By Hopf's lemma, $\partial_\nu u<0$ on $\partial\Omega$ for every member of the family.  Compactness of $\mathscr K$ in $C^1$ and compactness of $\partial\Omega$ imply the uniform bound
\begin{equation}
\label{eq:uniformHopf}
-\partial_\nu u\ge2a_0>0
\qquad\text{on }\partial\Omega.
\end{equation}
Indeed, otherwise a convergent sequence in $\mathscr K$ would have a limit violating Hopf's lemma.  Shrinking a tubular neighborhood uniformly and using $C^1$ compactness gives
\begin{equation}
\label{eq:gradlower}
|\grad u|\ge a_0
\end{equation}
there.  Also
\begin{equation}
\label{eq:Hessupper}
|\Hess u|\le C_0
\end{equation}
uniformly.

For $\tau\in T_x\partial\Omega$, differentiating $u|_{\partial\Omega}=0$ twice tangentially gives
\begin{equation}
\label{eq:boundaryHessIdentity}
\Hess u(\tau,\tau)=\partial_\nu u\,\II(\tau,\tau)
\le-2a_0\kappa_0|\tau|^2.
\end{equation}
We next show that, after shrinking the tubular neighborhood, there exists $b_0>0$ such that
\begin{equation}
\label{eq:levelTangential}
\Hess u(\tau,\tau)\le-b_0|\tau|^2
\end{equation}
for every vector $\tau$ satisfying $\tau\perp\grad u$.  If not, there would exist $u_j\in\mathscr K$, points $x_j\to x_*\in\partial\Omega$, and unit vectors $\tau_j\perp\grad u_j(x_j)$ for which the left side of \eqref{eq:levelTangential} tends to a nonnegative value.  Passing to a $C^2$ limit and parallel-transporting the vectors in a local chart gives $\tau_*\perp\grad u_*(x_*)$.  Since $\grad u_*(x_*)$ is a nonzero normal vector by Hopf's lemma, $\tau_*\in T_{x_*}\partial\Omega$, contradicting \eqref{eq:boundaryHessIdentity}.  We may take, after a further shrinking, $b_0=a_0\kappa_0$.

Fix $\beta>0$ and set
\[
v=u^{-\beta}.
\]
A direct differentiation gives
\begin{equation}
\label{eq:powerHess}
\Hess v
=\beta u^{-\beta-2}\bigl[(\beta+1)\,du\otimes du-u\Hess u\bigr].
\end{equation}
Let
\[
N:=\frac{\grad u}{|\grad u|},\qquad X=\tau+sN,\qquad \tau\perp\grad u.
\]
Using \eqref{eq:gradlower}--\eqref{eq:levelTangential},
\[
\begin{aligned}
&\bigl[(\beta+1)du\otimes du-u\Hess u\bigr](X,X)\\
&\quad\ge (\beta+1)a_0^2s^2+b_0u|\tau|^2-2C_0u|s||\tau|-C_0us^2.
\end{aligned}
\]
Young's inequality gives
\[
2C_0u|s||\tau|
\le\frac{b_0}{2}u|\tau|^2+\frac{2C_0^2}{b_0}us^2,
\]
so
\begin{equation}
\label{eq:collarSquare}
\begin{aligned}
&\bigl[(\beta+1)du\otimes du-u\Hess u\bigr](X,X)\\
&\quad\ge \frac{b_0}{2}u|\tau|^2
+\left[(\beta+1)a_0^2-\left(C_0+\frac{2C_0^2}{b_0}\right)u\right]s^2.
\end{aligned}
\end{equation}
The family is uniformly $C^1$ and vanishes on $\partial\Omega$, hence
\[
u(x)\le C_1\dist(x,\partial\Omega).
\]
Since $\beta+1\ge1$, choose $\delta>0$ so that
\[
\left(C_0+\frac{2C_0^2}{b_0}\right)C_1\delta\le\frac{a_0^2}{2}.
\]
Then the right-hand side of \eqref{eq:collarSquare} is positive for every nonzero $X$ in the collar.  Equation \eqref{eq:powerHess} proves \eqref{eq:collarResult}.
\end{proof}

\begin{proposition}
\label{prop:near1convex}
There exists $\varepsilon_1>0$ such that the unique positive solution $u_p$ satisfies
\[
\Hess\!\left(u_p^{-\frac{p-1}{3}}\right)>0
\qquad\text{in }\Omega
\]
for every $1<p<1+\varepsilon_1$.
\end{proposition}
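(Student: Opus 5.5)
We argue by contradiction and compactness, splitting $\Omega$ into a boundary collar, where Lemma~\ref{lem:collar} applies, and an interior compact set, where the near-linear limit reduces the claim to the strict log-concavity \eqref{eq:LW}. The key observation is that as $p\to1$ with $\beta=(p-1)/3\to0$, the transform $u^{-\beta}$ behaves like $1-\beta\log u$. More precisely, \eqref{eq:powerHess} gives
\[
\Hess(u^{-\beta})=\beta u^{-\beta}\Bigl[-\Hess\log u+\beta\,d\log u\otimes d\log u\Bigr],
\]
so positivity of $\Hess(u^{-\beta})$ follows from $-\Hess\log u\ge0$ plus a positive rank-one term. Normalizing $u_p=a_pw_p$ as in Corollary~\ref{cor:intnorm}, we have $\Hess\log u_p=\Hess\log w_p$, and $w_p\to\phi_1$ in $C^{2,\alpha}(\overline\Omega)$.

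\emph{Collar.} The family $\{w_p:1<p<1+\varepsilon_0\}$ together with its limit $\phi_1$ is precompact in $C^2(\overline\Omega)$. Each member is positive, vanishes on $\partial\Omega$, and satisfies $-\Lap w_p=\lambda_pw_p^{p}>0$. The exponents $\beta=(p-1)/3$ lie in the compact set $[0,\varepsilon_0/3]$. Lemma~\ref{lem:collar} therefore gives a uniform $\delta>0$ with $\Hess(w_p^{-\beta})>0$ for $0<\dist(x,\partial\Omega)<\delta$. Since $u_p^{-\beta}=a_p^{-\beta}w_p^{-\beta}$ is a positive constant multiple, the same holds for $u_p$. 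We need the closure of the family to be included so that the Hopf constant $a_0$ is uniform; the limit $\phi_1$ satisfies $-\Lap\phi_1=\lambda_1\phi_1>0$, so it is admissible.

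\emph{Interior.} On $K_\delta:=\{\dist(x,\partial\Omega)\ge\delta\}$, the functions $w_p$ are bounded below by a uniform positive constant for $p$ near $1$, by uniform convergence to $\phi_1>0$ on $K_\delta$. Hence $\Hess\log w_p\to\Hess\log\phi_1$ uniformly on $K_\delta$. By \eqref{eq:LW} and compactness, $\Hess\log\phi_1\le-2c\,g$ on $K_\delta$ for some $c>0$. Thus $-\Hess\log w_p\ge c\,g$ on $K_\delta$ once $p<1+\varepsilon_1$, and the displayed identity yields $\Hess(u_p^{-\beta})>0$ there.

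\emph{Main obstacle.} The only delicate point is uniformity at the boundary, where strict log-concavity of $\phi_1$ degenerates in the sense that $\log\phi_1\to-\infty$ and the interior convergence argument fails. This is exactly what Lemma~\ref{lem:collar} handles, provided the family is uniformly $C^2$-compact with a uniform Hopf bound. That compactness comes from Corollary~\ref{cor:intnorm}, applied along an arbitrary sequence $p_j\to1$. The cleanest way to write the argument is as a contradiction along a sequence $p_j\downarrow1$ with a point $x_j$ where $\Hess(u_{p_j}^{-\beta_j})$ fails to be positive definite. By Lemma~\ref{lem:collar}, $x_j$ must stay in $K_\delta$, and the interior argument then gives the contradiction.
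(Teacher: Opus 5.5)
Your proposal is correct and follows essentially the same route as the paper: integral normalization $u_p=a_pw_p$, a uniform collar from Lemma~\ref{lem:collar} applied to the $C^2$-compact family $\{w_p\}\cup\{\phi_1\}$, and on the remaining compact set the strict log-concavity \eqref{eq:LW} transferred to $w_p$ by $C^2$ convergence and combined with the identity \eqref{eq:logtopower}. Your closing contradiction-along-a-sequence formulation is only a restatement of that direct argument.
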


\begin{proof}
Use the integral normalization $u_p=a_pw_p$.  By Corollary~\ref{cor:intnorm}, $w_p\to\phi_1$ in $C^{2,\alpha}(\overline\Omega)$ as $p\downarrow1$.  The family $\{w_p\}$ together with its limit $\phi_1$ is therefore compact in $C^2$.  Lemma~\ref{lem:collar}, applied with $\beta_p=(p-1)/3$, gives a uniform boundary collar on which $\Hess(w_p^{-\beta_p})>0$.

On the compact subset $K\Subset\Omega$ remaining after removal of this boundary neighborhood, Lee--Wang's theorem \eqref{eq:LW} and compactness give
\[
-\Hess(\log\phi_1)\ge2\sigma_Kg
\]
for some $\sigma_K>0$.  Since $w_p\to\phi_1$ in $C^2(K)$ and $\phi_1$ is bounded away from zero on $K$,
\[
-\Hess(\log w_p)\ge\sigma_Kg
\]
for $p$ sufficiently close to one.  Finally,
\begin{equation}
\label{eq:logtopower}
\Hess(w_p^{-\beta_p})
=\beta_pw_p^{-\beta_p}
\bigl[-\Hess(\log w_p)+\beta_p\,d\log w_p\otimes d\log w_p\bigr]>0.
\end{equation}
Multiplication by the positive constant $a_p^{-\beta_p}$ shows the same for $u_p^{-\beta_p}$.
\end{proof}

\subsection{A priori bounds on parameter strips}
\label{sec:Linfty}

\begin{proposition}
\label{prop:Linfty}
Let
\[
1<p_-\le p\le p_+<p_S(n).
\]
Then there exists
\[
C=C(\Omega,n,p_-,p_+)>0
\]
such that every positive solution of \eqref{eq:mainLE} satisfies
\[
\|u\|_{L^\infty(\Omega)}\le C.
\]
No convexity assumption is needed for this proposition.
\end{proposition}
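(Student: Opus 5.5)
We argue by contradiction via the Gidas--Spruck blow-up method. Suppose there are exponents $p_j\in[p_-,p_+]$ and positive solutions $u_j$ of \eqref{eq:mainLE} with $M_j:=\|u_j\|_{L^\infty(\Omega)}=u_j(x_j)\to\infty$. After passing to a subsequence, $p_j\to p_\infty\in[p_-,p_+]$, and $p_\infty<p_S(n)$ (for $n=2$, read $p_S=\infty$). We set
\[
\rho_j:=M_j^{-(p_j-1)/2}\to0,\qquad d_j:=\dist(x_j,\partial\Omega).
\]
The rescaled functions $Z_j(y):=M_j^{-1}u_j(\exp_{x_j}(\rho_jy))$ satisfy $-\Delta_{g_j}Z_j=Z_j^{p_j}$, $0\le Z_j\le1$, $Z_j(0)=1$. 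Here $g_j$ is the pulled-back metric, which converges smoothly to the Euclidean metric on compact sets. Since the right-hand sides are uniformly bounded, interior $W^{2,q}$ and Schauder estimates give uniform $C^{2,\gamma}_{\rm loc}$ bounds. We then split into cases according to $d_j/\rho_j$, exactly as in the proof of Lemma~\ref{lem:maxnorm}.

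\emph{Interior case.} If $d_j/\rho_j\to\infty$, a diagonal argument produces a limit $Z\in C^2(\R^n)$ with $-\Delta Z=Z^{p_\infty}$, $0\le Z\le1$, and $Z(0)=1$. By the strong maximum principle $Z>0$. This contradicts the Gidas--Spruck Liouville theorem, which forbids positive solutions in $\R^n$ for $1<p<p_S(n)$. For $n=2$ we use the elementary version valid for all $p>1$.

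\emph{Boundary case.} If $d_j/\rho_j$ stays bounded, we straighten the boundary using Fermi coordinates centred at a nearest boundary point and rescale by $\rho_j$. The first step is to show that $d_j/\rho_j\ge c>0$; we invoke the quantitative boundary estimate Lemma~\ref{lem:quantBoundaryBlowup}, as the paper already does. The rescaled Dirichlet solutions are uniformly bounded with bounded right-hand side, so global $W^{2,q}$ estimates give a uniform $C^1$ bound up to the flat boundary. A function with gradient bounded by $C$, value $1$ at the maximum point, and vanishing on the boundary must have its maximum at distance at least $1/C$ from the boundary. Passing to the limit gives $Z$ on a half-space $\{y_n>-t\}$, $t\ge c$, with $-\Delta Z=Z^{p_\infty}$, $Z=0$ on the boundary, $0\le Z\le1$, and $Z(0)=1$. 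Hence $Z>0$ inside, which contradicts the half-space Liouville theorem of Gidas--Spruck (or Dancer), valid for $1<p<p_S(n)$.

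\emph{Main obstacle.} The delicate step is ensuring the constants are uniform as the exponent varies in $[p_-,p_+]$. This uniformity is needed both for the $C^{2,\gamma}$ convergence, where $s\mapsto s^{p_j}$ must be uniformly H\"older on $[0,1]$ (this holds since $p_j\ge p_->1$), and for the Liouville theorems, which must be applied to the limit exponent $p_\infty$. The latter is legitimate precisely because $p_+<p_S(n)$ is a strict inequality. Compactness of $\partial\Omega$ and smoothness of the Fermi charts give metric convergence that is uniform in the base point. With both cases excluded, $M_j$ is bounded, which gives the constant $C(\Omega,n,p_-,p_+)$. No convexity of $\Omega$ enters the argument.
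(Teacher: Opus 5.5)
Your proposal is correct and follows the paper's proof essentially step for step: contradiction, blow-up at scale $\rho_j=M_j^{-(p_j-1)/2}$, the interior case excluded by the Gidas--Spruck whole-space theorem, and the boundary case handled through Lemma~\ref{lem:quantBoundaryBlowup} (giving $d_j/\rho_j\ge c>0$) followed by a half-space Liouville theorem. The only difference is the citation in the last step: you use the Gidas--Spruck/Dancer half-space theorem for subcritical exponents, while the paper cites Dupaigne--Sirakov--Souplet for bounded solutions and any $q>1$; either applies here because $p_\infty\le p_+<p_S(n)$.
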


\begin{proof}
Suppose there are $p_j\in[p_-,p_+]$ and positive solutions $u_j$ with
\[
M_j:=\|u_j\|_\infty\to+\infty.
\]
Let $x_j$ be a maximum point and set
\begin{equation}
\label{eq:blowscale}
\rho_j:=M_j^{-(p_j-1)/2}\to0.
\end{equation}
After passing to a subsequence, $p_j\to p_\infty\in[p_-,p_+]$.  Put $d_j=\dist(x_j,\partial\Omega)$.

\medskip
\noindent\emph{Interior blow-up.}
Assume first $d_j/\rho_j\to+\infty$.  In exponential coordinates at $x_j$ define
\[
U_j(y):=M_j^{-1}u_j(\exp_{x_j}(\rho_jy)),
\qquad g_j:=\rho_j^{-2}(\exp_{x_j}\circ(\rho_j\,\cdot))^*g.
\]
Then on every fixed ball,
\[
g_j\to g_{\rm Euc}\quad\text{smoothly},
\qquad -\Delta_{g_j}U_j=U_j^{p_j},
\qquad 0<U_j\le1,
\qquad U_j(0)=1.
\]
Interior estimates yield
\[
U_j\to U\quad\text{in }C^2_{\rm loc}(\R^n),
\]
where
\[
-\Delta U=U^{p_\infty}\quad\text{in }\R^n,
\qquad U(0)=1,
\qquad 0<U\le1.
\]
The Gidas--Spruck whole-space Liouville theorem
\cite{GidasSpruck1981} excludes nonzero nonnegative classical
solutions of $-\Delta U=U^q$ on $\R^n$ for
$1<q<(n+2)/(n-2)$ when $n\ge3$, and for every $q>1$
when $n=2$. Since $p_\infty<p_S(n)$, this is a contradiction.

\medskip
\noindent\emph{Boundary blow-up.}
Assume now $d_j/\rho_j$ is bounded.  Then $d_j\to0$.  Choose a nearest boundary point $z_j$ and use uniform Fermi coordinates
\[
\Psi_j(y',s)=\exp_{\Theta_j(y')}(-s\nu(\Theta_j(y')))
\]
centered at $z_j$.  Rescale by $\rho_j$ and define
\[
U_j(y',y_n)
:=M_j^{-1}u_j(\Psi_j(\rho_jy',\rho_jy_n)).
\]
The rescaled metrics converge smoothly on compact half-balls to the Euclidean metric; the boundary becomes $\{y_n=0\}$, and
\[
-\Delta_{g_j}U_j=U_j^{p_j},
\qquad U_j=0\text{ on }\{y_n=0\},
\qquad 0\le U_j\le1.
\]
The rescaled maximum point is $(0,d_j/\rho_j)$.  After passing to a subsequence,
\[
\frac{d_j}{\rho_j}\to a\in[0,\infty).
\]
The quantitative boundary estimates of Lemma~\ref{lem:quantBoundaryBlowup} give $d_j/\rho_j\ge c>0$ and local $C^{2,\alpha}$ convergence up to the flat boundary to a function $U$ satisfying
\[
-\Delta U=U^{p_\infty}\quad\text{in }\R^n_+,
\qquad U=0\quad\text{on }\partial\R^n_+,
\qquad 0\le U\le1.
\]
Lemma~\ref{lem:quantBoundaryBlowup} gives $a\ge c>0$ and $U(ae_n)=1$, so $U$ is a bounded positive classical half-space solution.  By Dupaigne--Sirakov--Souplet
\cite[Corollary~1.2]{DupaigneSirakovSouplet2022},
the half-space Dirichlet equation $-\Delta U=U^q$, $q>1$,
admits no nonzero nonnegative classical solution bounded on
every finite-height strip. Here $q=p_\infty>1$, and the limit
is globally bounded by one, a contradiction.

Both alternatives lead to contradictions, and the desired uniform bound follows.
\end{proof}

\begin{remark}
For our final range,
\[
p_n^*=\frac{2n}{2n-3}<\frac{n+2}{n-2}=p_S(n)
\qquad(n\ge3),
\]
because
\[
(n+2)(2n-3)-2n(n-2)=5n-6>0.
\]
For $n=2$, $p_S(2)=+\infty$.  Hence Proposition~\ref{prop:Linfty} applies on every compact interval $[p_0,p_n^*]$ with $p_0>1$.
\end{remark}

\subsection{Global continuation and strict convexity}
\label{sec:continuum}

In this subsection, let $p_0>1$ be any exponent sufficiently
close to $1$ that Propositions~\ref{prop:near1unique},
\ref{prop:near1nondeg}, and \ref{prop:near1convex} apply, and write
\[
P:=p_n^*.
\]
The argument is uniform with respect to any such fixed choice of $p_0$.

\begin{proposition}
\label{prop:continuum}
There exists a compact connected set
\[
\mathscr C\subset[p_0,P]\times C_0(\overline\Omega)
\]
consisting of positive solutions of \eqref{eq:mainLE}, containing the unique point $(p_0,u_{p_0})$ on the left parameter face, and satisfying
\[
\pi_p(\mathscr C)=[p_0,P].
\]
Moreover, $\mathscr C$ is compact in $[p_0,P]\times C^{2,\alpha'}(\overline\Omega)$ for every sufficiently small $\alpha'>0$.
\end{proposition}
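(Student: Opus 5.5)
We will obtain $\mathscr C$ from the Leray--Schauder continuation theorem, in the form of Mawhin \cite{Mawhin1997} (compare Rabinowitz \cite{Rabinowitz1971}). The fixed-point formulation is
\[
u=T(p,u):=(-\Lap)^{-1}\bigl(|u|^p\bigr),\qquad (p,u)\in[p_0,P]\times C_0(\overline\Omega),
\]
where $(-\Lap)^{-1}$ is the Dirichlet inverse. This operator is completely continuous, since it maps bounded sets into bounded subsets of $C^{1,\gamma}$.

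The version of the theorem we use is the following. Let $\mathcal O\subset[p_0,P]\times X$ be a bounded open set such that $u\ne T(p,u)$ on $\partial\mathcal O$, and suppose the Leray--Schauder degree $\deg(I-T(p_0,\cdot),\mathcal O_{p_0},0)$ is nonzero. Then the fixed-point set in $\overline{\mathcal O}$ contains a compact connected set meeting both $\{p_0\}\times X$ and $\{P\}\times X$, and hence projecting onto $[p_0,P]$.

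We construct $\mathcal O$ as follows. First exclude the trivial solution. For each positive solution the Hopf lemma and the $L^\infty$ bounds give a positive lower bound, uniform in $p\in[p_0,P]$, away from $0$; we derive it below.

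\emph{Step 1 (upper bound).} By Proposition~\ref{prop:Linfty} with $p_-=p_0$ and $p_+=P<p_S(n)$, every positive solution satisfies $\|u\|_\infty\le C$.

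\emph{Step 2 (lower bound).} Every positive solution satisfies $\|u\|_\infty\ge \lambda_1^{1/(p-1)}\ge c_0>0$. Indeed, testing against $\phi_1$ gives $\lambda_1\int u\phi_1=\int u^p\phi_1\le\|u\|_\infty^{p-1}\int u\phi_1$.

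\emph{Step 3 (nonnegative fixed points are positive solutions).} Fixed points of $T$ with $u\ne0$ are nonnegative, since $|u|^p\ge0$ and the maximum principle applies. By the strong maximum principle they are positive solutions.

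We therefore set
\[
\mathcal O:=\{(p,u): c_0/2<\|u\|_\infty<2C\}.
\]
By Steps 1--3 there are no fixed points on $\partial\mathcal O$, and all fixed points in $\overline{\mathcal O}$ are positive solutions.

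\emph{Step 4 (the degree at $p_0$).} By Proposition~\ref{prop:near1unique}, the only fixed point in $\mathcal O_{p_0}$ is $u_{p_0}$. By Proposition~\ref{prop:near1nondeg}, the linearization $I-p_0(-\Lap)^{-1}(u_{p_0}^{p_0-1}\,\cdot)$ is injective, hence invertible by Fredholm theory. The degree therefore equals the index $\pm1\neq0$.

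An alternative, more robust route to the same conclusion uses only the superlinear structure. The degree of $I-T(p_0,\cdot)$ on $\{\|u\|<\epsilon\}$ is $1$, and on $\{\|u\|<R\}$ it is $0$ (homotope using $T+t\phi_1$). The degree on the annulus is then $-1$. This gives a second check.

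\emph{Step 5 (applying the theorem).} The continuation theorem now yields a connected compact $\mathscr C\subset\overline{\mathcal O}$ of positive solutions containing $(p_0,u_{p_0})$ with $\pi_p(\mathscr C)=[p_0,P]$. Its left face is exactly $\{(p_0,u_{p_0})\}$ by uniqueness.

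\emph{Step 6 (compactness in $C^{2,\alpha'}$).} The uniform bound of Step 1, combined with $W^{2,q}$ and Schauder estimates as in Lemma~\ref{lem:maxnorm}, gives a uniform $C^{2,\gamma}$ bound on $\mathscr C$. Here we use that $s\mapsto s^p$ is uniformly Hölder on $[0,C]$ for $p\in[p_0,P]$. The map $(p,u)\mapsto u$ is continuous into $C^{2,\alpha'}$ for $\alpha'<\gamma$ by interpolation. Hence $\mathscr C$ is compact in that topology.

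The main obstacle is ensuring that the connected set cannot escape through $u\equiv0$ or to infinity before reaching $p=P$. This is exactly the role of the a priori bounds in Steps 1 and 2, so the Liouville-based Proposition~\ref{prop:Linfty} carries the weight of the argument.
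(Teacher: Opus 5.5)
Your proposal is correct and follows essentially the paper's route. The shared steps are the fixed-point map $u\mapsto(-\Lap)^{-1}(|u|^p)$ on $C_0(\overline\Omega)$ (using $|u|^p$ instead of the paper's $(u_+)^p$ changes nothing), the annulus between the $\phi_1$-testing lower bound and the Liouville-based upper bound of Proposition~\ref{prop:Linfty}, degree $\pm1$ at $p_0$ from uniqueness and nondegeneracy, Mawhin's continuation theorem, and a $W^{2,q}$/Schauder bootstrap for $C^{2,\alpha'}$ compactness. Your ``alternative'' degree computation on balls is unnecessary, and as written it is unproven: it would also need a priori bounds along the $T+t\phi_1$ homotopy, which you have not established.
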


\begin{proof}
Let
\[
X:=C_0(\overline\Omega)
\]
with the sup norm, and let $G=(-\Lap)^{-1}$ denote the Dirichlet Green operator.  Define
\begin{equation}
\label{eq:Tmap}
T(p,u):=G((u_+)^p),
\qquad (p,u)\in[p_0,P]\times X.
\end{equation}
The map $(p,s)\mapsto(s_+)^p$ is uniformly continuous on compact subsets of $[p_0,P]\times\R$.  Thus $T$ is continuous.  If $B\subset[p_0,P]\times X$ is bounded, then $(u_+)^p$ is uniformly bounded in $L^q$ for every finite $q$.  Choosing $q>n$, global $W^{2,q}$ estimates give
\[
\|T(p,u)\|_{W^{2,q}}\le C_B,
\]
and $W^{2,q}\Subset C^0$.  Hence $T$ is completely continuous.

If $u=T(p,u)$, then
\[
-\Lap u=(u_+)^p\ge0,
\qquad u|_{\partial\Omega}=0.
\]
The maximum principle gives $u\ge0$, so either $u\equiv0$ or $u>0$ and $u$ is a positive solution of \eqref{eq:mainLE}.  Consequently, nonzero fixed points correspond precisely to positive solutions.

By Proposition~\ref{prop:Linfty}, there exists $M_*>0$ such that every positive solution for $p\in[p_0,P]$ satisfies $\|u\|_\infty\le M_*$.  There is also a uniform positive lower bound.  Indeed, testing against $\phi_1$ yields
\[
\lambda_1\int u\phi_1=\int u^p\phi_1
\le\|u\|_\infty^{p-1}\int u\phi_1,
\]
so
\[
\|u\|_\infty\ge\lambda_1^{1/(p-1)}.
\]
Therefore
\[
m_*:=\min_{p\in[p_0,P]}\lambda_1^{1/(p-1)}>0.
\]
Choose
\[
0<r<m_*,\qquad R>M_*,
\qquad \mathcal O:=\{u\in X:r<\|u\|_\infty<R\}.
\]
There is no fixed point of $T(p,\cdot)$ on $\partial\mathcal O$
for any $p\in[p_0,P]$. The set $\mathcal O$ is a bounded open annulus in $X$.
No convexity of $\mathcal O$ is required in the continuation theorem.

At $p=p_0$, Proposition~\ref{prop:near1unique} gives a unique nonzero fixed point $u_{p_0}$ in $\mathcal O$.  Since $p_0>1$, the Nemytskii map $u\mapsto(u_+)^{p_0}$ is $C^1$ on $C^0$ bounded sets, and
\[
D_uT(p_0,u_{p_0})h
=G(p_0u_{p_0}^{p_0-1}h).
\]
If $(I-D_uT)h=0$, then
\[
(-\Lap-p_0u_{p_0}^{p_0-1})h=0.
\]
Proposition~\ref{prop:near1nondeg} therefore gives $h=0$.  Because $D_uT$ is compact, $I-D_uT$ is Fredholm of index zero and hence invertible.  The local Leray--Schauder degree at $u_{p_0}$ is consequently $\pm1$.  By uniqueness and excision,
\[
\deg_{LS}(I-T(p_0,\cdot),\mathcal O,0)=\pm1\ne0.
\]
We apply Mawhin's Leray--Schauder continuation theorem
\cite[Theorem~1]{Mawhin1997} to the bounded open cylinder
$\mathcal O\times[p_0,P]$ (with an affine reparametrization of
$[p_0,P]$ to $[0,1]$). The assumptions in the original formulation
are: complete continuity, absence of fixed points on the lateral
boundary, a finite nonempty set of fixed points on the initial
face, and nonzero Leray--Schauder degree on that face.
They hold here because $T$ is completely continuous,
$T(p,\cdot)$ has no fixed point on $\partial\mathcal O$ for any
$p\in[p_0,P]$, the initial fixed-point set is
$\{u_{p_0}\}$, and the degree equals $\pm1$.
Mawhin also notes that the finiteness requirement can be omitted,
but it is satisfied in the present application.
Thus there exists a compact connected set $\mathscr C$ of fixed
points meeting both parameter faces.
Its parameter projection is connected and contains $p_0$ and $P$, hence equals $[p_0,P]$.

Finally, the uniform $L^\infty$ bound and the equation imply uniform global $W^{2,q}$ estimates.  After $q>n$ gives uniform $C^{1,\gamma}$ control, the nonlinearities $u^p$ have a uniform $C^{0,\gamma}$ bound because $p\in[p_0,P]$ and $s\mapsto s^p$ is uniformly Lipschitz on the common bounded range.  Schauder estimates then give a uniform $C^{2,\gamma}$ bound.  Consequently $C^0$ convergence inside $\mathscr C$ automatically upgrades, after subsequences and uniqueness of the $C^0$ limit, to $C^{2,\alpha'}$ convergence for every $0<\alpha'<\gamma$.
\end{proof}

\begin{proposition}\label{prop:openclosed}
Let $\mathscr S$ be the full positive-solution set defined in
Theorem~\ref{thm:main}, with the topology induced by
$(1,P]\times C_0(\overline\Omega)$. Then
\[
\mathscr G:=\bigl\{(p,u)\in\mathscr S:
\Hess(u^{-(p-1)/3})>0\text{ in }\Omega\bigr\}
\]
is both relatively open and relatively closed in $\mathscr S$.
Consequently, every connected component of $\mathscr S$ is either
contained in $\mathscr G$ or disjoint from $\mathscr G$.
\end{proposition}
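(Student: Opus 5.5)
Both halves of the proposition rest on the uniform a priori control already available: on any strip $[a,P]$ with $a>1$, Proposition~\ref{prop:Linfty} together with the lower bound $\|u\|_\infty\ge\lambda_1^{1/(p-1)}$ and the Schauder bootstrap at the end of the proof of Proposition~\ref{prop:continuum} show that convergence $(p_j,u_j)\to(p,u)$ in $\mathscr S$ with the $C^0$ topology upgrades to $C^{2,\alpha'}(\overline\Omega)$ convergence. Interior Schauder estimates then give local $C^4$ convergence on compact subsets of $\Omega$. Since $p>1$, every limit point in $\mathscr S$ has $p$ in some strip $[a,P]$ with $a>1$, so these estimates apply. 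The final sentence of the proposition is then the elementary fact that a connected set cannot meet both a relatively clopen set and its complement.

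\emph{Openness.} Fix $(p,u)\in\mathscr G$ and a sequence $(p_j,u_j)\to(p,u)$ in $\mathscr S$; I will show $(p_j,u_j)\in\mathscr G$ for large $j$. The family $\{u_j\}\cup\{u\}$ is compact in $C^2(\overline\Omega)$, and the exponents $\beta_j=(p_j-1)/3$ lie in a compact subset of $(0,\infty)$. Lemma~\ref{lem:collar} therefore gives a collar of width $\delta>0$, independent of $j$, on which $\Hess(u_j^{-\beta_j})>0$. On the compact set $K=\{\dist(\cdot,\partial\Omega)\ge\delta\}$ the functions $u_j$ are bounded below uniformly, and $\Hess(u^{-\beta})\ge 2\sigma g$ on $K$ for some $\sigma>0$ by continuity. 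The $C^2(K)$ convergence $u_j^{-\beta_j}\to u^{-\beta}$ then yields $\Hess(u_j^{-\beta_j})\ge\sigma g$ on $K$ for large $j$. Hence $\mathscr G$ is relatively open.

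\emph{Closedness.} Let $(p_j,u_j)\in\mathscr G$ converge to $(p,u)\in\mathscr S$. Passing to the limit in $C^2_{\rm loc}$ gives $\Hess(u^{-\beta})\ge0$ in $\Omega$ with $\beta=(p-1)/3$. Now $u$ is a positive $C^4$ solution of $-\Lap u=u^p$ with $1<p\le P=2n/(2n-3)$, so Lemma~\ref{lem:Schur} applies with $\lambda=1$ and $U=\Omega$ and upgrades this to $\Hess(u^{-\beta})>0$ in $\Omega$. Thus $(p,u)\in\mathscr G$.

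The main obstacle is the openness step. The set $\Omega$ is open, so a pointwise strict inequality does not automatically have a uniform margin that survives perturbation; near $\partial\Omega$, $u^{-\beta}$ blows up and its Hessian is not controlled by $C^2$ convergence of $u$ alone. This is exactly why the uniform collar of Lemma~\ref{lem:collar} is used, and why the uniform Hopf bound it needs is supplied by the $C^2(\overline\Omega)$ compactness of the convergent sequence together with its limit. By contrast, closedness is straightforward because the Schur-complement strictification of Lemma~\ref{lem:Schur} is purely local and requires no boundary information.
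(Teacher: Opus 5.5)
Your proposal is correct and follows the paper's proof essentially step for step. You upgrade $C^0$ convergence in $\mathscr S$ to $C^2(\overline\Omega)$ convergence by the elliptic bootstrap, obtain openness from the uniform collar of Lemma~\ref{lem:collar} together with uniform convergence of the transformed Hessians on the complementary compact set, and obtain closedness by passing to $\Hess(u^{-(p-1)/3})\ge0$ in the limit and invoking Lemma~\ref{lem:Schur}.
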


\begin{proof}
We first record the topology needed for this assertion. For each
compact interval $I=[a,b]\subset(1,P]$, the set of all positive
solutions with $p\in I$ is compact in $I\times C_0(\overline\Omega)$.
Indeed, Proposition~\ref{prop:Linfty} gives a uniform upper bound.
Testing against the positive first Dirichlet eigenfunction gives
\[
\|u\|_\infty^{p-1}\ge\lambda_1,
\]
so the solutions have a uniform positive lower bound in the
sup norm. Global elliptic estimates and a bootstrap through
$W^{2,q}$ and $C^{2,\gamma}$, for a sufficiently small fixed
$\gamma>0$, then give precompactness in $C^2(\overline\Omega)$.
Every limit satisfies the equation and is nontrivial, hence strictly
positive by the strong maximum principle. In particular, if
$(p_j,u_j)\to(p,u)$ in $\mathscr S$ with the indicated product
topology, then
\begin{equation}\label{eq:componentCtwo}
u_j\to u\qquad\text{in }C^2(\overline\Omega).
\end{equation}

\emph{Relative openness.}
Take $(p,u)\in\mathscr G$ and a sequence
$(p_j,u_j)\in\mathscr S$ converging to $(p,u)$. We show that
$(p_j,u_j)\in\mathscr G$ for all sufficiently large $j$.
By \eqref{eq:componentCtwo}, the family consisting of $u$ and the
functions $u_j$ (after discarding finitely many terms) is compact
in $C^2(\overline\Omega)$ and consists of positive solutions.
The exponents $\beta_j=(p_j-1)/3$ lie in a compact subinterval
of $(0,\infty)$. Lemma~\ref{lem:collar} therefore supplies a
common boundary collar on which all the transformed Hessians
are positive definite. On the complementary compact subset
$K\Subset\Omega$, the Hessian of $u^{-(p-1)/3}$ has a strictly
positive lower eigenvalue, and $u$ is bounded away from zero.
The convergence \eqref{eq:componentCtwo} and $p_j\to p$ give
uniform convergence of the transformed Hessians on $K$.
They are therefore positive definite on $K$ for all sufficiently
large $j$, proving relative openness.

\emph{Relative closedness.}
Suppose $(p_j,u_j)\in\mathscr G$ and
$(p_j,u_j)\to(p,u)\in\mathscr S$. By \eqref{eq:componentCtwo},
on each compact subset of $\Omega$ we have
\[
\Hess(u_j^{-(p_j-1)/3})
 \longrightarrow\Hess(u^{-(p-1)/3})
\quad\text{uniformly}.
\]
Thus $\Hess(u^{-(p-1)/3})\ge0$ throughout $\Omega$.
Since $p\le P$, Lemma~\ref{lem:Schur} gives the strict inequality,
and $(p,u)\in\mathscr G$. This proves relative closedness.

Finally, the intersection of $\mathscr G$ with a connected
component of $\mathscr S$ is both relatively open and relatively
closed in that component, so it is either empty or the entire
component.
\end{proof}

\begin{proof}[Proof of Theorem~\ref{thm:main}]
Choose $\varepsilon_\Omega>0$ so small that
$1+\varepsilon_\Omega<P$ and the conclusions of
Propositions~\ref{prop:near1unique}, \ref{prop:near1nondeg},
and \ref{prop:near1convex} all hold for
$1<p<1+\varepsilon_\Omega$.
For these exponents let $u_p$ denote the unique positive solution.
The map $p\mapsto u_p$ is continuous in $C_0(\overline\Omega)$:
if $p_j\to p\in(1,1+\varepsilon_\Omega)$, the compactness
argument in Proposition~\ref{prop:openclosed} yields a convergent
subsequence of $u_{p_j}$, and its limit must equal $u_p$ by
uniqueness. Hence $\Gamma$ is connected and is contained in
$\mathscr G$.

Fix $p_c\in(1,1+\varepsilon_\Omega)$. For each
$p_0\in(1,p_c)$, Proposition~\ref{prop:continuum} furnishes a
compact connected set $\mathscr C_{p_0}\subset\mathscr S$ with
\[
\pi_p(\mathscr C_{p_0})=[p_0,P].
\]
Because $p_c\in[p_0,P]$ and the solution at $p_c$ is unique,
\[
(p_c,u_{p_c})\in\mathscr C_{p_0}\cap\Gamma.
\]
Consequently,
\[
\mathscr A:=\Gamma\cup
\bigcup_{1<p_0<p_c}\mathscr C_{p_0}
\]
is connected. Its parameter projection is $(1,P]$: the
near-linear curve covers parameters arbitrarily close to $1$,
and each $\mathscr C_{p_0}$ reaches $P$. Therefore the connected
component $\mathscr K$ of $\mathscr S$ containing $\mathscr A$
satisfies
\[
\pi_p(\mathscr K)=(1,P].
\]
Since $\mathscr G$ is relatively open and closed in $\mathscr S$
by Proposition~\ref{prop:openclosed}, and $\Gamma\subset\mathscr G$,
connectedness implies
\[
\mathscr K\subset\mathscr G.
\]
This proves \eqref{eq:mainconv} for every point of the component,
not merely for points of a selected Leray--Schauder continuum.
The component $\mathscr K$ is closed in $\mathscr S$, and the
compactness on every parameter strip $[a,P]$ established in the
proof of Proposition~\ref{prop:openclosed} gives the asserted
compactness of $\mathscr K\cap([a,P]\times X)$.

Finally, fix any $(p,u)\in\mathscr K$ and write
\[
\beta=(p-1)/3,\qquad v=u^{-\beta}.
\]
As $u\to0$ at the boundary, $v\to+\infty$ there and attains
an interior minimum. Since
\[
dv=-\beta u^{-\beta-1}du,
\]
$u$ and $v$ have exactly the same critical points. The strict
convexity $\Hess v>0$ along geodesics in $\Omega$ implies that
$v$ has only one critical point, necessarily its minimum; thus
$u$ has a unique maximum. At this point,
\[
\Hess v=-\beta u^{-\beta-1}\Hess u,
\]
so $\Hess u<0$ and the maximum is nondegenerate.

For $0<t<\max_\Omega u$ we have
\[
\{u>t\}=\{v<t^{-\beta}\}.
\]
These sets are geodesically convex by the strict convexity of $v$.
Every such level is regular because the sole critical point of
$u$ is its maximum. If $X$ is tangent to a level of $v$ and
$\nu=\grad v/|\grad v|$ is the outward normal of its sublevel
set, then
\[
\II(X,X)=\frac{\Hess v(X,X)}{|\grad v|}>0.
\]
Thus all positive superlevel sets are strictly geodesically convex.
\end{proof}

\section{Strict concavity for spherical torsion}
\label{sec:torsionglobal}

For the torsion problem we set $v=-\sqrt u$. The proof first excludes null directions of a nonnegative Hessian and then propagates strict convexity along a deformation from a geodesic ball to $\Omega$.

\subsection{Local strictification}

\begin{lemma}
\label{lem:torsionStrict}
Let $U\subset\Sn$ be open and let $v\in C^4(U)$ satisfy
\begin{equation}
\label{eq:torsionTransformed}
v<0,
\qquad
\Lap v=-\frac{|\grad v|^2+\frac12}{v}.
\end{equation}
If $\Hess v\ge0$ in $U$, then $\Hess v>0$ in $U$.
\end{lemma}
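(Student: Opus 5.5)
The plan is to argue by contradiction through the Schur-complement estimate of Lemma~\ref{lem:SchurMaster}, applied with
\[
\Phi(v,q)=-\frac{q+\frac12}{v},\qquad q=|\grad v|^2,
\]
in the same way as in the proof of Lemma~\ref{lem:Schur}. First check the hypotheses. Since $v<0$, \eqref{eq:torsionTransformed} gives $F=\Lap v=\Phi(v,q)=(q+\tfrac12)/(-v)>0$. Also $\Phi$ is smooth on $\{v<0\}$, so it is $C^2$ there. Suppose $\Hess v\ge0$ but $\Hess v(x_0)$ has a null direction, and let $r=\rank\Hess v(x_0)$. The case $r=0$ is impossible, because then $F(x_0)=\tr\Hess v(x_0)=0$. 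Hence $1\le r\le n-1$, and Lemma~\ref{lem:SchurMaster} supplies a nonnegative $\phi$ with $\phi(x_0)=0$. Therefore $\Delta\phi(x_0)\ge0$ by \eqref{eq:masterMin}.

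Next compute the derivatives of $\Phi$ and rewrite them using the identity $q+\tfrac12=-Fv$:
\[
\Phi_q=-\frac1v,\qquad
\Phi_v=\frac{q+\frac12}{v^2}=-\frac Fv,\qquad
\Phi_{vv}=-\frac{2(q+\frac12)}{v^3}=\frac{2F}{v^2}.
\]
The zeroth-order term in \eqref{eq:MasterSchur} is
\[
2\Phi_qq-2F=-\frac{2q}{v}+\frac{2q+1}{v}=\frac1v<0.
\]
The coefficient of $v_1^2$ is
\[
\frac{2F}{v^2}+\frac2v-\frac2F\Bigl(r-\frac Fv\Bigr)^2 .
\]
Expanding the square, the $2F/v^2$ terms cancel, and the coefficient reduces to
\[
\frac{4r+2}{v}-\frac{2r^2}{F}.
\]
Both pieces are negative, since $v<0$ and $F>0$.

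Substituting into \eqref{eq:MasterSchur} gives $\Delta\phi(x_0)\le\bigl[\tfrac{4r+2}{v}-\tfrac{2r^2}{F}\bigr]v_1^2+\tfrac1v<0$, which contradicts $\Delta\phi(x_0)\ge0$. Hence $\Hess v$ has no null direction anywhere, so $\Hess v>0$ in $U$.

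The only delicate point is the algebra: one must substitute $q+\tfrac12=-Fv$ consistently so that the $2F/v^2$ terms visibly cancel, as in Proposition~\ref{prop:generalPowerSchur}. Unlike the Lane--Emden case, no dimensional restriction on $r$ arises, because every surviving term carries a factor $1/v$ with $v<0$. This is why the torsion statement holds in all dimensions.
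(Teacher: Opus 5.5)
Your proposal is correct and matches the paper's proof essentially line for line. It uses the same choice $\Phi(v,q)=-(q+\frac12)/v$ in Lemma~\ref{lem:SchurMaster}, the same cancellation of the $2F/v^2$ terms after substituting $q+\frac12=-Fv$, and reaches the same bound $\Delta\phi(x_0)\le\bigl(\frac{4r+2}{v}-\frac{2r^2}{F}\bigr)v_1^2+\frac1v<0$, which contradicts \eqref{eq:masterMin}. You also explicitly rule out the rank-zero case via $F=\tr\Hess v(x_0)>0$, which is needed because Lemma~\ref{lem:SchurMaster} assumes $1\le r\le n-1$ and which the paper leaves implicit.
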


\begin{proof}
Write $q=|\grad v|^2$ and
\[
\Phi(v,q)=-\frac{q+\frac12}{v}.
\]
If the Hessian had a null direction at a point where its rank is $r$, Lemma~\ref{lem:SchurMaster} gives
\[
\Delta\phi(x_0)
\le
\left[
\frac{2F}{v^2}+\frac2v
-\frac2F\left(r-\frac Fv\right)^2
\right]v_1^2+\frac1v.
\]
Expanding the square and using $F=-(q+\frac12)/v$ gives
\begin{equation}
\label{eq:torsionSchurFinal}
\Delta\phi(x_0)
\le
\left(
\frac{4r+2}{v}-\frac{2r^2}{F}
\right)v_1^2+\frac1v<0,
\end{equation}
 because $v<0$ and $F>0$. This contradicts \eqref{eq:masterMin}.
\end{proof}

Let $u_\Omega$ denote the torsion function of a domain $\Omega$ and set
\[
v_\Omega=-\sqrt{u_\Omega}.
\]
Then
\[
\Lap v_\Omega=-\frac{|\grad v_\Omega|^2+\frac12}{v_\Omega}>0,
\]
and
\begin{equation}
\label{eq:torsionChain}
4u_\Omega^{3/2}\Hess(-\sqrt{u_\Omega})
=du_\Omega\otimes du_\Omega-2u_\Omega\Hess u_\Omega.
\end{equation}
The transformed function is smooth in the interior but need not be $C^2$ up to the boundary, so the boundary estimates are formulated for $u_\Omega$ and transferred through \eqref{eq:torsionChain}.

\subsection{Geometry of the domain deformation}

\begin{lemma}
\label{lem:hemisphere}
If $\Omega\subsetneq\Sn$ is smooth and uniformly geodesically convex, then $\overline\Omega$ is compactly contained in an open hemisphere.
\end{lemma}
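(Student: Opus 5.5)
The plan is to find a point $e\in\Sn$ with $\langle x,e\rangle>0$ for every $x\in\overline\Omega$. Then $\overline\Omega$, being compact, lies in the open hemisphere $H_e=\{x:\langle x,e\rangle>0\}$. We work with the ambient embedding $\Sn\subset\R^{n+1}$.

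\textbf{Step 1: uniform convexity and the spherical Gauss map.} At each boundary point $x$, consider the great hypersphere tangent to $\partial\Omega$ there, namely $S_x=\{y:\langle y,\nu(x)\rangle=0\}$, where $\nu(x)$ is the outward unit normal viewed as a vector of $\R^{n+1}$ orthogonal to $x$. Set $h(y)=\langle y,\nu(x)\rangle$. This is a first spherical harmonic, so its Hessian is $-h\,g$, and $h$ vanishes on $S_x$. Since $\II\ge\kappa_0 g_{\partial\Omega}>0$, the boundary curves strictly away from $S_x$ toward $\Omega$. Hence $h<0$ on $\partial\Omega$ near $x$, except at $x$ itself.

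To make this global, use geodesic convexity of $\overline\Omega$. Suppose some $y\in\overline\Omega\setminus\{\pm x\}$ had $h(y)\ge0$. The minimizing geodesic from $x$ to $y$ stays in $\overline\Omega$. Its initial velocity then has nonnegative $\nu$-component, since $h(\gamma(t))=\sin(t)\langle\dot\gamma(0),\nu\rangle$. Local strict convexity forces this component to be negative for nontrivial directions, unless the geodesic leaves $\overline\Omega$ immediately. This contradiction gives the supporting-hemisphere statement
\[
\overline\Omega\setminus\{x\}\subset\{y:\langle y,\nu(x)\rangle<0\}.
\]
The antipode $-x$ has to be excluded separately. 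It is not in $\overline\Omega$, because $\Omega\subsetneq\Sn$ is convex and contains a point together with a neighborhood interior to $\{h<0\}$, which a short argument rules out.

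\textbf{Step 2: choosing the pole.} The set $C=\{-\nu(x):x\in\partial\Omega\}$ consists of inward poles. By Step 1, for every $y\in\overline\Omega$ we have $\langle y,-\nu(x)\rangle\ge0$, with equality only when $y=x$. Take the Euclidean barycenter $b=\int_{\partial\Omega}(-\nu)\,dA$ and set $e=b/|b|$. For $y\in\overline\Omega$,
\[
\langle y,b\rangle=\int_{\partial\Omega}\langle y,-\nu(x)\rangle\,dA(x)>0,
\]
because the integrand is nonnegative and vanishes at most at one point. In particular $b\neq0$. The map $y\mapsto\langle y,b\rangle$ is continuous and positive on the compact set $\overline\Omega$, so it has a positive minimum. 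Hence $\overline\Omega\subset H_e$ compactly.

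\textbf{Main obstacle.} The hard part is Step 1, specifically upgrading local strict convexity to the global supporting inequality. This is essentially Hadamard's theorem adapted to the sphere. I would first use the smooth, positive-definite $\II$ to get a local strict inequality along $\partial\Omega$. I would then argue via connectedness of $\partial\Omega$ (it bounds a convex set, so it is a topological sphere) together with the geodesic convexity of $\overline\Omega$. Convexity also handles nonantipodality: if $\overline\Omega$ contained antipodal points, minimizing geodesics between them would not be unique, which would contradict the convention for geodesic convexity.
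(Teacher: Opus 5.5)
Your proof is correct and takes a different route from the paper.

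The paper passes to the cone $C=\{tx:t\ge0,\ x\in\overline\Omega\}\subset\R^{n+1}$. Geodesic convexity makes $C$ convex. A line in $C$ would make $\partial C$ translation-invariant and force a great-circle arc into $\partial\Omega$, which $\II>0$ excludes. So $C$ is pointed, $0\notin\operatorname{conv}\overline\Omega$, and the minimal-norm point $z$ of $\operatorname{conv}\overline\Omega$ serves as the pole via $z\cdot x\ge|z|^2$.

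You instead prove a supporting-hemisphere property at every boundary point. A geodesic leaving $x$ with nonnegative $\nu$-component exits $\overline\Omega$ immediately, using $\II>0$ in the tangential case; combined with $h(\gamma(t))=\sin t\,\langle\dot\gamma(0),\nu\rangle$ and geodesic convexity for the nonantipodal pair $(x,y)$, this gives the property. You then write the pole explicitly as $b=-\int_{\partial\Omega}\nu\,dA$. Your route gives an explicit pole and the supporting property as a by-product, and it avoids the cone-boundary identification and the separation step. The paper's argument, by contrast, needs strict convexity only to rule out geodesic segments in $\partial\Omega$.

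Two corrections:
\begin{itemize}
\item You do not need $-x\notin\overline\Omega$, and your closing justification for it is invalid under the paper's convention, which imposes nothing on antipodal pairs. Since $\langle -x,\nu(x)\rangle=0$, the Step~2 integrand is still nonnegative and vanishes only on $\{y,-y\}\cap\partial\Omega$. That is a null set because $n\ge2$, so $\langle y,b\rangle>0$ on $\overline\Omega$ still follows, and this inequality itself excludes antipodal pairs.
\item The \emph{main obstacle} you describe is already settled by your own Step~1 argument for $y\ne\pm x$. No Hadamard-type theorem or connectedness of $\partial\Omega$ is required.
\end{itemize}
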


\begin{proof}
Put $K=\overline\Omega$ and consider the cone
\[
C=\{tx:t\ge0,\ x\in K\}\subset\mathbb R^{n+1}.
\]
To check that $C$ is convex, let $a,b\ge0$ and $x,y\in K$.
If $ax+by\ne0$ and $x,y$ are nonantipodal, the normalization
$(ax+by)/|ax+by|$ belongs to the shorter great-circle segment from
$x$ to $y$; hence it belongs to $K$. For antipodal points the sum
lies on one of the two given rays (or vanishes). It follows that
$ax+by\in C$ in every case. Moreover, $C$ is closed because $K$
is compact and $|tx|=t$, and it has nonempty interior since
$\Omega$ is open.

We claim that $C$ contains no line. Indeed, if $\mathbb R\ell\subset C$
for some $\ell\ne0$, convexity and the cone property imply
$C+\mathbb R\ell=C$: every $z\in C$ and every $s\in\mathbb R$
satisfy $z+s\ell\in C$, and the reverse inclusion is immediate.
For a unit vector $x\in K$, the conical structure yields
\[
x\in\partial K\quad\Longleftrightarrow\quad x\in\partial C.
\]
Here $\partial C$ is the Euclidean boundary, while $\partial K$
is the boundary relative to the sphere. To see this equivalence,
intersect a conical neighborhood of the ray $\{rx:r>0\}$ with
$\Sn$; radial normalization is locally a product chart near $x$.
Choose $x\in\partial K$ not parallel to $\ell$; such a choice exists
because $n\ge2$ and the smooth hypersurface $\partial\Omega$
cannot be contained in the two-point set
$\{\ell/|\ell|,-\ell/|\ell|\}$. Translation by $s\ell$
preserves both $C$ and $\partial C$, so for small $s$
\[
\gamma(s)=\frac{x+s\ell}{|x+s\ell|}\in\partial K.
\]
Its image is a nonconstant arc of the great circle
$\Sn\cap\operatorname{span}\{x,\ell\}$. Reparameterize by arc length.
The resulting curve is a unit-speed geodesic contained in
$\partial\Omega$, and thus
\[
\II(\dot\gamma,\dot\gamma)
=-\langle\nabla_{\dot\gamma}\dot\gamma,\nu\rangle=0,
\]
contrary to the positive definiteness of $\II$. Hence $C$ is a
\emph{pointed} closed convex cone.

Consequently $0\notin\operatorname{conv}K$. If otherwise
$0=\sum_{j=1}^N b_jx_j$ with $x_j\in K$, $b_j>0$, and
$\sum_jb_j=1$, then
\[
-x_1=\sum_{j=2}^N\frac{b_j}{b_1}x_j\in C,
\qquad x_1\in C,
\]
so $C$ would contain the whole line $\mathbb Rx_1$.
The convex hull $\operatorname{conv}K$ is compact (in finite
dimension). Let $z$ be its point of minimum Euclidean norm.
Then $z\ne0$, and the first variation of
$|z+s(x-z)|^2$ at $s=0^+$ gives
\[
z\cdot x\ge |z|^2\qquad\text{for every }x\in K.
\]
Thus, with $p=z/|z|$ and $\eta=|z|>0$,
\[
p\cdot x\ge\eta>0\qquad(x\in\overline\Omega),
\]
which proves that the closure lies compactly in the open hemisphere
centered at $p$.
\end{proof}

\begin{lemma}
\label{lem:domainPath}
There exists a smooth family of smooth geodesically convex domains $\Omega_t$, $0\le t\le1$, with $\Omega_1=\Omega$ and $\Omega_0$ a geodesic ball of radius less than $\pi/2$.  The entire family is compactly contained in one open hemisphere and has a parameter-independent positive lower bound for the outward second fundamental form.
\end{lemma}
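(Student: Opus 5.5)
The plan is to use Lemma~\ref{lem:hemisphere} to move the whole problem into Euclidean space by gnomonic (central) projection, interpolate convex bodies there, and then project back. By Lemma~\ref{lem:hemisphere}, $\overline\Omega$ lies in the open hemisphere $H_p=\{x\cdot p>0\}$ with $p\cdot x\ge\eta>0$ on $\overline\Omega$. Let $\Pi:H_p\to\R^n\cong p^\perp$ be the gnomonic projection $x\mapsto x/(x\cdot p)-p$. It maps great-circle arcs in $H_p$ to straight segments, so geodesically convex subsets of $H_p$ correspond exactly to convex subsets of $\R^n$. Since $\overline\Omega\subset\{x\cdot p\ge\eta\}$, the image $D:=\Pi(\Omega)$ is a bounded smooth convex body contained in a ball $B_R$, where $R$ depends only on $\eta$.

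The next step is to compare second fundamental forms under $\Pi$. A hypersurface in $H_p$ has positive definite second fundamental form exactly when its image under $\Pi$ does; this holds because $\Pi$ is a projective map preserving geodesics. Quantitatively, on the compact region $\{x\cdot p\ge\eta'\}$ with $\eta'<\eta$, the two forms are comparable with constants depending only on $\eta'$ and the $C^1$ size of $\Pi$ there. One way to see this is to write a hypersurface as the zero set of a defining function. Convexity with the correct sign in both settings is then measured by the restriction of the Hessian of the defining function to the tangent space. The Hessians of $f$ and of $f\circ\Pi$ on tangent vectors differ by a first-order term; that term only multiplies the normal derivative, so it is absorbed into a bounded multiple of the identity and is controlled uniformly. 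Consequently $D$ is uniformly convex, with Euclidean second fundamental form at least $\kappa_1>0$.

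For the Euclidean interpolation, let $h_D$ be the support function of $D$, and translate so that a ball $B_{\rho}(0)\subset D$ sits around a point. Define $D_t$ as the body with support function $h_t=(1-t)\rho'+t\,h_D$, where $\rho'>0$ is small; this is the Minkowski combination $(1-t)B_{\rho'}+tD$. The radii of curvature of $D_t$, viewed as functions of the unit normal, are $(1-t)\rho'+t\cdot(\text{radii of }D)$. They therefore lie between $\min(\rho',1/\kappa_{\max})$ and $\max(\rho',1/\kappa_1)$. This gives uniform two-sided curvature bounds, smooth dependence on $t$, and the inclusion $D_t\subset B_R$. Set $\Omega_t:=\Pi^{-1}(D_t)$. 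Then $\Omega_1=\Omega$, each $\Omega_t$ is smooth and geodesically convex, and all $\Omega_t$ lie in $\{x\cdot p\ge\eta\}$, compactly inside $H_p$. By the comparison step, the spherical second fundamental forms are bounded below by a constant independent of $t$. Finally, $\Omega_0=\Pi^{-1}(B_{\rho'}(c))$ is a geodesic ball only if the ball is centered at $c=0$, the image of $p$; so the translation is chosen with $0$ as the origin. Then $\Pi^{-1}(B_{\rho'}(0))$ is the geodesic ball about $p$ of radius $\arctan\rho'<\pi/2$. This requires $0\in D$, i.e.\ $p\in\Omega$. If that fails, first append a preliminary path that rotates $\Omega$ by a smooth family of isometries, which preserves all the geometric quantities. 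Alternatively, replace $p$ by a point of $\Omega$: the hemisphere about any interior point $y$ of $\Omega$ with $y$ close enough to $p$ still contains $\overline\Omega$ compactly, since the condition $\min_{\overline\Omega}(x\cdot p)>0$ is open in $p$. The second option is the one I would use.

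The main obstacle is the quantitative comparison of second fundamental forms under $\Pi$, which is needed to make the lower bound uniform in $t$. The Minkowski interpolation itself is standard once one works with support functions. To settle the comparison I would compute it directly using $\Pi^{-1}(y)=(p+y)/\sqrt{1+|y|^2}$. Since $\Pi^{-1}$ is a projective map of $\R^n$ into the sphere, it sends lines to geodesics, so both Christoffel-type corrections are proportional to tangent directions. The normal components of second derivatives then transform by a positive conformal factor, up to lower-order terms along the tangent direction, and the comparison follows.
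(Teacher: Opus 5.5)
Your proposal is correct and follows essentially the paper's route: gnomonic projection to a bounded uniformly convex body $D$, transfer of the second fundamental form by a positive factor (your defining-function computation yields exactly the paper's identity $\II_{\Sn}(dP\,\tau,dP\,\sigma)=\II_{\R^n}(\tau,\sigma)/(WL)$, because the projective Christoffel correction vanishes on tangent vectors rather than being merely ``absorbed''), the Minkowski path with $Q_{h_t}=(1-t)\rho' g_{\mathbb S^{n-1}}+tQ_{h_D}$, and projection back, with $\Omega_0$ the geodesic ball of radius $\arctan\rho'$. Your re-centering detour is unnecessary: $(1-t)\overline{B_{\rho'}(0)}+t\overline D$ and its curvature bounds do not require $0\in D$, and the paper imposes no such condition. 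As written, the detour would also need $p\in\overline\Omega$, which holds for the $p$ constructed in the proof of Lemma~\ref{lem:hemisphere} but not for an arbitrary containing hemisphere.
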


\begin{proof}
Rotate the hemisphere from Lemma~\ref{lem:hemisphere} to
$x_{n+1}>0$ and use central projection
\[
G(x)=\frac{(x_1,\dots,x_n)}{x_{n+1}},
\qquad
P(y)=G^{-1}(y)=\frac{(y,1)}{\sqrt{1+|y|^2}}.
\]
The map $G$ sends shorter spherical geodesic segments in the open
hemisphere to Euclidean line segments, and $P$ sends Euclidean line
segments back to those spherical segments. Hence
$D=G(\Omega)$ is bounded, smooth, and Euclidean convex.

If $N$ is the outward Euclidean unit normal to $\partial D$, put
\[
W=\sqrt{1+|y|^2},\qquad
L=\sqrt{1+(y\cdot N)^2},\qquad
\nu=\frac{(N,-y\cdot N)}{L}.
\]
For tangent vectors $\tau,\sigma\in T_y\partial D$, direct
differentiation yields
\begin{equation}
\label{eq:centralII}
\II_{\Sn}(dP\,\tau,dP\,\sigma)
=\frac1{WL}\II_{\mathbb R^n}(\tau,\sigma).
\end{equation}
Indeed,
\[
dP\,\sigma=\frac{(\sigma,0)}W
-\frac{(y,1)(y\cdot\sigma)}{W^3},
\]
and the derivative of the factor $L^{-1}$ pairs trivially with
$dP\,\sigma$, while the terms containing $(y,1)$ cancel.
Thus $D$ has strictly positive Euclidean principal curvatures.

Choose $\rho>0$ and define the Minkowski interpolation
\[
K_t=(1-t)\overline{B_\rho(0)}+t\overline D,
\qquad D_t=\operatorname{int}K_t,\qquad 0\le t\le1.
\]
Let $h$ be the support function of $\overline D$. The positivity
of the shape operator and strict convexity imply that the Gauss map
is a global diffeomorphism. The inverse Gauss parametrization
$Y(\theta)=h(\theta)\theta+\nabla h(\theta)$ then satisfies
\[
Q_h:=\nabla^2_{\mathbb S^{n-1}}h+h g_{\mathbb S^{n-1}}>0,
\qquad dY=Q_h.
\]
The support function of $K_t$ is
$h_t=(1-t)\rho+th$, so
\begin{equation}
\label{eq:torsionSupportUniform}
Q_{h_t}=(1-t)\rho g_{\mathbb S^{n-1}}+tQ_h,
\qquad
q_*g_{\mathbb S^{n-1}}\le Q_{h_t}\le q^*g_{\mathbb S^{n-1}},
\end{equation}
where one may take
\[
q_*:=\min\{\rho,\min_\theta\lambda_{\min}(Q_h(\theta))\}>0,
\quad
q^*:=\max\{\rho,\max_\theta\lambda_{\max}(Q_h(\theta))\}<\infty.
\]
Consequently the maps
$Y_t(\theta)=h_t(\theta)\theta+\nabla h_t(\theta)$
are smooth embeddings depending smoothly on $(t,\theta)$, and
all Euclidean principal curvatures of $\partial D_t$ lie in
$[(q^*)^{-1},q_*^{-1}]$.

We make the uniform inradius explicit. Choose $y_*\in D$ and
$r_D>0$ with $\overline{B_{r_D}(y_*)}\subset D$. Minkowski addition
gives, for all $t\in[0,1]$,
\begin{equation}
\label{eq:torsionInradiusUniform}
\overline{B_{(1-t)\rho+tr_D}(ty_*)}
=(1-t)\overline{B_\rho(0)}
+t\overline{B_{r_D}(y_*)}
\subset K_t.
\end{equation}
Thus $D_t$ contains a ball of radius at least
$r_*:=\min\{\rho,r_D\}>0$, irrespective of $t$; its center is
allowed to move. Also, because $D$ is bounded, there exists
$R_*<\infty$ with $K_t\subset\overline{B_{R_*}(0)}$ for every $t$.
In particular, the family is uniformly bounded, noncollapsing,
and smooth in the support-function parametrization.

Finally put $\Omega_t=P(D_t)$. On $|y|\le R_*$,
\[
WL\le1+R_*^2,
\qquad
|dP\,\tau|_g^2\le|\tau|^2.
\]
Combining these inequalities with
\eqref{eq:centralII} and the Euclidean curvature bound gives
\[
\II_{\partial\Omega_t}(dP\,\tau,dP\,\tau)
\ge\frac{|\tau|^2}{(1+R_*^2)q^*}
\ge \kappa_*|dP\,\tau|_g^2,
\qquad
\kappa_*:=\frac1{(1+R_*^2)q^*}>0.
\]
Thus the spherical second fundamental form is bounded below
uniformly for the entire path. The boundedness of $K_t$ also keeps
every $\overline\Omega_t$ in one compact subset of the chosen open
hemisphere. Since $P(\overline{B_\rho(0)})$ is a geodesic ball of
radius $\arctan\rho<\pi/2$, the required path is constructed.

For later reference, $q_*,q^*,r_*,R_*$ and $\kappa_*$ depend only
on the fixed initial domain, the hemisphere chart, and the chosen
radius $\rho$; none depends on $t$. The smooth family of boundary
parametrizations, the two-sided bounds for $Q_{h_t}$, and the
compact parameter interval provide uniform bounds for finitely
many boundary charts and their inverses. These are the geometric
data used below to choose a common normal tubular radius and to
apply elliptic estimates uniformly.
\end{proof}

Let $u_t$ denote the torsion function on $\Omega_t$.

\subsection{Uniform estimates along the deformation}

For clarity, we use the following convention throughout this subsection.
The geometric constants $r_*,R_*,q_*,q^*$, $\kappa_*$ and the
uniform tubular radius depend only on the fixed smooth deformation
$\{\Omega_t\}_{0\le t\le1}$ constructed in
Lemma~\ref{lem:domainPath} (including the chosen hemisphere chart
and radius $\rho$). The elliptic constants depend in addition only
on the ambient metric and on a fixed H\"older exponent
$0<\alpha<1$. In particular, the Hopf constant $m$ and the
uniform $C^{2,\alpha}$ bounds are chosen before any boundary
collar width; no constant in this subsection is allowed to depend
on $t$ or on an individual torsion solution $u_t$.

\begin{lemma}
\label{lem:torsionDomainDependence}
Locally in $t$, there are smooth diffeomorphisms
\[
\Psi_t:\Omega_{t_0}\to\Omega_t,
\qquad \Psi_{t_0}=\operatorname{id},
\]
such that $u_t\circ\Psi_t$ is continuous in $C^{2,\alpha}(\overline\Omega_{t_0})$ for every fixed $0<\alpha<1$.  Higher-order local regularity and parameter continuity hold as well.  In particular there are uniform bounds
\begin{equation}
\label{eq:torsionUniformDirichlet}
-\partial_{\nu_t}u_t\ge m>0\quad\text{on }\partial\Omega_t,
\qquad
\sup_t\|u_t\|_{C^{2,\alpha}}<\infty,
\end{equation}
where the norms are measured in a fixed finite family of smoothly varying charts.
\end{lemma}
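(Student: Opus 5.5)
The plan is to pull the Dirichlet problem on $\Omega_t$ back to the fixed reference domain $\Omega_{t_0}$ and use the implicit function theorem. The resulting pulled-back operator has coefficients depending smoothly on $t$. Local continuity then upgrades to uniform bounds by compactness of $[0,1]$.

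\emph{Construction of $\Psi_t$.} Using the support-function parametrization from Lemma~\ref{lem:domainPath}, the boundaries $\partial D_t=Y_t(\mathbb S^{n-1})$ vary smoothly in $t$. Near $t_0$, write $\partial D_t$ as a normal graph over $\partial D_{t_0}$: $y+\eta(t,y)N_{t_0}(y)$, with $\eta$ smooth and $\eta(t_0,\cdot)=0$. The tubular radius is uniform because of the two-sided curvature bounds \eqref{eq:torsionSupportUniform}. Let $\chi$ be a cutoff supported in a fixed collar, and set
\[
\widetilde\Psi_t(y)=y+\chi(\operatorname{dist}(y,\partial D_{t_0}))\,\eta(t,\pi(y))\,N_{t_0}(\pi(y)).
\]
For $t$ near $t_0$ this is a diffeomorphism $\overline D_{t_0}\to\overline D_t$ depending smoothly on $t$ in every $C^k$. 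Conjugating by the central projection $P$ gives $\Psi_t$ with $\Psi_{t_0}=\mathrm{id}$.

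\emph{Continuity of the pulled-back solutions.} Set $w_t=u_t\circ\Psi_t$. It solves $L_tw_t=-1$ in $\Omega_{t_0}$ with $w_t=0$ on $\partial\Omega_{t_0}$. Here $L_t$ is the Laplacian of the pullback metric $\Psi_t^*g$, a uniformly elliptic operator whose coefficients depend smoothly on $t$ in $C^{k}(\overline\Omega_{t_0})$. Consider the map
\[
(t,w)\mapsto L_tw+1,\qquad C^{2,\alpha}_0(\overline\Omega_{t_0})\to C^{\alpha}(\overline\Omega_{t_0}).
\]
It is smooth, and its $w$-derivative at $t_0$ is $\Lap$ with Dirichlet conditions. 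This derivative is an isomorphism by Schauder theory, since $\lambda_1>0$ and the maximum principle holds. Alternatively, one can estimate $w_t-w_{t_0}$ directly via $L_t(w_t-w_{t_0})=(L_{t_0}-L_t)w_{t_0}$, which gives Lipschitz dependence in $C^{2,\alpha}$. The same argument in $C^{k,\alpha}$ gives the higher-order statements.

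\emph{Uniform bounds.} First, $\|u_t\|_{C^{2,\alpha}}$ measured in the charts is continuous in $t$ by the previous step. Covering $[0,1]$ by finitely many such neighborhoods gives the uniform bound. Second, each $u_t$ satisfies Hopf's lemma, so $-\partial_{\nu_t}u_t>0$ on $\partial\Omega_t$. The quantity $\min_{\partial\Omega_t}(-\partial_{\nu_t}u_t)$ is continuous in $t$, since $\nu_t\circ\Psi_t$ and $dw_t$ vary continuously. A positive continuous function on the compact interval $[0,1]$ has a positive minimum $m$.

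\emph{Main obstacle.} The main obstacle is keeping the collar construction uniform. One needs a fixed tubular radius and finitely many charts valid for all $t$, so that the norms in \eqref{eq:torsionUniformDirichlet} make sense. I would get this from the uniform bounds $q_*,q^*,R_*$ in Lemma~\ref{lem:domainPath}.
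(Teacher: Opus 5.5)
Your proposal is correct and follows essentially the same route as the paper's proof, which is carried out in Lemma~\ref{lem:quantDomainSmoothness}. Both use normal-graph diffeomorphisms with a cutoff over the reference boundary, and both treat the pulled-back Dirichlet Laplacian on $\Omega_{t_0}$ as a perturbation of $L_{t_0}$; the paper writes this as a Neumann series for $L_t^{-1}$, which is equivalent to your implicit-function or difference estimate. Both then take a finite subcover of $[0,1]$ for the uniform $C^{2,\alpha}$ bound, and use Hopf's lemma plus compactness for $m$. The only cosmetic difference is that you build the normal graphs in the Euclidean chart $D_t$ and conjugate by $P$, whereas the paper uses the spherical normal exponential map directly.
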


\begin{proof}
Lemma~\ref{lem:domainPath} constructs smooth boundary embeddings
with a uniform positive lower bound for their second fundamental forms.
Lemma~\ref{lem:quantDomainSmoothness} in
Appendix~\ref{app:technical} constructs the local normal-graph
identifications $\Psi_t$ and proves convergence of the pulled-back
metrics and of the corresponding Dirichlet inverses on the fixed domain
$\Omega_{t_0}$. Applied to $w_t=u_t\circ\Psi_t$, it gives the asserted
$C^{2,\alpha}$ parameter continuity and uniform higher-order bounds.
For each fixed $(t,p)$ with $p\in\partial\Omega_t$, Hopf's lemma
gives $-\partial_{\nu_t}u_t(p)>0$. The pulled-back solutions and
normals depend continuously on $(t,p)$, and the parameterized
boundary is compact; hence the positive Hopf lower bound $m$
may be chosen uniformly in $t$ and $p$. A finite cover of
$[0,1]$ likewise gives uniform $C^{2,\alpha}$ bounds, including
bounds for the gradients and Hessians in the common family of charts.
These are the elliptic constants specified at the beginning
of this subsection, and they are uniform for $t\in[0,1]$.
\end{proof}

\begin{lemma}
\label{lem:torsionCollar}
There exists $\delta>0$, independent of $t$, such that
\[
\Hess(-\sqrt{u_t})>0
\qquad\text{on }\{x\in\Omega_t:\dist(x,\partial\Omega_t)<\delta\}
\]
for every $t\in[0,1]$.
\end{lemma}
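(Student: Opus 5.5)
The plan is to adapt the proof of Lemma~\ref{lem:collar} to the torsion transform, using the identity \eqref{eq:torsionChain}. This identity shows that $\Hess(-\sqrt{u_t})>0$ at a point is equivalent to positive definiteness of the quadratic form
\[
Q_t:=du_t\otimes du_t-2u_t\Hess u_t .
\]
This is the same structure as \eqref{eq:powerHess} with $\beta+1$ replaced by $1$ and $u\Hess u$ replaced by $2u\Hess u$. So it suffices to produce constants $a_0,b_0,C_0,C_1,\delta$ independent of $t$ for which the collar estimate \eqref{eq:collarSquare} goes through.

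\textbf{Uniform constants.} From Lemma~\ref{lem:torsionDomainDependence} I take the uniform Hopf bound $-\partial_{\nu_t}u_t\ge m$ and the uniform $C^{2,\alpha}$ bounds. These give $|\Hess u_t|\le C_0$ and $|\grad u_t|\le C_1$, hence $u_t(x)\le C_1\dist(x,\partial\Omega_t)$. The H\"older modulus of $\grad u_t$, together with a common tubular radius from Lemma~\ref{lem:domainPath}, gives $|\grad u_t|\ge m/2$ on a collar of width $\delta_1$ independent of $t$.

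On $\partial\Omega_t$, the boundary identity \eqref{eq:boundaryHessIdentity} combined with the uniform bound $\II\ge\kappa_*$ from Lemma~\ref{lem:domainPath} gives
\[
\Hess u_t(\tau,\tau)\le -m\kappa_*|\tau|^2 .
\]
The key step is to extend this to points at distance less than $\delta_2$ from the boundary, for all $\tau\perp\grad u_t$, in the form $\Hess u_t(\tau,\tau)\le-b_0|\tau|^2$ with $b_0=m\kappa_*/2$. I will do this directly and quantitatively rather than by contradiction. Let $x$ lie in the collar with nearest boundary point $z$, so $\dist(x,z)<\delta_2$. The $C^{0,\alpha}$ control on $\Hess u_t$ and $\grad u_t$ in the common charts, and the uniform smoothness of the normal field $\nu_t$, bound the change in $\Hess u_t$ between $x$ and $z$ by $C\delta_2^\alpha$. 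They also bound the angle between $\grad u_t(x)$ and $-\nu_t(z)$ by $C\delta_2^\alpha/m$. Therefore a unit vector $\tau\perp\grad u_t(x)$ differs from a boundary-tangent vector by $O(\delta_2^\alpha)$, and the boundary estimate survives with half the constant once $\delta_2$ is small.

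If I prefer a contradiction argument, it also works, since the family $\{u_t\circ\Psi_t\}$ is compact in $C^2$ by Lemma~\ref{lem:torsionDomainDependence}. One takes $t_j\to t_*$, points $x_j$ near $\partial\Omega_{t_j}$, pulls back by $\Psi_{t_j}$ from a finite cover of $[0,1]$, and passes to the limit as in Lemma~\ref{lem:collar}.

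\textbf{Conclusion.} Write $X=\tau+sN$ with $N=\grad u_t/|\grad u_t|$. The same Young-inequality computation as in Lemma~\ref{lem:collar} gives
\[
Q_t(X,X)\ge b_0u_t|\tau|^2+\Bigl[\tfrac{m^2}{4}-\bigl(2C_0+\tfrac{8C_0^2}{b_0}\bigr)u_t\Bigr]s^2 .
\]
Choosing $\delta\le\min\{\delta_1,\delta_2\}$ with $\bigl(2C_0+8C_0^2/b_0\bigr)C_1\delta<m^2/8$ makes this positive for $X\ne0$, since $u_t>0$ in the interior. By \eqref{eq:torsionChain}, $\Hess(-\sqrt{u_t})>0$ on the collar.

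\textbf{Main obstacle.} The hardest step is the uniform tangential estimate near the boundary. Distances and tangent spaces must be compared across different domains $\Omega_t$. The plan handles this through the fixed finite chart family and the diffeomorphisms $\Psi_t$, which ensure that the modulus of continuity of $\Hess u_t$, and the relation between $\grad u_t$ and the normal $\nu_t$ at the nearest boundary point, are controlled independently of $t$.
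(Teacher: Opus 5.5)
Your proposal is correct and follows the paper's proof essentially step by step. Both arguments use the same ingredients in the same order:
\begin{enumerate}
\item the uniform Hopf and $C^{2,\alpha}$ bounds from Lemma~\ref{lem:torsionDomainDependence};
\item the boundary identity $\Hess u_t(\tau,\tau)\le -m\kappa_*|\tau|^2$ for $\tau$ tangent to $\partial\Omega_t$;
\item a direct quantitative transfer of this tangential bound, via parallel transport to the nearest boundary point, to all $\tau\perp\grad u_t$ in a common collar (the paper bounds the normal component of the transported $\tau$ by $(C/m)d$ using the Hessian bound, while you use the gradient modulus);
\item the same Young-inequality estimate of $du\otimes du-2u\Hess u$ on $X=\tau+sN$, combined with $u_t\le C_1\dist(x,\partial\Omega_t)$.
\end{enumerate}
Your constant $8C_0^2/b_0$ could be sharpened to $4C_0^2/b_0$, but the weaker bound is still valid.
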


\begin{proof}
Write $m>0$ for the uniform Hopf constant in
\eqref{eq:torsionUniformDirichlet} and $\kappa_*>0$ for the
curvature constant in Lemma~\ref{lem:domainPath}. At a boundary
point $p\in\partial\Omega_t$, the tangential boundary condition
$u_t=0$ gives
\begin{equation}
\label{eq:torsionBoundaryHess}
\Hess u_t(p)(w,w)=\partial_{\nu_t}u_t(p)\,
\II_t(w,w)\le-m\kappa_*|w|^2,
\qquad w\in T_p\partial\Omega_t.
\end{equation}
By Lemma~\ref{lem:torsionDomainDependence}, smoothness of the
parameterized boundaries, and compactness of $[0,1]$, there is a
common normal tubular neighborhood of radius $\delta_{\rm tub}>0$.
In this neighborhood choose constants $C\ge1$, $C_1\ge1$ and
$C_H>0$, independent of $t$, such that
\[
|\Hess u_t|\le C,\qquad |\grad u_t|\le C_1,
\]
and the Hessians obey the uniform parallel-transport estimate
\begin{equation}
\label{eq:torsionHessHolder}
\bigl|\Hess u_t(x)-\mathcal P_{p\to x}
(\Hess u_t(p))\bigr|\le C_Hd^\alpha,
\quad d=\dist(x,\partial\Omega_t),
\end{equation}
where $p$ is the nearest boundary point to $x$ and
$\mathcal P_{p\to x}$ transports tensors along the minimizing
normal geodesic. The last estimate follows from the uniform
$C^{2,\alpha}$ bounds in smoothly varying boundary charts;
parallel transport and the charts are uniformly controlled on
the common tube.

We give the quantitative passage from boundary tangent spaces to
nearby level-set tangent spaces. Let $T:T_x\Sn\to T_p\Sn$
be parallel transport, and let $\tau\in T_x\Sn$ satisfy
$\tau\perp\grad u_t(x)$. Along the normal geodesic,
$|\nabla_{\dot\gamma}\grad u_t|\le C$, so
\[
|T\grad u_t(x)-\grad u_t(p)|\le Cd.
\]
Since $\grad u_t(p)=-h_t(p)\nu_t(p)$ with $h_t(p)\ge m$,
orthogonality gives
\[
|\langle T\tau,\nu_t(p)\rangle|
\le\frac{C}{m}d|\tau|.
\]
Decompose $T\tau=w_T+w_\nu\nu_t(p)$, with
$w_T\in T_p\partial\Omega_t$, and use
\eqref{eq:torsionBoundaryHess} together with
$|\Hess u_t(p)|\le C$. We obtain
\begin{align*}
\Hess u_t(x)(\tau,\tau)
&\le -m\kappa_*|\tau|^2
 +\Bigl[C_Hd^\alpha+\frac{2C^2}{m}d
 +(C+m\kappa_*)\frac{C^2}{m^2}d^2\Bigr]|\tau|^2.
\end{align*}
Choose $0<\delta_0<\delta_{\rm tub}$ small enough that
\begin{equation}
\label{eq:torsionCollarDelta0}
C_H\delta_0^\alpha+\frac{2C^2}{m}\delta_0
 +(C+m\kappa_*)\frac{C^2}{m^2}\delta_0^2
\le\frac{m\kappa_*}{2},
\qquad
C\delta_0\le\frac m2.
\end{equation}
Therefore, throughout $0<d<\delta_0$,
\begin{equation}
\label{eq:torsionLevelTangent}
|\grad u_t|\ge a:=\frac m2,
\qquad
\Hess u_t(\tau,\tau)\le-b|\tau|^2
\quad(\tau\perp\grad u_t),
\qquad b:=\frac{m\kappa_*}{2}.
\end{equation}
Both the tube and the constants are independent of $t$.

Suppress $t$ and let $N=\grad u/|\grad u|$. Write
$X=\tau+sN$, with $\tau\perp N$. From
\eqref{eq:torsionChain}, \eqref{eq:torsionLevelTangent}, and
$|\Hess u|\le C$,
\[
\begin{aligned}
4u^{3/2}\Hess(-\sqrt u)(X,X)
&=du(X)^2-2u\Hess u(X,X)\\
&\ge 2bu|\tau|^2-4Cu|s||\tau|+(a^2-2Cu)s^2\\
&\ge bu|\tau|^2+
\left[a^2-\left(2C+\frac{4C^2}{b}\right)u\right]s^2.
\end{aligned}
\]
Along a minimizing normal segment from $p\in\partial\Omega_t$
to $x$, the fundamental theorem of calculus and
$|\grad u_t|\le C_1$ give
\[
0<u_t(x)\le C_1\dist(x,\partial\Omega_t).
\]
Thus we may fix \emph{one} width $\delta$, valid for the entire
family, satisfying
\begin{equation}
\label{eq:torsionExplicitCollarWidth}
0<\delta<\min\left\{\delta_0,\delta_{\rm tub},
\frac{a^2}{2C_1(2C+4C^2/b)}\right\}.
\end{equation}
The square bracket is then at least $a^2/2$, and hence
\begin{equation}
\label{eq:torsionCollarEstimate}
\Hess(-\sqrt u)(X,X)
\ge\frac{b}{4\sqrt u}|\tau|^2+
\frac{a^2}{8u^{3/2}}s^2>0
\qquad(X\ne0).
\end{equation}
The constants $m,\kappa_*,C,C_1,C_H$ and $\delta_{\rm tub}$
are fixed uniformly along the domain path. The order of choices
is $\delta_0$ from \eqref{eq:torsionCollarDelta0}, then
$a=m/2$ and $b=m\kappa_*/2$, and finally $\delta$ from
\eqref{eq:torsionExplicitCollarWidth}. These choices depend only
on the domain path and the elliptic estimates, not on any
prior convexity of the transformed solutions.
\end{proof}

\subsection{Continuation along the domain path}

\begin{lemma}
\label{lem:torsionBall}
If $\Omega_0=B_R(p)$ with $R<\pi/2$, then its torsion function satisfies
\[
\Hess u_0<0,
\qquad
\Hess\sqrt{u_0}<0
\qquad\text{in }\Omega_0.
\]
\end{lemma}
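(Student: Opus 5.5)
The plan is to compute everything explicitly for the radial solution and then read off both inequalities. The second one, $\Hess\sqrt{u_0}<0$, will follow from the first with no further work.

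\emph{Reduction to an ODE.} By uniqueness of the torsion function and rotational invariance of $B_R(p)$, $u_0$ is radial: $u_0=w(r)$ with $r=\dist(\cdot,p)$. In geodesic polar coordinates $\Lap w=w''+(n-1)\cot r\,w'$. Hence $-\Lap u_0=1$ becomes $(\sin^{n-1}r\,w')'=-\sin^{n-1}r$. Imposing regularity at $r=0$ gives
\[
w'(r)=-\frac{I(r)}{\sin^{n-1}r},\qquad I(r):=\int_0^r\sin^{n-1}s\,ds,
\]
so that $w'(r)<0$ for $0<r<R$.

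\emph{Hessian eigenvalues.} For a radial function on the sphere, $\Hess w(r)$ has two eigenvalues away from the center:
\begin{itemize}
\item $w''(r)$ in the radial direction $\partial_r$;
\item $w'(r)\cot r$ on the orthogonal complement, with multiplicity $n-1$.
\end{itemize}
The second is the standard formula $\Hess r=\cot r\,(g-dr\otimes dr)$ on $\Sn$.

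Since $R<\pi/2$, we have $\cot r>0$ on $(0,R)$. Together with $w'<0$, this makes the tangential eigenvalue strictly negative. This is exactly where the hypothesis $R<\pi/2$ enters.

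For the radial eigenvalue, the equation gives $w''=-1-(n-1)\cot r\,w'$. So $w''<0$ is equivalent to $f(r)>0$, where
\[
f(r):=\sin^n r-(n-1)\cos r\,I(r).
\]
We have $f(0)=0$, and differentiating gives
\[
f'(r)=n\sin^{n-1}r\cos r+(n-1)\sin r\,I(r)-(n-1)\cos r\sin^{n-1}r=\sin^{n-1}r\cos r+(n-1)\sin r\,I(r).
\]
This is positive on $(0,\pi/2)$, so $f>0$ there and $w''<0$.

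\emph{The center.} At $r=0$ the solution is smooth and $O(n)$-invariant. Its Hessian is therefore a multiple of $g$, and taking the trace gives $\Hess u_0(p)=-\frac1n g<0$. This proves $\Hess u_0<0$ in $\Omega_0$.

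\emph{The square root.} On $\Omega_0$ we have $u_0>0$, and
\[
\Hess\sqrt{u_0}=\frac{\Hess u_0}{2\sqrt{u_0}}-\frac{du_0\otimes du_0}{4u_0^{3/2}}.
\]
The first term is negative definite and the second is nonpositive, so $\Hess\sqrt{u_0}<0$. This is consistent with \eqref{eq:torsionChain}.

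The only step requiring care is the sign of the radial eigenvalue $w''$. It is handled by the monotonicity argument for $f$ above. The only other delicate point is the center $p$, where polar coordinates degenerate, and symmetry settles it.
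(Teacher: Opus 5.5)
Your proof is correct and follows the paper's argument: the radial reduction, $w'=-I(r)/\sin^{n-1}r<0$, the decomposition $\Hess u_0=w''\,dr\otimes dr+w'\cot r\,(g-dr\otimes dr)$, and the chain rule for $\sqrt{u_0}$. The differences are minor. For the radial eigenvalue, the paper uses $\cos r\,I(r)\le\int_0^r\sin^{n-1}s\cos s\,ds=\sin^n r/n$, which gives the quantitative bound $w''\le-1/n$, whereas you show strict negativity through the monotonicity of $f$. At the center, the paper uses the expansion $I(r)/\sin^{n-1}r=r/n+O(r^3)$, whereas you use $O(n)$-symmetry and the trace.
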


\begin{proof}
Uniqueness and rotational symmetry give $u_0=U(r)$, where $r=\dist(p,\cdot)$ and
\[
U''+(n-1)\cot r\,U'=-1,
\qquad U(R)=0,
\qquad U'(0)=0.
\]
Put
\[
J(r)=\int_0^r\sin^{n-1}s\,ds.
\]
Then
\[
U'(r)=-\frac{J(r)}{\sin^{n-1}r}<0.
\]
Since $\cos s\ge\cos r>0$ for $0\le s\le r<R$,
\[
\cos r\,J(r)
\le\int_0^r\sin^{n-1}s\cos s\,ds
=\frac{\sin^n r}{n}.
\]
Consequently
\[
U''=-1+(n-1)\frac{\cos r\,J(r)}{\sin^n r}
\le-\frac1n.
\]
For $r>0$,
\[
\Hess u_0
=U''dr\otimes dr+U'\cot r\,(g-dr\otimes dr)<0.
\]
At the center, $J(r)/\sin^{n-1}r=r/n+O(r^3)$ gives $\Hess u_0(p)=-g/n$.  Finally,
\[
\Hess\sqrt{u_0}
=\frac{\Hess u_0}{2\sqrt{u_0}}
-\frac{du_0\otimes du_0}{4u_0^{3/2}}<0.
\]
\end{proof}

\begin{proof}[Proof of Theorem~\ref{thm:torsion}]
Choose the family from Lemma~\ref{lem:domainPath} and define
\[
I=\{t\in[0,1]:\Hess(-\sqrt{u_t})>0\text{ throughout }\Omega_t\}.
\]
It is nonempty by Lemma~\ref{lem:torsionBall}.

It is relatively closed.  If $t_j\in I$ and $t_j\to t$, use Lemma~\ref{lem:torsionDomainDependence} on an arbitrary compact subset of $\Omega_t$.  There $u_t$ is bounded away from zero, so composition with $-\sqrt{\cdot}$ preserves the required $C^2$ convergence.  Hence
\[
\Hess(-\sqrt{u_t})\ge0
\qquad\text{throughout }\Omega_t.
\]
Lemma~\ref{lem:torsionStrict} upgrades this to strict positivity, so $t\in I$.

It is relatively open.  Otherwise there exist $t_j\to t_0\in I$, points $x_j\in\Omega_{t_j}$, and unit vectors $X_j\in T_{x_j}\Sn$ such that
\[
\Hess(-\sqrt{u_{t_j}})(X_j,X_j)\le0.
\]
Lemma~\ref{lem:torsionCollar} gives
\[
\dist(x_j,\partial\Omega_{t_j})\ge\delta.
\]
Pulling back by the local identifications at $t_0$, a subsequence converges to an interior point and a nonzero tangent vector.  Interior $C^2$ convergence then contradicts the strict positivity at $t_0$.  Thus $I$ is open.  Connectedness of $[0,1]$ gives $I=[0,1]$, and changing the sign proves \eqref{eq:torsionconc}.
\end{proof}

\begin{proof}[Proof of Corollary~\ref{cor:torsionConsequences}]
Put $w=\sqrt u$.  Since $u>0$ in $\Omega$ and vanishes on the boundary, it has an interior maximum.  If $x_0$ and $x_1$ were two distinct critical points, the minimizing geodesic between them lies in $\Omega$ by geodesic convexity.  Along its unit-speed parametrization $\gamma$,
\[
\frac{d^2}{ds^2}(w\circ\gamma)
=\Hess w(\dot\gamma,\dot\gamma)<0.
\]
The first derivative is strictly decreasing and therefore cannot vanish at both endpoints.  Thus the critical point is unique.  At that point $dw=0$ and
\[
\Hess u=2w\Hess w<0,
\]
so the maximum is nondegenerate.

For $0<c<\max_\Omega u$, the level set $\{u=c\}$ is regular and compactly contained in $\Omega$.  Concavity of $w$ along minimizing geodesics shows that $\{u>c\}$ is geodesically convex.  Its outward unit normal is $-\grad w/|\grad w|$, and for tangent $X\ne0$,
\[
\II_{\partial\{u>c\}}(X,X)
=-\frac{\Hess w(X,X)}{|\grad w|}>0.
\]

Finally, the collar estimate \eqref{eq:torsionCollarEstimate} gives a positive lower bound for $\Hess(-\sqrt u)$ in a fixed boundary collar.  On the complementary compact subset, Theorem~\ref{thm:torsion} and continuity give another positive lower bound.  Taking their minimum and reversing the sign yields \eqref{eq:torsionUniform}.
\end{proof}

The normalization to unit sectional curvature is inessential.  Let $\mathbb S^n_\kappa$ be the round sphere of sectional curvature $\kappa>0$, and let $u$ solve $-\Delta_{g_\kappa}u=\lambda$ with $\lambda>0$.  Under the homothety $D_\kappa(x)=\kappa^{-1/2}x$, the pulled-back function
\[
U=\frac\kappa\lambda(u\circ D_\kappa)
\]
solves $-\Delta_gU=1$, and
\[
D_\kappa^*\!\left(\nabla^2_{g_\kappa}\sqrt u\right)
=\sqrt{\frac\lambda\kappa}\,\Hess\sqrt U<0.
\]
Consequently, Theorem~\ref{thm:torsion} extends by rescaling to round spheres of arbitrary positive sectional curvature and to any positive constant forcing term. The preceding proof uses the curvature identities of the sphere and does not directly establish an analogous statement on a general Riemannian manifold.

\section{An extension to non-power nonlinearities}
\label{sec:nonpower}

\subsection{An intrinsic strictification criterion}
\label{sec:intrinsic}

Consider
\begin{equation}
\label{eq:generalf}
-\Lap u=f(u)\qquad\text{in }U\subset\Sn,
\end{equation}
where $u>0$.  We assume
\begin{equation}
\label{eq:H0}
 f\in C^3((0,\infty)),\quad f>0,
\qquad
\int_0^1\frac{ds}{f(s)}=\infty,
\qquad
\int_1^\infty\frac{ds}{f(s)}<\infty.
\end{equation}
Define
\begin{equation}
\label{eq:Hdef}
H(s):=\int_s^\infty\frac{d\tau}{f(\tau)},
\qquad
v:=H(u)^{1/3}.
\end{equation}
Consequently, $H:(0,\infty)\to(0,\infty)$ is strictly decreasing, $H(s)\to\infty$ as $s\downarrow0$, and $H(s)\to0$ as $s\to\infty$.  Since $H'(u)=-1/f(u)$ and $H(u)=v^3$,
\begin{equation}
\label{eq:vuprime}
\frac{dv}{du}=-\frac{1}{3f(u)v^2}<0.
\end{equation}
For the local theorem below, the divergence of $H$ at zero can be weakened to finiteness and positivity of $H$ on the range of the particular solution; the full assumption \eqref{eq:H0} is natural for the global Dirichlet theory.

Define, on the range of $v$,
\begin{equation}
\label{eq:Gammadef}
\Gamma(v):=3f'(u)H(u)-2.
\end{equation}
The structural conditions entering the local strictification theorem are
\begin{equation*}
\tag{H1}
\Gamma(v)\ge2(n-1)
\end{equation*}
and
\begin{equation*}
\tag{H2}
\frac{d}{dv}\bigl(v^2\Gamma'(v)\bigr)\le0,
\end{equation*}
equivalently $\Gamma''(v)+2\Gamma'(v)/v\le0$.

\begin{theorem}\label{thm:IntrinsicSchur}
Let $n\ge2$, let $U\subset\Sn$ be open, and let
$f\in C^3((0,\infty))$ be positive. Suppose that
$u\in C^4(U)$ solves $-\Lap u=f(u)$ and that $H$ is finite and
positive on the range of $u$. Let $v=H(u)^{1/3}$ and assume, throughout
the range of $v$, that
\begin{equation}\label{eq:introGamma}
\Gamma(v)\ge2(n-1),\qquad
\frac{d}{dv}\bigl(v^2\Gamma'(v)\bigr)\le0.
\end{equation}
If $\Hess v\ge0$ in $U$, then
\[
\Hess v>0\quad\text{in }U.
\]
\end{theorem}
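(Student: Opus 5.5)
The plan is to reduce Theorem~\ref{thm:IntrinsicSchur} to Lemma~\ref{lem:SchurMaster}, exactly as Lemma~\ref{lem:Schur} did for the pure power. The first step is to write $\Lap v=\Phi(v,q)$ explicitly. Using $v'(u)=-1/(3f v^2)$ from \eqref{eq:vuprime} and $\Lap u=-f$, the first-order part is $v'(u)\Lap u=1/(3v^2)$. For the second-order part, differentiating $v'(u)$ once more gives
\[
v''(u)=\frac{f'}{3f^2v^2}-\frac{2}{9f^2v^5}.
\]
Combined with $|\grad u|^2=9f^2v^4q$, this yields $v''(u)|\grad u|^2=(3f'H-2)\,q/v$. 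Hence
\[
F=\Lap v=\frac{\Gamma(v)\,q}{v}+\frac{1}{3v^2}>0,
\qquad \Phi(v,q)=\frac{\Gamma(v)q}{v}+\frac1{3v^2}.
\]
This is the structure of \eqref{eq:vPDEgeneral} with $c$ replaced by $\Gamma(v)$, $d=1/3$ and $k=-2$. Positivity of $F$ uses $\Gamma\ge 2(n-1)>0$. The function $\Phi$ is $C^2$ because $f\in C^3$, so $\Gamma\in C^2$.

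Next, I argue by contradiction. Suppose $\Hess v\ge0$ has a null direction at $x_0$, with rank $r\in\{1,\dots,n-1\}$; rank $0$ is impossible since $F>0$. Lemma~\ref{lem:SchurMaster} then gives \eqref{eq:MasterSchur}, and I compute the derivatives of $\Phi$ at $(v(x_0),q(x_0))$. The derivative in $q$ is $\Phi_q=\Gamma/v$, so the zeroth-order term is $2\Phi_qq-2F=-2/(3v^2)<0$. For the $v$-derivative I write
\[
\Phi_v=-\frac Fv+T-r,\qquad T:=r+\frac{\Gamma'q}{v}-\frac1{3v^3}.
\]
For the second $v$-derivative I write
\[
\Phi_{vv}=\frac{2F}{v^2}+\frac{\Gamma''q}{v}-\frac{2\Gamma'q}{v^2}+\frac{4}{3v^4}.
\]
Expanding $-\frac2F(T-F/v)^2$ cancels the $2F/v^2$ terms, as in Proposition~\ref{prop:generalPowerSchur}. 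The $4/(3v^4)$ contributions cancel against the $-4/(3v^4)$ coming from $4T/v$. The $\Gamma'$ terms combine into $(q/v)(\Gamma''+2\Gamma'/v)$. I expect to obtain
\[
\Delta\phi(x_0)\le\Bigl[\frac{4r-2\Gamma}{v}+\frac{q}{v}\Bigl(\Gamma''+\frac{2\Gamma'}{v}\Bigr)-\frac{2T^2}{F}\Bigr]v_1^2-\frac{2}{3v^2}.
\]

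Finally, I conclude from the hypotheses. Condition (H1) gives $\Gamma\ge2(n-1)\ge2r$, so the first term is nonpositive. Condition (H2), in the equivalent form $\Gamma''+2\Gamma'/v=v^{-2}\frac{d}{dv}(v^2\Gamma')\le0$, makes the second term nonpositive since $q,v>0$. The third term is trivially nonpositive. Hence $\Delta\phi(x_0)<0$, contradicting \eqref{eq:masterMin}. The main obstacle is purely the bookkeeping in the expansion of $\Phi_{vv}-2\Phi_q-\frac2F(r+\Phi_v)^2$. In particular, one must verify that the $v^{-4}$ terms cancel exactly and that the $\Gamma'$ terms assemble into the (H2) combination rather than leaving a stray $\Gamma'^2q^2$ term. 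That term is in fact harmless, since it sits inside $-2T^2/F$.
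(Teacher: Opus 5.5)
Your proposal is correct and matches the paper's proof: you use the same representation $\Lap v=\Gamma(v)q/v+1/(3v^2)$, apply Lemma~\ref{lem:SchurMaster} with your $T$ in the role of the paper's $R$, and reach the same bracket estimate, concluded via \textup{(H1)} and \textup{(H2)}. Your explicit exclusion of rank $0$ (via $F>0$) and your check that $\Phi\in C^2$ are small details the paper leaves implicit.
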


\begin{proof}[Proof of Theorem~\ref{thm:IntrinsicSchur}]
From \eqref{eq:vuprime},
\[
-\frac{dv}{du}f(u)=\frac{1}{3v^2}.
\]
Moreover,
\[
\frac{v''(u)}{(v'(u))^2}
=3f'(u)v^2-\frac2v
=\frac{3f'(u)H(u)-2}{v}
=\frac{\Gamma(v)}v.
\]
Thus
\begin{equation}
\label{eq:IntrinsicPDE}
F:=\Lap v
=\frac{\Gamma(v)}v q+\frac{1}{3v^2},
\qquad q:=|\grad v|^2.
\end{equation}
In particular $F>0$ under \textup{(H1)}.

Suppose that $\Hess v$ has a null direction at $x_0$, of rank $r$.  Apply Lemma~\ref{lem:SchurMaster} with
\[
\Phi(v,q)=\frac{\Gamma(v)}v q+\frac{1}{3v^2}.
\]
Writing
\[
R:=r+\frac{\Gamma'(v)}v q-\frac{1}{3v^3},
\]
one checks from \eqref{eq:IntrinsicPDE} that
\[
r+\Phi_v=R-\frac{F}{v}.
\]
Expanding the square in \eqref{eq:MasterSchur} and using \eqref{eq:IntrinsicPDE} yields
\begin{equation}
\label{eq:IntrinsicSchurFormula}
\begin{aligned}
\Delta\phi(x_0)
\le{}&
\Bigg[
\frac{4r-2\Gamma(v)}v
+q\left(
\frac{\Gamma''(v)}v
+\frac{2\Gamma'(v)}{v^2}
\right)\\
&\qquad
-\frac2F
\left(
 r+\frac{\Gamma'(v)}v q-\frac{1}{3v^3}
\right)^2
\Bigg]v_1^2
-\frac{2}{3v^2}.
\end{aligned}
\end{equation}
Since $r\le n-1$, condition \textup{(H1)} makes the first term in the bracket nonpositive.  Condition \textup{(H2)} makes the second nonpositive, the square term is nonpositive, and the final term is strictly negative.  Therefore $\Delta\phi(x_0)<0$, contradicting \eqref{eq:masterMin}.
\end{proof}

\begin{corollary}
\label{cor:IntrinsicFConditions}
Condition \textup{(H1)} is equivalent to
\begin{equation}
\label{eq:fIntrinsic1}
f'(u)H(u)\ge\frac{2n}{3}.
\end{equation}
Furthermore,
\begin{equation}
\label{eq:GammaPrimeF}
\Gamma'(v)=9v^2\left[f'(u)-f(u)f''(u)H(u)\right],
\end{equation}
and \textup{(H2)} is equivalent to
\begin{equation}
\label{eq:fIntrinsic2}
\frac{d}{du}
\left[
H(u)^{4/3}
\bigl(f'(u)-f(u)f''(u)H(u)\bigr)
\right]\ge0.
\end{equation}
\end{corollary}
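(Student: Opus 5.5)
The plan is a direct chain-rule computation based on the relation $H(u)=v^3$ and the derivative \eqref{eq:vuprime}. Since $dv/du<0$ on the range of $u$, conditions expressed in $v$ can be rewritten in $u$, with differential inequalities reversing sign.

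\emph{Step 1: condition (H1).} By the definition \eqref{eq:Gammadef}, $\Gamma(v)=3f'(u)H(u)-2$. Hence $\Gamma(v)\ge2(n-1)$ is equivalent to $3f'(u)H(u)\ge2n$, which is \eqref{eq:fIntrinsic1}. This needs no further argument.

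\emph{Step 2: the formula \eqref{eq:GammaPrimeF}.} First differentiate $\Gamma$ in $u$, using $H'(u)=-1/f(u)$:
\[
\frac{d\Gamma}{du}=3f''(u)H(u)-\frac{3f'(u)}{f(u)}.
\]
Next invert \eqref{eq:vuprime} to obtain $du/dv=-3f(u)v^2$. Multiplying the two gives
\[
\Gamma'(v)=-3f(u)v^2\Bigl(3f''(u)H(u)-\frac{3f'(u)}{f(u)}\Bigr)=9v^2\bigl(f'(u)-f(u)f''(u)H(u)\bigr).
\]
The hypothesis $f\in C^3$ makes $\Gamma$ a $C^2$ function of $v$, so (H2) is meaningful.

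\emph{Step 3: condition (H2).} By Step 2,
\[
v^2\Gamma'(v)=9v^4\bigl(f'-ff''H\bigr)=9H(u)^{4/3}\bigl(f'(u)-f(u)f''(u)H(u)\bigr).
\]
By the chain rule, $\frac{d}{dv}=\frac{du}{dv}\frac{d}{du}$, and $du/dv<0$. Therefore $\frac{d}{dv}(v^2\Gamma')\le0$ holds exactly when $\frac{d}{du}\bigl[H^{4/3}(f'-ff''H)\bigr]\ge0$, which is \eqref{eq:fIntrinsic2}. The positive factor $9$ is irrelevant to the sign.

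The only point needing care is this sign reversal when passing from $v$-derivatives to $u$-derivatives, together with the bookkeeping of $v^4=H^{4/3}$. Otherwise the argument is routine.
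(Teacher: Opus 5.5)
Your proposal is correct and follows the paper's own proof essentially step for step. You read (H1) directly off the definition of $\Gamma$, and you obtain $\Gamma'(v)$ by differentiating $3f'H-2$ in $u$ using $H'=-1/f$ and $du/dv=-3fv^2$. For (H2) you rewrite $v^2\Gamma'=9H^{4/3}(f'-ff''H)$ and use $du/dv<0$ to reverse the sign of the derivative condition.
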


\begin{proof}
The first assertion follows from \eqref{eq:Gammadef}.  Differentiating $\Gamma=3f'H-2$ with respect to $u$ and using
\[
H'=-\frac1f,
\qquad
\frac{du}{dv}=-3fv^2
\]
gives \eqref{eq:GammaPrimeF}.  Finally,
\[
\frac{d}{dv}(v^2\Gamma')\le0
\quad\Longleftrightarrow\quad
\Gamma''+\frac2v\Gamma'\le0,
\]
and
\[
v^2\Gamma'=9v^4(f'-ff''H)=9H^{4/3}(f'-ff''H).
\]
Since $du/dv<0$, the sign condition in $v$ is precisely \eqref{eq:fIntrinsic2}.
\end{proof}

\begin{remark}
Conditions \textup{(H1)}--\textup{(H2)} are the structural assumptions used in the proof. The first controls the curvature term for all possible ranks $r\le n-1$ of the transformed Hessian. The second can also be written as the differential inequality
\[
\Gamma''(v)+\frac2v\Gamma'(v)\le0.
\]
The criterion is unchanged under multiplication of the reaction by a positive constant, after the corresponding rescaling of the transformed variable.
\end{remark}

\begin{remark}
For $f(u)=u^p$,
\[
H(u)=\frac{u^{1-p}}{p-1},
\qquad
f'(u)H(u)=\frac{p}{p-1},
\]
and
\[
f'(u)-f(u)f''(u)H(u)\equiv0.
\]
Thus \eqref{eq:fIntrinsic2} holds with equality, and \eqref{eq:fIntrinsic1} becomes
\[
\frac{p}{p-1}\ge\frac{2n}{3}
\quad\Longleftrightarrow\quad
p\le\frac{2n}{2n-3}.
\]
Moreover $H(u)^{1/3}$ differs from $u^{-(p-1)/3}$ only by the positive constant $(p-1)^{-1/3}$.
\end{remark}

\subsection{Application to an inhibited power law}
\label{sec:inhibited}

We first treat a nonhomogeneous reaction which is specified directly,
rather than obtained by inverting a prescribed intrinsic coefficient.
For $p>1$, $\lambda\ge0$, and $0<\delta<p-1$, define
\begin{equation}\label{eq:inhibitedDef}
f_{p,\lambda,\delta}(u):=\frac{u^p}{1+\lambda u^\delta},
\qquad u>0.
\end{equation}
Its intrinsic tail has the explicit form
\begin{equation}\label{eq:inhibitedH}
H(u)=\int_u^\infty \frac{ds}{f_{p,\lambda,\delta}(s)}
=\frac{u^{1-p}}{p-1}+
\frac{\lambda u^{1-p+\delta}}{p-1-\delta}.
\end{equation}

\begin{proposition}
\label{prop:inhibitedStructure}
For $n\ge2$ and
\begin{equation}\label{eq:inhibitedStructureRange}
1<p\le\frac{2n}{2n-3},\qquad
\lambda\ge0,\qquad 0<\delta\le\frac{p-1}{3},
\end{equation}
let $v=H(u)^{1/3}$ and $\Gamma(v)=3f'(u)H(u)-2$.
Then, for every $u>0$,
\[
\Gamma(v)\ge2(n-1),\qquad
\frac{d}{dv}\bigl(v^2\Gamma'(v)\bigr)\le0.
\]
\end{proposition}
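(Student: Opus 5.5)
We verify the two conditions through their reformulations in Corollary~\ref{cor:IntrinsicFConditions}, namely \eqref{eq:fIntrinsic1} and \eqref{eq:fIntrinsic2}. The key observation is a scaling structure. Set $w:=\lambda u^\delta\ge0$. Then
\[
f=\frac{u^p}{1+w},\qquad f'=u^{p-1}\frac{p+(p-\delta)w}{(1+w)^2},\qquad
H=u^{1-p}\Bigl(\frac1{p-1}+\frac{w}{p-1-\delta}\Bigr).
\]
Since $u\,\partial_u w=\delta w$, every quantity in the criterion factors as a power of $u$ times a rational function of $w$ alone. The statement therefore reduces to polynomial inequalities in $w\in[0,\infty)$. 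The case $\lambda=0$ is the pure power already treated in the preceding remark, so we may assume $w>0$.

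\emph{Step 1: condition (H1).} We have
\[
f'H=\frac{\bigl(p+(p-\delta)w\bigr)\bigl((p-1-\delta)+(p-1)w\bigr)}{(p-1)(p-1-\delta)(1+w)^2}.
\]
We compare this with $p/(p-1)$, that is, the numerator with $p(p-1-\delta)(1+w)^2$, coefficient by coefficient:
\begin{itemize}[leftmargin=2em]
\item the constant terms agree;
\item the $w$-coefficients differ by $\delta(1+\delta)>0$;
\item the $w^2$-coefficients differ by $\delta>0$.
\end{itemize}
Hence $f'H\ge p/(p-1)$. Since $p\le 2n/(2n-3)$ gives $p/(p-1)\ge 2n/3$, condition \eqref{eq:fIntrinsic1} holds, and (H1) follows.

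\emph{Step 2: reduction of (H2).} Both $f'$ and $ff''H$ scale like $u^{p-1}$, while $H^{4/3}$ scales like $u^{-4(p-1)/3}$. We can therefore write
\[
H^{4/3}\bigl(f'-ff''H\bigr)=u^{-(p-1)/3}\,Q(w)
\]
for an explicit function $Q$. Here $Q$ is a rational function multiplied by the factor $\bigl(\frac1{p-1}+\frac w{p-1-\delta}\bigr)^{4/3}$. Differentiating with $u\,\partial_u=\delta w\,\partial_w$, condition \eqref{eq:fIntrinsic2} becomes
\[
\delta\,wQ'(w)-\frac{p-1}{3}Q(w)\ge0\qquad(w\ge0).
\]

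\emph{Step 3: sign analysis.} We expect $Q(0)=0$, since the pure power satisfies (H2) with equality. We also expect $Q\le0$, because the inhibition lowers the convexity of $f$ relative to a power law. It is convenient to factor $Q=K(w)\,\widetilde Q(w)$, where $K=\bigl(\frac1{p-1}+\frac w{p-1-\delta}\bigr)^{4/3}$ contributes
\[
\frac{wK'}{K}=\frac{4}{3}\,\frac{w/(p-1-\delta)}{1/(p-1)+w/(p-1-\delta)}.
\]
The inequality from Step 2 then becomes an inequality for a single rational function. Clearing the denominator $(1+w)^3\bigl((p-1-\delta)+(p-1)w\bigr)$ leaves a polynomial inequality $P(w)\ge0$. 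We check it by expanding $P$ and showing every coefficient is nonnegative, using $0<\delta\le(p-1)/3$ and $p>1$. The hypothesis $\delta\le(p-1)/3$ should enter exactly in order to dominate the term $-\frac{p-1}{3}Q$ by $\delta wQ'$ where $Q<0$. Intuitively, $Q$ behaves like $-c\,w$ near $w=0$, and the leading coefficient there is $(\delta-\frac{p-1}{3})(-c)\ge0$.

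This coefficient expansion is the main obstacle. If some middle coefficient of $P$ is not manifestly nonnegative, we will first try the substitution $\delta=\theta(p-1)$ with $\theta\in(0,\frac13]$. Each coefficient then becomes a polynomial in $(p-1,\theta)$. We will check monotonicity in $\theta$, or group terms as $\bigl(\frac{p-1}{3}-\delta\bigr)\cdot(\text{positive})+\delta\cdot(\text{positive})$. Once (H1) and (H2) are established, the proposition follows from Corollary~\ref{cor:IntrinsicFConditions}.
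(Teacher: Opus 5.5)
Your Steps 1 and 2 are correct, and Step 1 is slightly sharper than the paper's argument. The paper shows that $f'H$ is nondecreasing in $z=\lambda u^\delta$, which uses $\delta\le1$. Your coefficient comparison instead amounts to the exact identity
\[
f'H-\frac{p}{p-1}=\frac{\delta w\,(1+\delta+w)}{(p-1)(p-1-\delta)(1+w)^2},
\]
valid for every $0<\delta<p-1$.

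The gap is Step 3, which carries the entire content of (H2). You never compute $Q$, its sign is only ``expected'', and the coefficient check is announced with fallback strategies rather than performed. Moreover, the hypotheses you intend to use there, $0<\delta\le(p-1)/3$ and $p>1$, are not sufficient. Differentiating the identity above and using $f'-ff''H=-f\,\frac{d}{du}(f'H)$ (a consequence of $H'=-1/f$) gives
\[
f'-ff''H=-\frac{\delta^2u^{p-1}\,w\,[(1+\delta)+(1-\delta)w]}{(p-1)(p-1-\delta)(1+w)^4}.
\]
Hence $Q\le0$ for all $w\ge0$ exactly when $\delta\le1$. That bound comes from $p-1\le3$, i.e.\ from the assumption $p\le 2n/(2n-3)$, which your Step 3 never invokes. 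It cannot be dropped: if $1<\delta\le(p-1)/3$ (possible once $p>4$), then $u^{-(p-1)/3}Q(w)$ is positive for large $u$ and decays like $u^{-(p-1+2\delta)/3}$. So it is not nondecreasing, and (H2) fails.

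To finish, set $A=p-1$. With $\delta\le1$ you may divide by $Q<0$, and your condition $\delta wQ'-\frac{A}{3}Q\ge0$ becomes
\[
1+\frac{(1-\delta)w}{(1+\delta)+(1-\delta)w}+\frac{4Aw}{3(A-\delta+Aw)}-\frac{4w}{1+w}\le\frac{A}{3\delta}.
\]
The paper closes this with two comparisons:
\begin{itemize}
\item the second term is at most $w/(1+w)$ since $\delta\ge0$;
\item the third term is at most $2w/(1+w)$ since $A-\delta\ge 2A/3$.
\end{itemize}
So the left side is at most $1-w/(1+w)\le1\le A/(3\delta)$.

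Your polynomial route also works if the denominators are cleared correctly; the $(1+w)^3$ you anticipated is not what arises. Multiply the right side minus the left side by $3\delta(1+w)[(1+\delta)+(1-\delta)w](A-\delta+Aw)$. This gives a cubic $\sum_k c_kw^k$ with
\begin{gather*}
c_0=(A-3\delta)(A-\delta)(1+\delta),\qquad c_3=A(1-\delta)(A+2\delta),\\
c_1=3A^2-6A\delta+A^2\delta-3\delta^2+8A\delta^2-15\delta^3,\qquad
c_2=3A^2-A^2\delta+10A\delta^2-6\delta^2+6\delta^3.
\end{gather*}
All four coefficients are nonnegative for $0<\delta\le\min\{1,A/3\}$:
\begin{itemize}
\item for $c_1$, apply $3\delta\le A$ to the negative terms;
\item for $c_2$, use also $\delta\le1$;
\item the constant term $c_0$ is where $\delta\le A/3$ enters, and the leading term $c_3$ is where $\delta\le1$ enters.
\end{itemize}
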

\begin{proof}
Put $A=p-1$, $B=A-\delta$, and $z=\lambda u^\delta$.
In particular, $A>0$ and $B\ge2A/3>0$. A direct calculation gives
\begin{equation}\label{eq:inhibitedQ}
Q(z):=f'(u)H(u)
=\frac{[p+(p-\delta)z](A^{-1}+zB^{-1})}{(1+z)^2},
\qquad
Q'(z)=\frac{\delta[(1+\delta)+(1-\delta)z]}{AB(1+z)^3}.
\end{equation}
Since $A\le3$ and $\delta\le A/3$, we have $\delta\le1$;
hence $Q'(z)\ge0$. Consequently
\[
f'(u)H(u)=Q(z)\ge Q(0)=\frac p{p-1}\ge\frac{2n}{3},
\]
which is equivalent to the first structural inequality by
Corollary~\ref{cor:IntrinsicFConditions}.

We next verify the second inequality using \eqref{eq:fIntrinsic2}.
As $Q=f'H$ and $H'=-1/f$,
\[
f'-ff''H=-f\frac{dQ}{du}
=-\frac{\delta^2u^A z[(1+\delta)+(1-\delta)z]}
{AB(1+z)^4}.
\]
For $\lambda=0$ this expression vanishes and the required
condition follows. If $\lambda>0$, multiplication by $H^{4/3}$
shows that for some positive constant $C$ depending only on the
parameters,
\begin{equation}\label{eq:inhibitedWidentity}
H^{4/3}(f'-ff''H)=-C W(z),
\quad
W(z)=z^{1-A/(3\delta)}
\frac{[(1+\delta)+(1-\delta)z](B+Az)^{4/3}}{(1+z)^4}.
\end{equation}
Logarithmic differentiation gives
\begin{align*}
\frac{zW'(z)}{W(z)}={}&1-\frac{A}{3\delta}
+\frac{(1-\delta)z}{1+\delta+(1-\delta)z}
+\frac{4Az}{3(B+Az)}-\frac{4z}{1+z}.
\end{align*}
The first term is nonpositive. Using $0<\delta\le1$ and
$B\ge2A/3$, we have
\[
\frac{(1-\delta)z}{1+\delta+(1-\delta)z}
\le\frac{z}{1+z},\qquad
\frac{4Az}{3(B+Az)}\le\frac{2z}{1+z}.
\]
Therefore
\[
\frac{zW'(z)}{W(z)}\le-\frac{z}{1+z}<0
\qquad(z>0).
\]
Since $z=\lambda u^\delta$ increases with $u$, the right-hand
side of \eqref{eq:inhibitedWidentity} increases with $u$.
Thus \eqref{eq:fIntrinsic2} holds, completing the proof.
\end{proof}

The preceding proposition is a \emph{local strictification result}:
it does not, by itself, produce a solution with a semidefinite
transformed Hessian. The next elementary computation supplies the
extra information needed to extend the global compactness proof
beyond the half-space supersolution exponent.

\begin{lemma}
\label{lem:inhibitedConvexReaction}
Assume $1<p\le p_n^*=2n/(2n-3)$ and
$0<\delta\le(p-1)/3$. For any $\lambda\ge0$, the function
$f(s)=s^p/(1+\lambda s^\delta)$ is strictly convex on $(0,\infty)$,
nonnegative, and belongs to
$C^1([0,\infty))\cap C^2((0,\infty))$ with $f(0)=0$.
The same assertions hold for every normalized multiple
$h(s)=C s^p/(1+\lambda s^\delta)$ with $C>0$,
and for the limiting power $h(s)=s^{p-\delta}$.
\end{lemma}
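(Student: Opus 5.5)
The plan is a direct computation of $f'$ and $f''$ in the variable $z:=\lambda s^\delta$, as in the proof of Proposition~\ref{prop:inhibitedStructure}, followed by a sign check of a quadratic polynomial in $z$. Write $f(s)=s^p(1+z)^{-1}$. Using $ds\,\tfrac{dz}{ds}=\delta z$, we get
\[
f'(s)=s^{p-1}g(z),\qquad g(z)=\frac{p+(p-\delta)z}{(1+z)^2},
\qquad
g'(z)=-\frac{(p+\delta)+(p-\delta)z}{(1+z)^3}.
\]
Hence
\[
f''(s)=s^{p-2}\bigl[(p-1)g(z)+\delta z g'(z)\bigr]=\frac{s^{p-2}}{(1+z)^3}N(z).
\]
Expanding the numerator gives
\[
N(z)=p(p-1)+\bigl[2p(p-1)-\delta(2p-1)-\delta^2\bigr]z+(p-\delta)(p-1-\delta)z^2 .
\]

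Next I would show that every coefficient of $N$ is positive. The constant term $p(p-1)$ is positive because $p>1$. The quadratic coefficient is positive because $\delta\le(p-1)/3<p-1<p$. For the linear coefficient, use $\delta\le(p-1)/3$ together with $\delta\le1$; the latter follows from $p-1\le 3/(2n-3)\le3$. These give
\[
\delta(2p-1)+\delta^2\le\delta(2p-1+1)\le\tfrac{2p(p-1)}{3}<2p(p-1),
\]
so the linear coefficient is positive as well. Thus $N(z)>0$ for all $z\ge0$, and $f''>0$ on $(0,\infty)$, which is strict convexity. When $\lambda=0$ we have $z\equiv0$, and the same formula reduces to $p(p-1)s^{p-2}>0$.

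The remaining assertions are elementary. Nonnegativity is clear. Smoothness on $(0,\infty)$ holds because $f$ is a composition of smooth functions there. At the origin, $f(s)\le s^p$ gives $f(0)=0$ and $f(s)/s\to0$, so $f'(0)=0$. Moreover $0\le f'(s)\le p\,s^{p-1}\to0$, since $0<g(z)\le p$ for $z\ge0$; hence $f\in C^1([0,\infty))$. Multiplying by a constant $C>0$ changes none of these properties. The limiting power $s^{p-\delta}$ has exponent $p-\delta\ge1+2(p-1)/3>1$, so it is strictly convex, lies in $C^1([0,\infty))\cap C^2((0,\infty))$, and vanishes at $0$.

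The only step needing care is the sign of the linear coefficient of $N$. This is exactly where the hypotheses $\delta\le(p-1)/3$ and $p\le p_n^*$ (through $\delta\le1$) enter; everything else is routine.
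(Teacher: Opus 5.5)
Your proof is correct and is essentially the paper's argument. Both verify $f''>0$ by direct computation. The paper organizes it through the elasticity $E=sf'/f=p-\delta w$ with $w=z/(1+z)$ and the identity $s^2f''/f=E(E-1)-\delta^2w(1-w)\ge 7(p-1)/12$. You instead expand $(1+z)^3s^{2-p}f''$ as a quadratic in $z$ with positive coefficients. Your explicit bound $0\le f'(s)\le p\,s^{p-1}$ also spells out the $C^1([0,\infty))$ claim, which the paper only asserts.

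A minor correction to your closing remark: the restriction $\delta\le1$, and hence $p\le p_n^*$, is not needed for the linear coefficient. The hypothesis $\delta\le(p-1)/3$ alone gives $\delta(2p-1)+\delta^2\le(p-1)(7p-4)/9<2p(p-1)$.
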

\begin{proof}
Put $A=p-1\in(0,3]$, $z=\lambda s^\delta$, and
$w=z/(1+z)\in[0,1)$. Let
\[
E(s):=\frac{s f'(s)}{f(s)}=p-\delta w.
\]
Since $sE'(s)=-\delta^2w(1-w)$, differentiation gives
\begin{equation}\label{eq:inhibitedConvexCalculation}
\frac{s^2f''(s)}{f(s)}
=E(s)(E(s)-1)-\delta^2w(1-w).
\end{equation}
Because $\delta\le A/3$, one has
$E\ge1+2A/3$. Also $w(1-w)\le1/4$, so
\[
\frac{s^2f''(s)}{f(s)}
\ge\frac{2A}{3}-\frac{A^2}{36}
\ge\frac{7A}{12}>0,
\]
where the last inequality uses $A\le3$.
The differentiability assertions follow from $p-\delta>1$.
Multiplication by a positive constant preserves convexity,
and $p-\delta>1$ makes the limiting power strictly convex.
\end{proof}

The global continuation argument uses the convex-reaction
half-space Liouville theorem of Dupaigne--Sirakov--Souplet
\cite[Theorem~1.3]{DupaigneSirakovSouplet2022}; the
hypotheses on the limiting reaction are verified below.

\begin{proof}[Proof of Theorem~\ref{thm:inhibited}]
Let
\[
P:=p_n^*=\frac{2n}{2n-3},
\qquad \theta:=\frac{\delta}{p-1}\in(0,1/3].
\]
Choose $p_*\in(1,p)$ sufficiently close to $1$ that the
near-linear uniqueness, nondegeneracy, and strict power-convexity
result of Subsection~\ref{sec:near1} holds at $p_*$. For
$t\in[0,1]$ set
\begin{equation}\label{eq:inhibitedHomotopy}
p_t=(1-t)p_*+tp,\quad
\delta_t=\theta(p_t-1),\quad
f_t(u)=\frac{u^{p_t}}{1+t\lambda u^{\delta_t}}.
\end{equation}
Then $f_0(u)=u^{p_*}$ and $f_1(u)=f_{p,\lambda,\delta}(u)$.
Proposition~\ref{prop:inhibitedStructure} applies at each $t$,
including $t=0$.

\emph{Uniform compactness.}
The effective exponent of $f_t$ is
\begin{equation}\label{eq:inhibitedElasticity}
P_t(u):=\frac{uf_t'(u)}{f_t(u)}
=p_t-\frac{\delta_t\,t\lambda u^{\delta_t}}
{1+t\lambda u^{\delta_t}}.
\end{equation}
With $\alpha:=1+(1-\theta)(p_*-1)>1$, it follows that
\begin{equation}\label{eq:inhibitedElasticityBounds}
1<\alpha\le P_t(u)\le p\le P=p_n^*
\quad(u>0,\ t\in[0,1]).
\end{equation}
For $M\ge1$, we also have
\[
\frac{f_t(M)}M\ge
\frac{M^{(1-\theta)(p_*-1)}}{1+\lambda}\longrightarrow\infty
\quad\text{uniformly in }t,
\]
whereas for $0<u\le1$,
$f_t(u)/u\le u^{p_*-1}\to0$ uniformly as $u\downarrow0$.
Suppose positive Dirichlet solutions $u_j$ along this homotopy
have maxima $M_j\to\infty$. Put
\[
\rho_j^2=\frac{M_j}{f_{t_j}(M_j)},\qquad
h_j(s)=\frac{f_{t_j}(M_js)}{f_{t_j}(M_j)}
\quad(0\le s\le1).
\]
Then $\rho_j\to0$, and integration of \eqref{eq:inhibitedElasticityBounds}
gives
\begin{equation}\label{eq:inhibitedNormalized}
s^p\le h_j(s)\le s,\qquad
0\le h_j'(s)\le p\quad(0<s\le1).
\end{equation}
We identify the nonlinear limit before applying the
convex-reaction half-space Liouville theorem. Let
\[
  z_j:=t_j\lambda M_j^{\delta_{t_j}}\ge0.
\]
The normalized reaction has the \emph{exact} expression
\begin{equation}\label{eq:inhibitedNormalizedExact}
 h_j(s)=s^{p_{t_j}}\frac{1+z_j}{1+z_js^{\delta_{t_j}}}
 \qquad(0\le s\le1).
\end{equation}
After taking a subsequence, assume that $t_j\to t_\infty$
and $z_j\to z_\infty\in[0,\infty]$ (allowing $z_\infty=\infty$).
Write $p_\infty=p_{t_\infty}$ and
$\delta_\infty=\theta(p_\infty-1)$. For every $s>0$,
\begin{equation}\label{eq:inhibitedLimitTwoCases}
 h_j(s)\longrightarrow h(s):=
 \begin{cases}
 \displaystyle s^{p_\infty}\frac{1+z_\infty}
                   {1+z_\infty s^{\delta_\infty}},
                   &z_\infty<\infty,\\[2mm]
 s^{p_\infty-\delta_\infty},&z_\infty=\infty.
 \end{cases}
\end{equation}
Set $h(0)=0$. The uniform Lipschitz estimate in
\eqref{eq:inhibitedNormalized} and pointwise convergence imply
$h_j\to h$ uniformly on $[0,1]$. In either case the formula in
\eqref{eq:inhibitedLimitTwoCases} defines $h$ on all of
$[0,\infty)$; Lemma~\ref{lem:inhibitedConvexReaction} shows that
\[
 h\in C^1([0,\infty))\cap C^2((0,\infty)),\quad
 h(0)=0,\quad h\ge0,\quad h\text{ is strictly convex},\quad
 h\not\equiv0.
\]
In particular $h(s)\ge s^p$ for $0\le s\le1$.

Let $x_j$ maximize $u_j$ and put $d_j=\dist(x_j,\partial\Omega)$.
For $w_j=u_j/M_j$, the original-domain equation is
$-\Lap w_j=\rho_j^{-2}h_j(w_j)$, and $0\le w_j\le1$, $w_j(x_j)=1$.
By \eqref{eq:inhibitedNormalized} the $h_j$ are equi-Lipschitz
on $[0,1]$; hence the hypotheses of
Lemma~\ref{lem:quantBoundaryBlowup} hold with a fixed $L$.
After extraction, either $d_j/\rho_j\to\infty$, or
$d_j/\rho_j\to d_\infty<\infty$.
In the first case, rescaling in exponential coordinates centered
at $x_j$ and using interior $W^{2,q}$ and Schauder estimates yields
$w_j\to W$ locally in $C^2(\mathbb R^n)$, with $W(0)=1$.
In the second case the boundary lemma, applied in normal coordinates,
gives $d_\infty\ge c(\Omega,n,D,L)>0$, where
$d_j/\rho_j\le D<\infty$ on the chosen subsequence, and a
$C^2_{\mathrm{loc}}$ limit up to the flat boundary with
$W(d_\infty e_n)=1$ and $W=0$ on $\partial\mathbb R^n_+$.
The uniform convergence $h_j\to h$ shows in either case that
\begin{equation}\label{eq:inhibitedBlowupLimit}
 -\Delta W=h(W),\qquad 0\le W\le1.
\end{equation}
Since $h\ge0$ and the maximum is attained at an interior point,
the strong maximum principle yields $W>0$ throughout the relevant
open domain. In the half-space case $W$ also has its continuous zero
boundary trace and is bounded on every finite strip.

\emph{Whole-space alternative.}
If $n=2$, a nonnegative superharmonic function on $\mathbb R^2$
is constant. Equation \eqref{eq:inhibitedBlowupLimit} and
$h(s)>0$ for $s>0$ then force $W=0$, a contradiction.
If $n\ge3$, the lower bound $h(W)\ge W^p$ gives
$-\Delta W\ge W^p$ in $\mathbb R^n$.
The inequality
\[
 p\le\frac{2n}{2n-3}<\frac{n}{n-2}
\]
exclude such a positive entire supersolution: by
Armstrong--Sirakov \cite[Theorem~4.1]{ArmstrongSirakov2011},
with $-\Delta$ as the operator and the pure-power function
$f(s)=s^p$, no positive supersolution of
$-\Delta W\ge W^p$ exists on an exterior domain when
$1<p\le n/(n-2)$. We apply this result after restricting
$W$ to the exterior of a ball.

\emph{Half-space alternative.}
The limit is a nontrivial nonnegative classical Dirichlet
solution of \eqref{eq:inhibitedBlowupLimit}, bounded by one on
the entire half-space and thus on every finite-height strip.
The general convex-reaction Liouville theorem of
Dupaigne--Sirakov--Souplet \cite[Theorem~1.3]{DupaigneSirakovSouplet2022},
applies because $h\in C^1([0,\infty))\cap C^2((0,\infty))$,
$h(0)=0$, $h\ge0$, $h\not\equiv0$, and $h$ is convex.
The limit $W$ is nonnegative, nonzero, continuous up to the flat
boundary with zero Dirichlet trace, and bounded on finite-height
strips. The cited theorem therefore rules out this alternative in \emph{every}
dimension. In particular, no exponent restriction
$p\le(n+1)/(n-1)$ is required.

Both alternatives being impossible, the homotopy family has a
uniform $L^\infty$ bound for the full range $1<p\le p_n^*$.
For the lower bound, $p_t\ge p_*$ gives
$0<f_t(s)/s\le s^{p_*-1}$ for $0<s\le1$, uniformly in $t$.
Choose $0<\eta<1$ with $\eta^{p_*-1}<\lambda_1$.
If a positive solution had $\|u\|_\infty\le\eta$, testing
against $\phi_1>0$ would give
$\lambda_1\int u\phi_1=\int f_t(u)\phi_1
\le\eta^{p_*-1}\int u\phi_1$, a contradiction.
Hence $\eta\le\|u\|_\infty\le M_*$, with $M_*$ independent
of $t$. On $0\le s\le M_*$,
\eqref{eq:inhibitedElasticityBounds} implies
$0\le f_t'(s)\le p\max_{t}f_t(M_*)/M_*$;
the right-hand side is finite by joint continuity, and the
one-sided derivative at zero is zero. Therefore
Lemma~\ref{lem:parameterCompactness} applies to the entire
homotopy family: for each fixed $\alpha'<1$ the positive solutions
form a compact set in $[0,1]\times C^{2,\alpha'}(\overline\Omega)$,
with uniform Hopf, gradient, and tangential-Hessian bounds in a
common boundary collar.

\emph{Continuation.}
Consider the completely continuous fixed-point map
\[
T(t,u)=(-\Lap)^{-1}f_t(u_+)
\qquad\text{on }X=C_0(\overline\Omega).
\]
The upper and lower bounds just obtained provide a common bounded
annulus containing every nonzero positive fixed point and no fixed
points on its boundary. At $t=0$, the near-linear positive solution
is unique and nondegenerate. Its Leray--Schauder degree is $\pm1$.
The map is jointly continuous and compact on bounded sets,
and no fixed point lies on the boundary of the annulus for any
$t\in[0,1]$. At $t=0$ the fixed-point set inside the annulus
is the singleton comprising the unique near-linear solution;
its nondegeneracy gives degree $\pm1$. Thus the hypotheses of Mawhin's continuation theorem
\cite[Theorem~1]{Mawhin1997} hold: complete continuity,
no fixed points on the lateral boundary, a singleton initial
fixed-point set, and nonzero degree. This yields a compact connected
set $\mathscr C_{\mathrm{inh}}$ of positive solutions whose
parameter projection is $[0,1]$. No convexity of the annulus
is needed.

\emph{A common convex boundary collar.}
Write $H_t(u)=\int_u^\infty f_t(s)^{-1}ds$ and
$\Phi_t(u)=H_t(u)^{1/3}$. Integrating the lower bound in
\eqref{eq:inhibitedElasticityBounds} gives
\begin{equation}\label{eq:inhibitedProduct}
f_t(u)H_t(u)\le\frac{u}{\alpha-1},\qquad
\frac{\Phi_t''(u)}{-\Phi_t'(u)}
=\frac{\Gamma_t(\Phi_t(u))}{3f_t(u)H_t(u)}
\ge\frac{2(n-1)(\alpha-1)}{3u}.
\end{equation}
The first estimate also implies finiteness and uniform decay of
the tails $H_t(u)$ at infinity. Write $L_t(u)=-\Phi_t'(u)>0$ and decompose any unit vector
$X=\tau+sN$, with $N=\grad u/|\grad u|$ and
$\tau\perp\grad u$. By Lemma~\ref{lem:parameterCompactness},
there are constants $a_0,b_0,C_0,c_1>0$ and a collar width
$\delta_0>0$, independent of $t$ and the positive solution, with
$|\grad u|\ge a_0$, $|\Hess u|\le C_0$,
$\Hess u(\tau,\tau)\le-b_0|\tau|^2$ and
$u(x)\le c_1\dist(x,\partial\Omega)$ there. Consequently,
using Young's inequality on the mixed Hessian term,
\[
\begin{aligned}
\Hess(\Phi_t(u))(X,X)
&=L_t[-\Hess u(X,X)+(\Phi_t''/L_t)du(X)^2]\\
&\ge L_t\left\{\frac{b_0}{2}|\tau|^2+
\left[a_0^2\frac{\Phi_t''}{L_t}-C_0-
\frac{2C_0^2}{b_0}\right]s^2\right\}.
\end{aligned}
\]
By \eqref{eq:inhibitedProduct}, the bracket is at least
$2(n-1)(\alpha-1)a_0^2/(3u)-C_0-2C_0^2/b_0$.
Choose $0<\delta\le\delta_0$ so that this expression
is positive for $0<u\le c_1\delta$.
This yields $\Hess(\Phi_t(u))>0$ whenever
$\dist(x,\partial\Omega)<\delta$, uniformly on
$\mathscr C_{\mathrm{inh}}$.

\emph{Propagation of strict convexity.}
The strict-convexity locus in $\mathscr C_{\mathrm{inh}}$ is relatively
open by the common boundary collar and interior $C^2$ convergence.
It is relatively closed by the same convergence, followed by
Theorem~\ref{thm:IntrinsicSchur} and
Proposition~\ref{prop:inhibitedStructure}. At $t=0$ it contains
the near-linear strictly power-convex solution, because
$H_0(u)^{1/3}=(p_*-1)^{-1/3}u^{-(p_*-1)/3}$.
Connectedness therefore makes the entire continuum strictly
intrinsically convex. Taking $t=1$ gives
\eqref{eq:inhibitedConvIntro} by \eqref{eq:inhibitedH},
with the asserted geometric consequences.
\end{proof}

\clearpage
\appendix
\section{Uniform compactness and domain-dependence estimates}
\label{app:technical}

This appendix gives the compactness estimates for parameter-dependent Dirichlet problems, the boundary estimates used in blow-up arguments, and the regularity needed for smooth domain deformations. The dependence of constants is stated in each result.

\subsection{Parameter-uniform boundary estimates}

\begin{lemma}
\label{lem:parameterCompactness}
Let $\Omega\subset\mathbb S^n$ be a fixed smooth uniformly
geodesically convex domain. Let $f_t:[0,\infty)\to[0,\infty)$,
$t\in[0,1]$, satisfy $f_t(0)=0$, $f_t(s)>0$ for $s>0$, and
$(t,s)\mapsto f_t(s)$ be continuous. Assume that for each $M>0$
there is $L_M<\infty$ with
\[
 |f_t(s)-f_t(r)|\le L_M|s-r|
 \quad(0\le r,s\le M,\;0\le t\le1).
\]
Suppose every positive classical Dirichlet solution of
$-\Delta_g u=f_t(u)$ satisfies
\begin{equation}\label{eq:parameterBounds}
 0<m_*\le\|u\|_{\infty}\le M_*<\infty
\end{equation}
with the same constants for all $t\in[0,1]$.
Then the set $\mathscr P$ of all such pairs $(t,u)$ is compact
in $[0,1]\times C^{2,\alpha'}(\overline\Omega)$ for every fixed
$0<\alpha'<1$. Moreover there exist positive constants
$a_0,b_0,C_0,c_1,\delta_0$, depending only on $\Omega$, the
bounds in \eqref{eq:parameterBounds}, $L_{M_*}$, and the modulus
of continuity of $(t,s)\mapsto f_t(s)$ on
$[0,1]\times[0,M_*]$, such that all these solutions satisfy
\begin{align*}
 |\nabla_g u|&\ge a_0, &|\nabla_g^2u|&\le C_0,\\
 \nabla_g^2u(\tau,\tau)&\le-b_0|\tau|^2
 &\text{for }\tau\perp\nabla_g u
\end{align*}
when $\dist(x,\partial\Omega)<\delta_0$, and
\[
 0\le u(x)\le c_1\dist(x,\partial\Omega)
 \quad\text{for }\dist(x,\partial\Omega)<\delta_0.
\]
In particular, $-\partial_\nu u\ge2a_0$ after possibly
reducing $a_0$.
\end{lemma}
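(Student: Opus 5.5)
The plan is to argue in two stages: first prove compactness of $\mathscr P$ through a uniform elliptic bootstrap, and then derive the boundary-collar constants from that compactness by contradiction, in the same way as in Lemma~\ref{lem:collar}.

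\emph{Compactness.} Let $(t,u)\in\mathscr P$. The bounds \eqref{eq:parameterBounds} together with $f_t(0)=0$ and the Lipschitz constant $L_{M_*}$ give $0\le f_t(u)\le L_{M_*}M_*$. The global Dirichlet $W^{2,q}$ estimate, with $q>n$ large, then bounds $u$ uniformly in $C^{1,\gamma}(\overline\Omega)$ for some $\gamma$ with $\alpha'<\gamma<1$. Since $f_t$ is $L_{M_*}$-Lipschitz on $[0,M_*]$, we get $\|f_t(u)\|_{C^{0,\gamma}}\le C$, and the global Schauder estimate yields a uniform $C^{2,\gamma}(\overline\Omega)$ bound.

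Now take a sequence $(t_j,u_j)$ in $\mathscr P$. Pass to a subsequence with $t_j\to t$ and $u_j\to u$ in $C^{2,\alpha'}$. The functions $f_{t_j}(u_j)$ converge uniformly to $f_t(u)$, by uniform continuity of $(t,s)\mapsto f_t(s)$ on $[0,1]\times[0,M_*]$ together with the Lipschitz bound. Hence the limit satisfies $-\Delta_g u=f_t(u)$, $u=0$ on $\partial\Omega$, $u\ge0$, and $\|u\|_\infty\ge m_*>0$. Because $f_t\ge0$, $u$ is superharmonic and nontrivial, so the strong maximum principle gives $u>0$ in $\Omega$. Thus $(t,u)\in\mathscr P$, and $\mathscr P$ is compact.

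\emph{Boundary constants.} Hopf's lemma applies to each member of $\mathscr P$, since $-\Delta_g u\ge0$ and $u>0$, giving $-\partial_\nu u>0$ on $\partial\Omega$. Compactness of $\mathscr P$ in $C^1$ and of $\partial\Omega$ then yields a uniform bound $-\partial_\nu u\ge 2a_0$. If this failed, a limit point would violate Hopf's lemma; this is exactly the argument for \eqref{eq:uniformHopf}. By uniform $C^2$ bounds, $|\nabla_g u|\ge a_0$ holds on a collar of uniform width, and we set $C_0:=\sup_{\mathscr P}\|u\|_{C^2}$.

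For the tangential estimate, we first use \eqref{eq:boundaryHessIdentity} on $\partial\Omega$, which gives $\nabla_g^2u(\tau,\tau)\le-2a_0\kappa_0|\tau|^2$. We then argue by contradiction as in the proof of \eqref{eq:levelTangential}: take a sequence in $\mathscr P$, points $x_j\to x_*\in\partial\Omega$, and unit vectors $\tau_j\perp\nabla_g u_j(x_j)$ along which the tangential Hessian tends to a nonnegative value. The $C^2$ limit contradicts the boundary identity, so for small $\delta_0$ we obtain the bound with $b_0=a_0\kappa_0$. Finally, the linear bound $u(x)\le c_1\dist(x,\partial\Omega)$ with $c_1=C_0$ follows from the gradient bound by integrating along the minimizing normal geodesic from the nearest boundary point.

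\emph{Main obstacle: dependence of the constants.} The delicate point is that the constants should depend only on the listed data, not on the particular family. I would handle this by noting that every contradiction argument above uses only the compactness furnished by these data. Concretely, the same bootstrap applies to any family of reactions sharing $M_*$, $m_*$, $L_{M_*}$ and the modulus of continuity, and these are equicontinuous; this is enough for the statement as used in the paper.
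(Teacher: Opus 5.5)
Your proposal is correct and follows the paper's proof essentially step for step. Both arguments use a global $W^{2,q}$--Morrey--Schauder bootstrap driven by $L_{M_*}$, then closedness via uniform convergence of $f_{t_j}(u_j)$, the lower bound $m_*$ and the strong maximum principle, then Hopf's lemma plus compactness, and finally the boundary identity $\nabla_g^2u(\tau,\tau)=\partial_\nu u\,\mathrm{II}(\tau,\tau)$ with a contradiction argument to obtain the collar constants. Your remark on the dependence of the constants is more explicit than the paper, which simply asserts this uniformity: you rerun the contradiction arguments across all reaction families sharing the data, and note that the limiting reaction is still nonnegative, so Hopf's lemma applies to the limit.
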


\begin{proof}
The bounds $\|u\|_\infty\le M_*$ and continuity of $f_t$
on the compact parameter--amplitude rectangle give a common
$L^\infty$ bound for the right-hand sides. On the fixed smooth
domain, the global Dirichlet $W^{2,q}$ estimate gives, for every
finite $q>n$,
\[
 \|u\|_{W^{2,q}(\Omega)}\le C_q\bigl(\|f_t(u)\|_{L^q}
 +\|u\|_{L^q}\bigr)\le C'_q.
\]
By Morrey embedding, $u$ is uniformly bounded in $C^{1,\gamma}$
with $\gamma=1-n/q$. The common Lipschitz bound for $f_t$
then gives
$[f_t(u)]_{C^{0,\gamma}}\le L_{M_*}[u]_{C^{0,\gamma}}$.
Global Schauder estimates yield a common $C^{2,\gamma}$ bound.
For any fixed $\alpha'<1$, choose $q$ so that
$\alpha'<\gamma<1$, and use compact embedding in $C^{2,\alpha'}$.
If $(t_j,u_j)$ is a sequence in the solution family, a subsequence
converges to $(t,u)$ in this topology. Joint continuity of the
reaction gives $-\Delta_g u=f_t(u)$ with zero boundary trace.
The lower bound $m_*$ prevents $u\equiv0$; since $f_t\ge0$,
the strong maximum principle makes $u>0$ in $\Omega$. Thus the
family is closed and compact in the stated topology.

Hopf's boundary lemma gives $\partial_\nu u<0$ for each pair.
The normal derivative depends continuously on the pair and on
the boundary point; compactness of
$\mathscr P\times\partial\Omega$ produces $a_0>0$ with
$-\partial_\nu u\ge2a_0$ throughout. Uniform $C^2$
compactness supplies $|\nabla_g^2u|\le C_0$. On the boundary,
for every tangent vector $\tau$ the tangential Hessian identity is
\[
 \nabla_g^2u(\tau,\tau)
 =\partial_\nu u\,\mathrm{II}_{\partial\Omega}(\tau,\tau)
 \le-2a_0\kappa_0|\tau|^2.
\]
A common tubular neighborhood exists by smoothness of the fixed
domain. If the asserted negative bound on vectors perpendicular
to $\nabla_g u$ failed arbitrarily close to the boundary, one
could choose converging solutions, points approaching the boundary,
and unit tangent vectors; the limiting vector would be tangent to
$\partial\Omega$ because $\nabla_g u$ is normal there, contradicting
the displayed boundary estimate. Hence after shrinking the collar,
$|\nabla_g u|\ge a_0$ and
$\nabla_g^2u(\tau,\tau)\le-b_0|\tau|^2$ for all
$\tau\perp\nabla_g u$. Finally the common $C^1$ bound and the
zero boundary trace, integrated along inward normal geodesics,
give $u(x)\le c_1\dist(x,\partial\Omega)$.
All constants are independent of $t$ and of the particular positive
solution, as asserted.
\end{proof}

\subsection{Blow-up limits near the boundary}

\begin{lemma}\label{lem:quantBoundaryBlowup}
Let $\Omega\subset \mathbb S^n$ have smooth boundary. Suppose that
$\rho_j\downarrow0$, that $x_j\in\Omega$ and
$d_j=\dist(x_j,\partial\Omega)$ satisfy
$0\le d_j/\rho_j\le D$, and that $W_j\in C^2(\Omega)\cap C(\overline\Omega)$
solve
\[
-\Delta_g W_j=\rho_j^{-2}h_j(W_j)\quad\text{in }\Omega,
\qquad W_j=0\quad\text{on }\partial\Omega,
\]
where
\[
0\le W_j\le1,\qquad W_j(x_j)=1,\qquad h_j(0)=0,
\]
and $h_j:[0,1]\to[0,\infty)$ obey
\[
\|h_j\|_{L^\infty([0,1])}+\operatorname{Lip}_{[0,1]}h_j\le L
\]
with $L$ independent of $j$. After a subsequence, the boundary
normal-coordinate rescalings converge in
$C^{2,\alpha}_{\mathrm{loc}}(\overline{\mathbb R^n_+})$ for each fixed
$\alpha<1$, after possibly reducing the exponent, to a function $W$ with
\[
-\Delta W=h(W)\quad\text{in }\mathbb R^n_+,\qquad
W=0\quad\text{on }\partial\mathbb R^n_+,\qquad 0\le W\le1,
\]
where $h_j\to h$ uniformly on $[0,1]$ along the same subsequence.
Moreover, there is a constant $c=c(\Omega,n,D,L)>0$ such that
\begin{equation}\label{eq:quantHeight}
\frac{d_j}{\rho_j}\ge c
\end{equation}
for all sufficiently large $j$. Thus $W$ attains its maximum value one
at an interior point and is strictly positive in the half-space.
Here $C^{2,\alpha}_{\mathrm{loc}}$ up to the flat boundary means convergence
on each fixed closed half-ball of smaller radius.
\end{lemma}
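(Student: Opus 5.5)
We work in boundary Fermi coordinates centred at a nearest boundary point $z_j$ of $x_j$. We rescale by $\rho_j$, setting
\[
U_j(y',y_n):=W_j\bigl(\Psi_j(\rho_jy',\rho_jy_n)\bigr),
\]
where $\Psi_j$ is the Fermi chart used in Proposition~\ref{prop:Linfty}. Since $\partial\Omega$ is smooth and compact, these charts are uniformly controlled. On every half-ball $B_R^+$ the rescaled metrics $g_j$ converge smoothly to the Euclidean metric, with uniform bounds on the coefficients and their derivatives. The rescaled equation reads
\[
-\Delta_{g_j}U_j=h_j(U_j)\ \text{in } B_R^+,\qquad U_j=0\ \text{on } \{y_n=0\},
\]
and $0\le U_j\le1$. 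The right-hand side is bounded by $L$ in $L^\infty$.

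\textbf{Step 1: uniform estimates up to the flat boundary.} Fix $q>n$. The local boundary $W^{2,q}$ estimate on $B_{2R}^+$, with constants uniform in $j$ because the metrics are uniformly elliptic with bounded coefficients, gives $\|U_j\|_{W^{2,q}(B_R^+)}\le C(R,L)$. Morrey's embedding then gives a uniform $C^{1,\gamma}(\overline{B_R^+})$ bound. Because $h_j$ is $L$-Lipschitz, $h_j(U_j)$ is bounded in $C^{0,\gamma}$, and the boundary Schauder estimate yields a uniform $C^{2,\gamma}$ bound on smaller half-balls.

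By Arzel\`a--Ascoli, the $h_j$ are equi-Lipschitz and equibounded on $[0,1]$, so a subsequence converges uniformly to some $h$. A diagonal argument over $R\to\infty$ gives $C^{2,\alpha}_{\rm loc}(\overline{\mathbb R^n_+})$ convergence of $U_j$ to $W$ for any $\alpha<\gamma$. We may choose $q$ large so that $\gamma$ is close to $1$; this is the sense of ``possibly reducing the exponent''. Passing to the limit in the equation is immediate from uniform convergence of $h_j(U_j)\to h(W)$. The limit satisfies $0\le W\le1$ and $W=0$ on $\{y_n=0\}$.

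\textbf{Step 2: the height bound \eqref{eq:quantHeight}.} This is the key quantitative point. The rescaled maximum point is $y_j=(0,d_j/\rho_j)$, with $U_j(y_j)=1$ and $U_j(0)=0$. The uniform $C^1$ bound from Step~1 on $B_{D+1}^+$ gives $|\nabla U_j|\le C_1=C_1(\Omega,n,D,L)$ there, with a constant that does not depend on the subsequence. The mean value theorem along the vertical segment therefore gives $1=U_j(y_j)-U_j(0)\le C_1\,d_j/\rho_j$, so $d_j/\rho_j\ge c:=1/C_1$ for all large $j$.

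The main obstacle is making $C_1$ depend only on $(\Omega,n,D,L)$. This requires the constants of the local boundary $W^{2,q}$ estimate in Fermi coordinates to be uniform in the base point $z_j$ and the scale $\rho_j\le\rho_0$. That uniformity holds because the rescaled metrics lie in a compact set of $C^k$ metrics on $B_{2(D+1)}^+$, determined only by the geometry of $\partial\Omega$ and a fixed tubular radius. I would state this explicitly rather than argue by contradiction, so that $c$ is a genuine constant and not just subsequential.

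\textbf{Step 3: positivity of the limit.} Passing to the limit, $d_j/\rho_j\to a\in[c,D]$ along a further subsequence, and $W(ae_n)=1$ by locally uniform convergence. So $W$ attains its maximum at an interior point. Since $h\ge0$, we have $-\Delta W\ge0$ and $W\ge0$, and $W\not\equiv0$. The strong maximum principle for superharmonic functions then gives $W>0$ in $\mathbb R^n_+$.
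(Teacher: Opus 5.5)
Your proposal is correct and follows essentially the same route as the paper. Both use Fermi-coordinate rescaling at a nearest boundary point, local boundary $W^{2,q}$, Morrey and Schauder estimates with a diagonal extraction (plus Arzel\`a--Ascoli for $h_j$), the height bound obtained by integrating the uniform $C^1$ bound along the normal segment from $(0,0)$ to $(0,d_j/\rho_j)$, and finally the strong maximum principle; your remark that the $C^1$ constant depends only on $(\Omega,n,D,L)$ is exactly what the paper uses when it calls $C_R$ independent of $j$.
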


\begin{proof}
Choose a nearest boundary point $z_j$. Smoothness and compactness of
$\partial\Omega$ provide a tubular radius $r_*>0$ and a finite atlas in
which all inward Fermi maps have uniformly bounded derivatives of every
fixed order. In a chart centered at $z_j$ write
\[
\mathcal F_j(\xi',s)
=\exp_{\Theta_j(\xi')}\bigl(-s\nu(\Theta_j(\xi'))\bigr),
\qquad s\ge0.
\]
The footpoint property implies $x_j=\mathcal F_j(0,d_j)$.
Define, wherever the chart is available,
\[
\widetilde W_j(y)=W_j\bigl(\mathcal F_j(\rho_jy',\rho_jy_n)\bigr),
\qquad
\widetilde g_j=\rho_j^{-2}(\mathcal F_j\circ(\rho_j\,\cdot))^*g.
\]
For each fixed $R>0$, the equation is defined on
$B_{2R}^+=B_{2R}\cap\{y_n>0\}$ for large $j$, and
\begin{equation}\label{eq:quantScaledEquation}
-\Delta_{\widetilde g_j}\widetilde W_j
=h_j(\widetilde W_j),\qquad
\widetilde W_j|_{B_{2R}\cap\{y_n=0\}}=0.
\end{equation}
The factor $\rho_j^{-2}$ in the rescaled metric gives precisely the equation in \eqref{eq:quantScaledEquation}. The coefficients of
$\widetilde g_j$ have ellipticity constants bounded above and below,
and are bounded in every $C^k(B_{2R}^+)$ independently of $j$;
they converge in those norms to the Euclidean coefficients.

Fix $R$ and $q>n$. Local boundary $W^{2,q}$ estimates on the nested
half-balls (using only zero data on their flat portions) imply
\begin{equation}\label{eq:quantW2q}
\|\widetilde W_j\|_{W^{2,q}(B_R^+)}
\le C_{R,q}\bigl(
\|\widetilde W_j\|_{L^q(B_{2R}^+)}
+\|h_j(\widetilde W_j)\|_{L^q(B_{2R}^+)}\bigr)
\le C'_{R,q}.
\end{equation}
The boundary Morrey embedding gives a uniform
$C^{1,\gamma}(\overline{B_R^+})$ bound, where
$\gamma=1-n/q>0$. Since $h_j$ has a common Lipschitz constant,
\[
[h_j(\widetilde W_j)]_{C^{0,\gamma}(\overline{B_R^+})}
\le L[\widetilde W_j]_{C^{0,\gamma}(\overline{B_R^+})}.
\]
Applying local boundary Schauder estimates to
\eqref{eq:quantScaledEquation} on a smaller half-ball gives
\[
\|\widetilde W_j\|_{C^{2,\gamma}(\overline{B_{R/2}^+})}
\le C''_{R,q}.
\]
Because $R$ and $q$ may be chosen arbitrarily large, a diagonal
subsequence converges in $C^{2,\alpha}$ on every fixed smaller
closed half-ball for any prescribed $\alpha<1$. Arzel\`a--Ascoli
applied to the equibounded and equi-Lipschitz $h_j$ also gives
$h_j\to h$ uniformly on $[0,1]$. Passing to the equation yields the
stated limit, including its zero boundary trace.

For the quantitative separation, choose $R>D+2$. The already obtained
boundary $C^1$ bound gives a constant $C_R$ independent of $j$ with
$|\partial_{y_n}\widetilde W_j|\le C_R$ on the normal segment
$\{(0,s):0\le s\le D+1\}$. Therefore
\[
1=\widetilde W_j(0,d_j/\rho_j)
-\widetilde W_j(0,0)
=\int_0^{d_j/\rho_j}\partial_{y_n}\widetilde W_j(0,s)\,ds
\le C_R\frac{d_j}{\rho_j},
\]
which proves \eqref{eq:quantHeight}. After passing to a further
subsequence, $d_j/\rho_j\to a\in[c,D]$ and $W(ae_n)=1$.
Since $-\Delta W=h(W)\ge0$ and $W\ge0$, the strong maximum principle
for the Laplacian gives $W>0$ on the connected half-space.
\end{proof}

\begin{remark}\label{rem:boundaryScalingApplications}
For the near-linear normalization in Lemma~\ref{lem:maxnorm}, put
$\rho_j=\Lambda_j^{-1/2}$ and
$h_j(s)=s^{p_j}$. For Proposition~\ref{prop:Linfty}, take
$\rho_j=M_j^{-(p_j-1)/2}$ and $h_j(s)=s^{p_j}$.
On $0\le s\le1$ these functions have a common Lipschitz constant
whenever the exponents remain in a fixed compact subset of $[1,\infty)$.
For the inhibited-reaction homotopy in the proof of
Theorem~\ref{thm:inhibited}, take
\[
\rho_j^2=\frac{M_j}{f_{t_j}(M_j)},\qquad
h_j(s)=\frac{f_{t_j}(M_js)}{f_{t_j}(M_j)}.
\]
The effective-exponent estimate
\eqref{eq:inhibitedElasticityBounds} gives
$0\le h_j(s)\le1$ and $\operatorname{Lip}h_j\le p$.
Thus the same quantitative lemma covers the two Lane--Emden
normalizations and the inhibited-reaction blow-up.
The rescaled equation and the $C^2$ limit are obtained without assuming
that the distance of the maximum point from the boundary is bounded below
\emph{before} rescaling: that lower bound is a consequence of
\eqref{eq:quantHeight}.
\end{remark}

\subsection{Smooth domain variation}

\begin{lemma}\label{lem:quantDomainSmoothness}
In the construction of Lemma~\ref{lem:domainPath}, there are constants
$R_*,q_*,Q_*,\kappa_*>0$, independent of $t\in[0,1]$, such that
\[
D_t\subset B_{R_*}(0),\qquad
q_*g_{\mathbb S^{n-1}}\le Q_{h_t}\le Q_*g_{\mathbb S^{n-1}},
\qquad \II_{\partial\Omega_t}\ge\kappa_*g_{\partial\Omega_t}.
\]
The boundaries form a smooth family of embeddings. For every
$t_0\in[0,1]$ they admit smooth, parameter-dependent diffeomorphisms
$\Psi_t:\overline\Omega_{t_0}\to\overline\Omega_t$ for $t$ close
to $t_0$, and
\begin{equation}\label{eq:quantPullbackConvergence}
\|\Psi_t^*g-\Psi_{t_0}^*g\|_{C^{k,\alpha}(\overline\Omega_{t_0})}
\longrightarrow0\quad\text{for each fixed }k\ge0,\ 0<\alpha<1.
\end{equation}
Consequently the torsion functions $u_t\circ\Psi_t$ converge in
$C^{2,\alpha}(\overline\Omega_{t_0})$ for each fixed $\alpha<1$.
\end{lemma}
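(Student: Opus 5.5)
The plan is to reduce everything to uniformly controlled graph parametrizations of the boundaries, in the central-projection chart of Lemma~\ref{lem:domainPath}. The curvature and inradius bounds are already in that proof. The bound $D_t\subset B_{R_*}(0)$ comes from $K_t\subset\overline{B_{R_*}(0)}$. The two-sided bound on $Q_{h_t}$ is \eqref{eq:torsionSupportUniform} with $Q_*:=q^*$. The lower bound $\II_{\partial\Omega_t}\ge\kappa_*g_{\partial\Omega_t}$ is the final display of that proof. So the real content is the construction of $\Psi_t$ and the convergence \eqref{eq:quantPullbackConvergence}, followed by the transfer to the torsion functions.

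For the diffeomorphisms I would work on the Euclidean side and then conjugate by $P$. Since $h_t=(1-t)\rho+th$ depends affinely on $t$ and $Q_{h_t}\ge q_*$, the inverse Gauss maps $Y_t(\theta)=h_t\theta+\nabla h_t$ form a smooth family of embeddings $\mathbb S^{n-1}\to\R^n$, jointly smooth in $(t,\theta)$. Every $D_t$ contains the common ball $B_{r_*/2}(t_0y_*)$ for $t$ near $t_0$, because the centers $ty_*$ move continuously. Hence each $D_t$ is star-shaped with respect to $c_0:=t_0y_*$, with a uniform transversality bound $\langle Y_t-c_0,N_t\rangle\ge r_*/2$. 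I would therefore write $\partial D_t$ as a radial graph $\{c_0+\varrho_t(\omega)\omega\}$. Here $\varrho_t$ is smooth in $(t,\omega)$ by the implicit function theorem applied to the jointly smooth family $Y_t$, with the transversality bound ruling out degeneracy.

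Next, fix a cutoff $\chi$ equal to $0$ near $c_0$ and $1$ near $\partial D_{t_0}$. Define
\[
\widetilde\Psi_t\bigl(c_0+r\omega\bigr)
= c_0+r\Bigl(1+\chi(r\omega)\bigl(\tfrac{\varrho_t(\omega)}{\varrho_{t_0}(\omega)}-1\bigr)\Bigr)\omega ,
\]
and set $\Psi_t=P\circ\widetilde\Psi_t\circ P^{-1}$. Then $\widetilde\Psi_{t_0}=\mathrm{id}$, $\widetilde\Psi_t$ maps $\partial D_{t_0}$ onto $\partial D_t$, and $\widetilde\Psi_t\to\mathrm{id}$ in every $C^k$ norm as $t\to t_0$. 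For $t$ close to $t_0$ it is therefore a diffeomorphism of the closures. Because $P$ is smooth on $\overline{B_{R_*}(0)}$, the metric $\Psi_t^*g$ depends smoothly on $t$ in every $C^{k}(\overline\Omega_{t_0})$ norm, which gives \eqref{eq:quantPullbackConvergence}.

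For the torsion functions, set $w_t:=u_t\circ\Psi_t$. It solves $-\Delta_{g_t}w_t=1$ on the fixed domain $\Omega_{t_0}$ with zero boundary data, where $g_t:=\Psi_t^*g$. I would write
\[
-\Delta_{g_{t_0}}(w_t-w_{t_0})=(\Delta_{g_t}-\Delta_{g_{t_0}})w_t .
\]
Global Schauder theory for $\Delta_{g_t}$ has constants that depend only on $C^{1,\alpha}$ norms of the coefficients, and these are uniform by \eqref{eq:quantPullbackConvergence}; this gives a uniform bound $\|w_t\|_{C^{2,\alpha}}\le C$. The coefficient difference tends to $0$ in $C^{0,\alpha}$, so the right-hand side tends to $0$ in $C^{0,\alpha}$. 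The Schauder estimate for $\Delta_{g_{t_0}}$ then gives $w_t\to w_{t_0}$ in $C^{2,\alpha}$ for every $\alpha<1$.

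The main obstacle is the smoothness of $\varrho_t$ uniformly up to the boundary. This requires the quantitative transversality uniformly in $t$ near $t_0$, and the transversality comes from the uniform inradius \eqref{eq:torsionInradiusUniform} combined with convexity. If the radial-graph route became awkward, I would fall back on a normal-graph parametrization over $\partial D_{t_0}$, which is valid because the boundaries are $C^k$-close.
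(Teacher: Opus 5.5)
Your proposal is correct. It reaches the conclusion by a somewhat different route from the paper's.

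\textbf{Uniform constants.} You cite the proof of Lemma~\ref{lem:domainPath} for $R_*$, $q_*$, $Q_*$, $\kappa_*$. This is legitimate, since that proof does not depend on the present lemma. The paper instead re-derives these bounds, including the transfer formula \eqref{eq:quantCentralII}, for completeness.

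\textbf{Construction of $\Psi_t$.} The paper identifies $\partial\Omega_t$, for $t$ near $t_0$, with a normal graph $s=a_t(z)$ over $\partial\Omega_{t_0}$ in a tubular neighborhood. It then sets $s\mapsto s+\chi(s)a_t(z)$ in signed normal coordinates, with the cutoff localizing the deformation near the boundary. You instead use convexity and the uniform inradius \eqref{eq:torsionInradiusUniform} to get $h_t(\theta)-\langle c_0,\theta\rangle\ge r_*/2$ about the single center $c_0=t_0y_*$. Consequently every nearby $\partial D_t$ is a global radial graph with jointly smooth radius $\varrho_t$, and your cutoff only removes the polar singularity at $c_0$.

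\textbf{What each construction buys.} In the convex setting yours is more explicit. Joint smoothness of $\varrho_t$ follows from the inverse function theorem on $\mathbb S^{n-1}$. Bijectivity of $\widetilde\Psi_t$ onto $\overline{D_t}$ follows from monotonicity along rays. The paper treats the normal-graph representation with $\|a_t\|_{C^{k,\alpha}}\to0$ as a standard tubular-neighborhood fact. In exchange, the paper's construction uses no convexity and works for any smooth family of embedded boundaries.

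\textbf{Torsion step.} The paper inverts $L_t=-\Delta_{\Psi_t^*g}$ by a Neumann series around the Dirichlet isomorphism $L_{t_0}$ from $C^{2,\alpha}$ functions with zero trace onto $C^{0,\alpha}$. This gives $w_t=L_t^{-1}1\to w_{t_0}$ directly. Your difference identity also needs a uniform $C^{2,\alpha}$ bound on $w_t$. The global Schauder estimate carries a $\|w_t\|_{C^0}$ term, so you also need a uniform sup bound. The maximum principle supplies it (e.g.\ \cite{GilbargTrudinger}), but you should state it.

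\textbf{Beyond the statement.} The paper also uses a finite subcover of $[0,1]$ to obtain uniform higher-order bounds and a uniform Hopf constant. These are used later in Lemma~\ref{lem:torsionDomainDependence}, and your local construction yields them in the same way.
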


\begin{proof}
Write $h_t=(1-t)\rho+t h$. The inverse Gauss map of the smooth
strictly convex body $D$ is smooth and has positive-definite
radius-of-curvature tensor
$Q_h=\nabla^2_{\mathbb S^{n-1}}h+h g_{\mathbb S^{n-1}}$.
By compactness of $\mathbb S^{n-1}$, there are $0<q\le Q$ such that
$qg\le Q_h\le Qg$. Consequently
\[
\min\{\rho,q\}\,g\le
Q_{h_t}=(1-t)\rho g+tQ_h\le\max\{\rho,Q\}\,g
\]
for all $t$. In support coordinates, the derivative of the map
\[
Y_t(\theta)=h_t(\theta)\theta+\nabla_{\mathbb S^{n-1}}h_t(\theta)
\]
on the tangent space is exactly $Q_{h_t}$, so the Euclidean
principal curvatures of $\partial D_t$ are the eigenvalues of
$Q_{h_t}^{-1}$. Thus all are bounded below by $Q_*^{-1}$ and above
by $q_*^{-1}$, and $Y_t$ is a smooth embedded hypersurface with
parameter-uniform regularity. Since the support functions and their
first derivatives are uniformly bounded, $|Y_t(\theta)|\le R_*$
for some fixed $R_*$, whence $D_t\subset B_{R_*}$.

For completeness we verify the spherical curvature transfer. Put
$W=(1+|y|^2)^{1/2}$ and
$L=(1+(y\cdot N)^2)^{1/2}$, where $N$ is the Euclidean outward normal.
The spherical outward normal to the projected boundary is
$\nu=(N,-y\cdot N)/L$. For any Euclidean boundary tangent vectors
$\tau,\sigma$,
\[
dP\,\sigma=\frac{(\sigma,0)}W
-\frac{(y,1)(y\cdot\sigma)}{W^3}.
\]
Differentiating $\nu$ in the direction $\tau$ and pairing with
$dP\,\sigma$ eliminates the derivative of $L^{-1}$, because
$\langle\nu,dP\,\sigma\rangle=0$, as well as all terms paired
with $(y,1)$, because
$\langle(D_\tau N,-y\cdot D_\tau N),(y,1)\rangle=0$.
Since $\tau\cdot N=0$, the result is
\begin{equation}\label{eq:quantCentralII}
\II_{\mathbb S^n}(dP\,\tau,dP\,\sigma)
=\frac{\langle D_\tau N,\sigma\rangle}{WL}
=\frac{\II_{\mathbb R^n}(\tau,\sigma)}{WL}.
\end{equation}
For $|y|\le R_*$, one has
$WL\le1+R_*^2$ and
$|dP\,\tau|_g^2\le|\tau|^2$. It follows that
\[
\II_{\mathbb S^n}(dP\,\tau,dP\,\tau)
\ge\frac{1}{Q_*(1+R_*^2)}|\tau|^2
\ge\frac{1}{Q_*(1+R_*^2)}|dP\,\tau|_g^2.
\]
Thus one can take $\kappa_*=[Q_*(1+R_*^2)]^{-1}$.
The domains remain geodesically convex because a Euclidean line
segment, under $P(y)=(y,1)/\sqrt{1+|y|^2}$, is mapped onto the
shorter great-circle segment joining its endpoints (up to a monotone
reparametrization). The common bound $|y|\le R_*$ also keeps the
whole family compactly contained in a single open hemisphere.

Fix $t_0$. The embeddings $P\circ Y_t$ vary smoothly in $(t,\theta)$,
and, since $P\circ Y_{t_0}$ is embedded, a tubular neighborhood of
$\partial\Omega_{t_0}$ identifies $\partial\Omega_t$, for nearby
$t$, with the normal graph of a smooth function $a_t$ over the
reference boundary, where
$\|a_t\|_{C^{k,\alpha}}\to0$ for every fixed $k,\alpha$.
In signed normal coordinates $(z,s)$ of the reference boundary choose
a fixed smooth cutoff $\chi(s)$ which equals one at $s=0$ and
vanishes outside a smaller tubular neighborhood. The map
\[
\Psi_t\bigl(\exp_z(s\nu_{t_0}(z))\bigr)
=\exp_z\bigl((s+\chi(s)a_t(z))\nu_{t_0}(z)\bigr)
\]
there, extended by the identity outside the tubular neighborhood,
is a smooth diffeomorphism for $|t-t_0|$ small, after choosing the
cutoff and parameter interval so that
$1+\partial_s(\chi(s)a_t(z))>0$. It sends the reference boundary
to the moving boundary and the reference interior to the moving
interior. It equals the identity at $t=t_0$ and satisfies
\eqref{eq:quantPullbackConvergence}.

On the fixed reference domain let
$w_t=u_t\circ\Psi_t$ and $L_t=-\Delta_{\Psi_t^*g}$.
The coefficients of $L_t$ converge in $C^{k,\alpha}$ by
\eqref{eq:quantPullbackConvergence}. On the Banach spaces
\[
X=\{w\in C^{2,\alpha}(\overline\Omega_{t_0}):w=0
\text{ on }\partial\Omega_{t_0}\},\quad
Y=C^{0,\alpha}(\overline\Omega_{t_0}),
\]
the Dirichlet operator $L_{t_0}:X\to Y$ is an isomorphism by
uniqueness (maximum principle), existence and global Schauder theory.
Furthermore $\|L_t-L_{t_0}\|_{X\to Y}\to0$. For $t$ sufficiently
close to $t_0$,
\[
L_t^{-1}=
\bigl[I+L_{t_0}^{-1}(L_t-L_{t_0})\bigr]^{-1}L_{t_0}^{-1}
\]
is a convergent Neumann-series inverse. It follows that
$w_t=L_t^{-1}1\to w_{t_0}$ in $C^{2,\alpha}$.
The finite subcover of $[0,1]$ provides uniform $C^{2,\alpha}$
bounds. Because the geometric data are smooth, bootstrapping supplies
$C^{k,\alpha}$ bounds for every fixed $k$, and in particular uniform
continuity of the boundary normal derivatives. Hopf's lemma yields
$-\partial_{\nu_t}u_t>0$ for every $t$ and every boundary point;
the parameterized boundary is compact, so its minimum is strictly
positive. The assertions follow.
\end{proof}

\begin{remark}\label{rem:quantHemisphere}
The preceding argument relies on compact hemisphere containment,
which is guaranteed by Lemma~\ref{lem:hemisphere} rather than assumed.
A useful detail in its cone proof is that if
$C=\{rx:r\ge0,\ x\in K\}$ and $x\in\mathbb S^n$, then
\[
x\in\partial C\quad\Longleftrightarrow\quad x\in\partial K.
\]
Indeed, on the annulus $1/2<|z|<2$ the radial map
$z\mapsto(|z|,z/|z|)$ is a homeomorphism, and there $C$ is the
product of a radial interval with $K$. Thus, if $C$ contained a line,
translation along that line would preserve $\partial C$ and
produce the great-circle arc in $\partial K$ used in the proof of
Lemma~\ref{lem:hemisphere}. A nonconstant great-circle arc in the
boundary has vanishing second fundamental form in its tangent
direction, contradicting strict convexity. This also justifies the
boundary identification at every nonzero ray of the cone.
\end{remark}

\subsection{Dependence of constants}

The preceding estimates enter the global arguments in different ways.
The local Hessian estimates (Proposition~\ref{prop:generalPowerSchur}
and Theorem~\ref{thm:IntrinsicSchur}) provide strictification only
after semidefiniteness is known. Near-$1$ uniqueness and eigenfunction
concavity initialize the property. Proposition~\ref{prop:continuum}
uses the compactness and the nonzero degree to obtain a connected
set; Proposition~\ref{prop:openclosed} propagates the Hessian property
along that set. In the inhibited reaction, the limiting convex reaction and
the half-space Liouville theorem of
Dupaigne--Sirakov--Souplet \cite{DupaigneSirakovSouplet2022}
and the whole-space supersolution alternative of
Armstrong--Sirakov \cite{ArmstrongSirakov2011} supply the
blow-up contradiction.
The common-collar constants for the inhibited homotopy follow
from Lemma~\ref{lem:parameterCompactness} and the intrinsic
ratio estimate, not from compactness of individual solutions alone. For torsion, the parameter is the domain deformation;
Lemma~\ref{lem:torsionDomainDependence} and
Lemma~\ref{lem:torsionCollar} supply uniformity on the entire path.

\begin{center}
\small
\begin{tabular}{@{}p{0.26\linewidth}p{0.47\linewidth}p{0.19\linewidth}@{}}
\hline
Family & Dependence of constants & Uniform in \\
\hline
Lane--Emden & $n,\Omega,p_-,p_+$ & $p$ and solution \\
Inhibited reaction & $n,\Omega,p_*,p,\lambda,\theta$ & $t$ and solution \\
Torsion deformation & fixed smooth domain path and $n$ & $t\in[0,1]$ \\
\hline
\end{tabular}
\end{center}

\section{Coordinate computations on the sphere}
\label{app:coordinates}

For completeness, we give coordinate derivations of the identities in Lemma~\ref{lem:sphereidentities}.  Let
\[
P(y)=\frac{(y,1)}{\sqrt{1+|y|^2}},
\qquad S=1+|y|^2.
\]
A calculation from the induced metric gives
\begin{align}
g_{ij}&=\frac{\delta_{ij}}S-\frac{y_iy_j}{S^2},
&g^{ij}&=S(\delta_{ij}+y_iy_j),
\label{eq:appmetric}\\
\Gamma^k_{ij}&=-\frac{\delta_i^k y_j+\delta_j^k y_i}{S}.
\label{eq:appChristoffel}
\end{align}
Consequently, for a smooth scalar function $f$,
\begin{align}
(\nabla_g^2f)_{ij}
&=\partial_{ij}f+\frac{y_j\partial_if+y_i\partial_jf}{S},
\label{eq:appHess}\\
\Delta_gf
&=S\left(\Delta_Ef+\sum_{i,j}y_iy_j\partial_{ij}f+2\sum_i y_i\partial_if\right),
\label{eq:appLap}\\
|\nabla_gf|^2
&=S\bigl(|Df|^2+(y\cdot Df)^2\bigr).
\label{eq:appGrad}
\end{align}
At $y=0$, write
\[
a_i=\partial_if(0),\quad
b_{ij}=\partial_{ij}f(0),\quad
c_{ijk}=\partial_{ijk}f(0),\quad
d_{ijkl}=\partial_{ijkl}f(0).
\]
If $T=\nabla_g^2f$, $Q=|\nabla_gf|^2$, and $H=\Delta_gf$, then differentiation gives
\begin{align}
(\nabla_kT)_{ij}(0)
&=c_{ijk}+\delta_{jk}a_i+\delta_{ik}a_j,
\label{eq:appFirst}\\
(\Delta^\nabla T)_{ij}(0)
&=\sum_kd_{ijkk}+(2n+6)b_{ij},
\label{eq:appRough}\\
(\nabla^2H)_{ij}(0)
&=\sum_kd_{ijkk}+6b_{ij}+2\delta_{ij}\sum_kb_{kk},
\label{eq:appHessLap}\\
(\nabla^2Q)_{ij}(0)
&=2\sum_kb_{ik}b_{jk}+2\sum_ka_kc_{ijk}
+2\delta_{ij}\sum_ka_k^2+2a_ia_j.
\label{eq:appHessQ}
\end{align}
Equations \eqref{eq:appFirst}--\eqref{eq:appHessQ} recover the covariant identities at the chart center.  Rotations move any point of the sphere to such a center, and the identities are tensorial.

As a radial check, on $\mathbb S^3$ the torsion function of a ball of radius $R<\pi/2$ is
\[
U(r)=\frac12\bigl(r\cot r-R\cot R\bigr),
\qquad U(0)=\frac12(1-R\cot R).
\]
Indeed,
\[
U'(r)=\frac12(\cot r-r\csc^2r),
\qquad
U''(r)=-\csc^2r+r\csc^2r\cot r,
\]
so $U''+2\cot r\,U'=-1$, $U(R)=0$, and $U'(r)/r\to-1/3$ at the center, in agreement with Lemma~\ref{lem:torsionBall}.

\end{document}